\newtheorem{lemma}{Lemma}[section]
\newtheorem{thm}[lemma]{Theorem}
\newtheorem{prop}[lemma]{Proposition}
\newtheorem{cor}[lemma]{Corollary}
\theoremstyle{definition}
\newtheorem{defn}[lemma]{Definition}
\newtheorem{quest}[lemma]{Question}
\newtheorem{ex}[lemma]{Example}
\theoremstyle{remark}
\newtheorem{rem}[lemma]{Remark} 
\newcommand{\Q}{\mathbb{Q}}
\newcommand{\Z}{\mathbb{Z}}
\newcommand{\spinc}{spin$^c$ }
\newcommand{\Spinc}{{\rm Spin}^c}
\newcommand{\HFp}{\mathrm{HF}^+}
\newcommand{\HFpred}{\mathrm{HF}^+_{\rm red}}
\newcommand{\Tp}{\mathcal{T}^+}
\newcommand{\ft}{\mathfrak{t}}
\newcommand{\fs}{\mathfrak{s}}
\newcommand{\Yt}{(Y,\ft)}
\newcommand{\unknot}{\mathcal{O}}
\newcommand{\eps}{\varepsilon}
\title{Dehn surgeries and rational homology balls}
\author{Paolo Aceto and Marco Golla}
\address{Alfr\'ed R\'enyi Institute of Mathematics, Budapest, Hungary}
\email{aceto.paolo@renyi.mta.hu}
\address{Department of Mathematics, Uppsala University, Uppsala, Sweden}
\email{marco.golla@math.uu.se}
\begin{document}

\begin{abstract}
We consider the question of which Dehn surgeries along a given knot bound rational homology balls.
We use Ozsv\'ath and Szab\'o's correction terms in Heegaard Floer homology to obtain 
general constraints on the surgery coefficients. 
We then turn our attention to the case of integral surgeries, with particular emphasis on positive torus knots.
Finally, combining these results with a lattice-theoretic obstruction based on Donaldon's theorem, we classify which integral surgeries along torus knots of the form $T_{kq\pm 1,q}$ bound rational homology balls.
\end{abstract}

\maketitle

\section{Introduction}

A \emph{filling} of a closed 3--manifold $Y$ is a smooth, compact 4--manifold $Z$ with $\partial Z = Y$. 
It is very natural to ask how simple fillings of a given $Y$ can be: Lickorish \cite{Lickorish} and Wallace \cite{Wallace} 
showed that there is always a simply-connected filling; 
Milnor \cite{Milnor-spin} proved that there is always a spin filling. 
The Rokhlin invariant \cite{Rokhlin} and, more recently, Donaldson's theorems \cite{Donaldson} provided formidable 
obstructions to the existence of fillings which are \emph{integral} 
or \emph{rational homology balls} (i.e. have the same homology of a point, with integral or rational coefficients).

On the more constructive side, Casson and Harer \cite{CassonHarer} built families of Seifert fibred spaces admitting a rational homology ball filling. 
Lisca \cites{Lisca-ribbon, Lisca-sums} used Donaldson's theorem to find all relations among lens spaces in the rational homology 3--dimensional cobordism group. 
Lecuona \cite{Lecuona} tackled the problem of determining which 3--legged Seifert fibred spaces bound rational homology balls; the first author \cite{Paolo} studied which rational homology $S^1\times S^2$'s bound rational homology $S^1\times D^3$'s.

In another direction,
Owens and Strle \cite{OwensStrle} addressed the question of when the intersection form of $Z$ can be negative definite, 
in the case when $Y$ is obtained as Dehn surgery along a knot in $S^3$, 
and answered it completely in the case of torus knots.

Given a knot $K$ in $S^3$ we address the following question: which Dehn surgeries along $K$ bound rational homology balls? 
From this viewpoint Lisca's work \cite{Lisca-ribbon} can be regarded as a complete answer for the unknot.
Moreover, since Dehn surgeries on concordant knots are homology cobordant, this provides
a complete answer for every smoothly slice knot.

We use Ozsv\'ath and Szab\'o's \emph{correction terms} in Heegaard Floer homology \cite{OzsvathSzabo-absolutely} to obtain constraints on the surgery coefficients.
For instance, we prove the following.

\begin{thm}\label{t:pqfinite}
For every knot $K$ and every positive integer $q$, there exist finitely many positive integers $p$ such that $S^3_{p/q}(K)$ 
bounds a rational homology ball.
\end{thm}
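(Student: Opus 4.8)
The plan is to play the rigidity forced by a rational homology ball filling against the fact that the correction terms of $L(p,q)$ grow like a parabola in the \spinc label, whereas replacing the unknot by $K$ perturbs them only by a bounded amount.

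First I would record what the filling buys us. Suppose $Y=S^3_{p/q}(K)$ bounds a rational homology ball $W$, with $p/q$ in lowest terms; then $H_1(Y;\Z)\cong\Z/p\Z$ carries a metabolic linking form, so $p=m^2$ is a perfect square, and the set of \spinc structures on $Y$ that extend over $W$ is a coset of $\mathrm{im}\bigl(H^2(W;\Z)\to H^2(Y;\Z)\bigr)$, which (by the half-lives-half-dies principle) has order $m$ and hence equals the unique order-$m$ subgroup $m\Z/m^2\Z$ of $H^2(Y;\Z)\cong\Z/m^2\Z$. In any affine identification $\Spinc(Y)\cong\Z/p\Z$ these classes are therefore $i_0,\,i_0+m,\,i_0+2m,\dots,i_0+(m-1)m$: an arithmetic progression of length $m$ and common difference $m$. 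For each such $\ft$, the Ozsv\'ath--Szab\'o inequality \cite{OzsvathSzabo-absolutely} applied to the negative-semidefinite manifolds $W$ and $\overline W$ gives $d(Y,\ft)\ge0$ and $d(Y,\ft)\le0$, so $d(Y,\ft)=0$.

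Next I would bring in the unknot. By the surgery formula for correction terms (Ozsv\'ath--Szab\'o, Ni--Wu), $d(S^3_{p/q}(K),i)=d(S^3_{p/q}(\unknot),i)-2V'_i$ with $V'_i$ a non-negative integer bounded above by the knot invariant $V_0(K)$; since $S^3_{p/q}(\unknot)$ is $\pm L(p,q)$, the vanishing above yields $d(L(p,q),i)\le 2V_0(K)$ at the $m$ special classes (in one orientation; in the other one gets the stronger $d(L(p,q),i)\le0$). The Ozsv\'ath--Szab\'o recursion \cite{OzsvathSzabo-absolutely} then gives $d(L(p,q),i)=-\tfrac14+\tfrac{(2i+1-p-q)^2}{4pq}+E(i)$, where $E(i)$ depends only on $q$, $p\bmod q$ and $i\bmod q$, hence $|E(i)|\le C(q)$ for a constant depending only on $q$. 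Combining, each of the $m$ special classes satisfies $(2i+1-p-q)^2\le \bigl(\tfrac14+C(q)+2V_0(K)\bigr)\,4pq$, i.e. $|2i+1-p-q|\le B\sqrt p$ for a constant $B=B(K,q)$. So all $m$ of them lie in an interval of length $B\sqrt p=Bm$; but they form an arithmetic progression with gap $m$, whose total span is $(m-1)m$, so this can happen only if $m-1\le B$. Hence $p=m^2\le(B+1)^2$, and only finitely many $p$ survive.

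The step I expect to be the main obstacle is the first one, and more precisely the need to extract the full arithmetic-progression structure rather than the mere count $\#\{\ft:d(Y,\ft)=0\}\ge\sqrt p$. The function $i\mapsto d(L(p,q),i)$ is a parabola whose vertex has width of order $\sqrt{pq}$, so there are on the order of $\sqrt{pq}$ \spinc structures with $|d|$ below any fixed bound --- far more than $\sqrt p$. It is only because the vanishing classes are equally spaced with gap $\sqrt p$ that they cannot all be crowded near the vertex, and this is what forces the finiteness. Everything else is either a citation --- the surgery formula and the lens-space recursion --- or an elementary counting argument.
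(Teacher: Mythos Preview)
Your proposal is correct and yields the finiteness statement cleanly. The route, however, is not quite the paper's.

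The paper deduces \cref{t:pqfinite} from the quantitative \cref{t:pqbounds}. There the $m$ extending \spinc structures are pinned down not via the group structure of $H^2(Y;\Z)$ but via \emph{integrality}: using Dedekind--Rademacher sums, \cref{l:congruence} and \cref{l:integrality} show that $d(L(m^2,q),i)\in\Z$ forces $2i+1\equiv q\pmod m$ and that exactly $m$ labels have integral correction term; since vanishing correction terms are in particular integral, these must be the extending ones. The paper also invokes Donaldson's theorem (via \cref{r:Donaldson2} and \cref{l:dLpq-zero}) to locate one $i_0$ with $d(S^3_{p/q}(\unknot),i_0)=0$, and the explicit bound $4|d(L(q,r),j)|\le q-1$ of \cref{l:dLpq-bound} to make the inequalities precise.

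Your route bypasses all of that: you read off the arithmetic progression directly from \cref{p:extension} (the image of $H^2(Z;\Z)$ is the unique order-$m$ subgroup $m\Z/m^2\Z$), and you only need \emph{some} bound $C(q)$ on $|d(L(q,r),j)|$, which is trivial by finiteness. This is lighter---no Donaldson, no Dedekind sums---and perfectly adequate for the bare finiteness of \cref{t:pqfinite}. What the paper's extra work buys is the explicit two-sided estimate of \cref{t:pqbounds} (in particular a lower bound $m>\sqrt{(2\nu-1)q}$ when $\nu>0$), which your argument does not give.

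Two small points to tighten. First, the assertion that the Ni--Wu labelling $\Spinc(S^3_{p/q}(K))\cong\Z/p\Z$ is $H^2$-equivariant (so that the order-$m$ coset really is $\{i_0,i_0+m,\dots\}$ in those labels) deserves a sentence of justification or a citation; it is standard---built into the construction via relative \spinc structures on the knot complement---and the paper itself uses it implicitly in the proof of \cref{l:qgem}, but it is the linchpin of your argument. Second, your displayed formula for $d(L(p,q),i)$ has the sign of the parabola term flipped relative to the paper's convention~\eqref{e:Lrecursion}; what you wrote is $d(S^3_{p/q}(\unknot),i)=-d(L(p,q),i)$. This is cosmetic and does not affect the bound.
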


Many of our results will be expressed in terms of the knot invariant $\nu^+$, a byproduct of the Heegaard Floer package defined by Hom and Wu \cite{HomWu} (see \cref{s:prelim} below).

\begin{thm}\label{p:mvsnu1}
There are at most two positive integer values of $n$ such that $S^3_n(K)$ bounds a rational homology ball; 
if there are two, they are consecutive squares and $1+8\nu^+(K)$ is a perfect square.
\end{thm}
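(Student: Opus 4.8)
The strategy is to combine the correction-term surgery formula with the fact that a rational homology ball filling forces the relevant correction terms to vanish, and then to turn that vanishing into rigid arithmetic constraints on the invariants $V_j(K)$. So, suppose $W$ is a rational homology ball with $\partial W=S^3_n(K)$. Then $n=|H_1(S^3_n(K))|$ must be a perfect square, say $n=m^2$; moreover the $\Spinc$ structures on $S^3_n(K)$ that extend over $W$ form a coset of a subgroup $G\le H_1(S^3_n(K))\cong\Z/m^2\Z$ of order $m$ on which the linking form vanishes, and since $\Z/m^2\Z$ has a unique subgroup of order $m$ we get $G=m\Z/m^2\Z$. As $d(S^3_n(K),\fs)=0$ for every $\fs$ extending over $W$ (apply the correction-term inequality to $W$ and to $-W$, both of which are rational homology balls), it follows that $d(S^3_{m^2}(K),i)=0$ for all $i$ in a single residue class modulo $m$, using the standard identification $\Spinc(S^3_{m^2}(K))\cong\Z/m^2\Z$ under which conjugation sends $i$ to $m^2-i$.

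Next I would feed in the surgery formula of Ozsv\'ath--Szab\'o and Ni--Wu: for $0\le i\le m^2-1$,
\[ d(S^3_{m^2}(K),i)=d(S^3_{m^2}(\unknot),i)-2V_{\min(i,\,m^2-i)}(K),\qquad d(S^3_{m^2}(\unknot),i)=-\tfrac14+\tfrac{(m^2-2i)^2}{4m^2}. \]
For the left-hand side to vanish at the $m$ values of $i$ above, $d(S^3_{m^2}(\unknot),i)$ must lie in $2\Z_{\ge0}$ there; an elementary congruence computation shows this forces $i\equiv0\pmod m$ when $m$ is odd and $i\equiv m/2\pmod m$ when $m$ is even, and since in either case there are exactly $m$ such residues, the residue class is completely determined. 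Solving $V_{\min(i,m^2-i)}(K)=\tfrac12\,d(S^3_{m^2}(\unknot),i)$ and reindexing then prescribes $V_j(K)$ along an arithmetic progression of step $m$; in particular
\[ V_0(K)=\tfrac{m^2-1}{8}\quad(m\ \text{odd}),\qquad V_{m/2}(K)=\tfrac{m(m-2)}{8}\quad(m\ \text{even}), \]
and, when $m\ge3$, also $V_{m(m-1)/2}(K)=0$ and $V_{m(m-3)/2}(K)=1$.

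The conclusion is then combinatorial bookkeeping, using that $j\mapsto V_j(K)$ is non-increasing with all jumps in $\{0,1\}$ and that $V_j(K)=0$ if and only if $j\ge\nu^+(K)$. Among odd $m$, the number $V_0(K)$ already determines $m$; among even $m$, the prescribed value $V_{m/2}(K)=\tfrac{m(m-2)}{8}$ is strictly increasing in $m$ whereas $V$ is non-increasing, so at most one even $m$ can occur. Hence at most two positive integers $n=m^2$ bound rational homology balls. If two values $m_1<m_2$ do occur --- necessarily of opposite parity --- then $V_{m_1(m_1-1)/2}(K)=0$ gives $\nu^+(K)\le\tfrac{m_1(m_1-1)}{2}$, while for $m_2\ge3$ the relation $V_{m_2(m_2-3)/2}(K)=1$ gives $\nu^+(K)>\tfrac{m_2(m_2-3)}{2}$; since $m_2-m_1$ is odd and positive, these are compatible only for $m_2=m_1+1$, in which case the two inequalities squeeze $\nu^+(K)=\tfrac{m_1(m_1-1)}{2}$, and therefore $1+8\nu^+(K)=(2m_1-1)^2$. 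The lone configuration escaping this argument is $\{m_1,m_2\}=\{1,2\}$, handled directly: $V_0(K)=0$ forces $\nu^+(K)=0$, and $1+8\nu^+(K)=1$.

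I expect the last paragraph to be the main obstacle: one has to play the constraints coming from the two distinct candidate surgeries against one another and against the rigid staircase shape of $(V_j(K))_j$, in order to force simultaneously that the surgery coefficients are consecutive squares and that $\nu^+(K)$ takes the exact value $\tfrac{m_1(m_1-1)}{2}$. The integrality computation that singles out the residue class modulo $m$ is elementary, but is where the labelling conventions for $\Spinc$ structures and for the correction terms of $S^3_{m^2}(\unknot)$ must be tracked with care.
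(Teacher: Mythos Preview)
Your proof is correct and rests on the same foundation as the paper's: both arguments identify the residue class of \spinc structures with integral correction term (your congruence computation is the paper's Lemma~5.2), deduce that all $m$ of these correction terms must vanish (the paper's Lemma~5.3), and then read off $V_{m(m-1)/2}(K)=0$ and, for $m\ge 3$, $V_{m(m-3)/2}(K)=1$.

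Where you diverge is in the endgame. The paper packages these two constraints as the single interval bound $1+\sqrt{1+8\nu}\le 2m < 3+\sqrt{9+8\nu}$ (its Theorem~5.1), observes that this interval has length at most~$4$ and hence contains at most two even integers, necessarily consecutive, and finishes with a mod-$8$ computation. You instead extract two further prescribed values, $V_0(K)=(m^2-1)/8$ for odd $m$ and $V_{m/2}(K)=m(m-2)/8$ for even $m$, and run a parity-by-parity uniqueness argument using the monotonicity of $(V_j)$. Your route is a bit more hands-on and uses more of the available data; the paper's interval argument is shorter and yields the quantitative Theorem~5.1 as a by-product, which it reuses elsewhere. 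Both are clean; neither is strictly more general.
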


We then turn our attention to the case of positive torus knots. 
We prove the following:

\begin{thm}\label{t:pvs9q}
If $p,q > 1$ are coprime integers with $p>9q$, then no positive integral surgery along $T_{p,q}$ bounds a rational homology ball.
\end{thm}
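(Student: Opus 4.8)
The plan is to combine the Heegaard Floer constraint of \cref{p:mvsnu1} with an explicit computation of $\nu^+(T_{p,q})$ to show that, when $p>9q$, the arithmetic condition forced by \cref{p:mvsnu1} cannot be met. Recall that for a positive torus knot one has $\nu^+(T_{p,q}) = g(T_{p,q}) = \frac{(p-1)(q-1)}{2}$, since $T_{p,q}$ is an L-space knot and for L-space knots $\nu^+$ agrees with the Seifert genus. By \cref{p:mvsnu1}, if any positive integer $n$ has $S^3_n(T_{p,q})$ bounding a rational homology ball, then (in the case where two such $n$ exist) $1+8\nu^+(T_{p,q}) = 1 + 4(p-1)(q-1)$ must be a perfect square; and even when there is a unique such $n$, one still needs $n$ itself to be a perfect square, because $H_1(S^3_n(K);\Z) = \Z/n$ must admit the structure forced by bounding a rational homology ball (the order of $H_1$ of the boundary of a rational homology ball is a perfect square — this is the standard linking-form / $|H_1|=d^2$ constraint). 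So first I would record both arithmetic conditions: $n = d^2$ for some integer $d$, and, when relevant, $1 + 8\nu^+ = \square$.

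Next I would analyze the equation $m^2 = 1 + 4(p-1)(q-1)$ together with $n = d^2$. The key point is that the surgery coefficient $n$ interacts with $\nu^+$ through the correction-term formulas of Ozsv\'ath--Szab\'o that underlie \cref{p:mvsnu1}: roughly, $S^3_n(K)$ can bound a rational homology ball only if $n$ is close to $2\nu^+(K)$, more precisely only if $n$ lies in a narrow window around $2\nu^+(K) = (p-1)(q-1)$ dictated by the $V_i$-sequence of the knot. Thus $d^2 \approx (p-1)(q-1)$, i.e. $d$ is essentially $\sqrt{(p-1)(q-1)}$, and then $m^2 = 1+4d^2$ forces $(m-2d)(m+2d)=1$, which is impossible for positive integers unless $d=0$. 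To make this rigorous I would extract from the proof of \cref{p:mvsnu1} (or re-derive directly from the large-surgery formula for $\HFp$ and the $d$-invariant surgery formula) the precise inequality bounding $|n - (p-1)(q-1)|$, and show that under $p > 9q$ this window is too small to contain a perfect square distinct from the forbidden values — equivalently, that it forces a contradiction with $1+8\nu^+$ being a square.

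The concrete mechanism I expect to use is the computation of $d$-invariants of $S^3_n(T_{p,q})$ via the staircase complex: for an L-space knot, $d(S^3_n(K),\fs)$ is given explicitly in terms of $n$, the $\Spinc$-structure, and the torsion coefficients $t_i(K)$ (equivalently the $V_i$'s), and bounding a rational homology ball forces $d(S^3_n(T_{p,q}),\fs) = d(L(n,1),\fs')$ for a suitable identification of $\Spinc$-structures (since the boundary of a rational homology ball has $d$-invariants matching those of a lens space up to the sign/spin-structure bookkeeping). Comparing these two families of rational numbers, and using that the $t_i(T_{p,q})$ grow quadratically while the lens-space $d$-invariants are controlled, yields the bound on $n$.

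The main obstacle will be the last step: pinning down the exact width of the admissible window for $n$ and checking that the ratio $p/q > 9$ is precisely what kills it. The constant $9$ strongly suggests the estimate is delicate — one likely has to track the first few torsion coefficients $t_0, t_1, t_2, \dots$ of $T_{p,q}$ explicitly (these are $\tfrac{(p-1)(q-1)}{2}$, and then decrease in a pattern governed by the semigroup of $T_{p,q}$), bound the corresponding $d$-invariant differences from below, and compare with the maximal possible $d$-invariant of $L(n,1)$, which is of size $\sim \tfrac14$. Getting the threshold $9q$ rather than some larger multiple of $q$ will require a careful, rather than crude, version of these estimates, and that is where the real work lies; the Heegaard Floer input and the number-theoretic squeeze are comparatively routine once the window estimate is in hand.
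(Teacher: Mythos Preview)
Your proposal has a genuine gap: the tools you invoke from \cref{p:mvsnu1} are not strong enough on their own, and the contradiction you sketch does not go through.

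First, the condition ``$1+8\nu^+$ is a perfect square'' from \cref{p:mvsnu1} only applies when there are \emph{two} integral surgeries bounding; it says nothing when there is exactly one, so it cannot be the starting point of an obstruction. Second, your Pell-type argument is wrong: if $d^2$ is close to $2\nu^+=(p-1)(q-1)$ and $m^2=1+8\nu^+$, you get $m^2-4d^2\approx 1$, and the equation $m^2-4d^2=1$ has infinitely many solutions, so no contradiction arises this way. Third, the condition for bounding a rational homology ball is that the relevant correction terms \emph{vanish}, not that they match those of $L(n,1)$; what one actually uses is $d(S^3_{m^2}(\unknot),i)=2V_i(K)$ for the extending \spinc structures.

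The paper's argument is sharper and uses more \spinc structures at once. The key observation you are missing is that when $p>9q$ the first ten elements of the semigroup $\langle p,q\rangle$ are $0,q,2q,\dots,9q$; by the Borodzik--Livingston formula this means $V_i(T_{p,q})$ equals the triangular numbers $1,3,6,10$ on successive intervals of lengths $q,2q,3q$ just below $\nu$. Now \cref{l:vanishingds} forces $V_{m(m-3)/2}=1$, $V_{m(m-5)/2}=3$, $V_{m(m-7)/2}=6$, $V_{m(m-9)/2}=10$ (for $m\ge 9$, which one checks separately). Comparing the first pair of intervals gives $m<3q$, while comparing the last pair gives $m>3q$; this is the contradiction, and the constant $9$ is exactly what makes the semigroup begin $0,q,\dots,9q$. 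The paper also gives a second, shorter proof combining the upper bound of \cref{p:mvsnu} with the Owens--Strle lower bound $n\ge m(T_{p,q})\ge q(p-1)$; that external input is another ingredient your sketch does not supply.
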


Finally, combining these results with a lattice-theoretic
obstruction based on Donaldson's theorem, 
we classify which integral surgeries along torus knots of the form $T_{kq\pm 1,q}$ bound rational homology balls.

\begin{thm}\label{t:kq+1}
Let $p,q > 1$ be integers such that $p\equiv \pm 1\pmod q$. 
The $3$--manifold $S^3_{n}(T_{p,q})$ bounds a rational homology ball if and only if the triple $(p,q;n)$ 
is one of the following:
\begin{itemize}
 \item[{(i)}] $(p,q;n)=(q+1,q;q^2)$ for some $q\ge 2$;
 \item[{(ii)}] $(p,q;n)=(q+1,q;(q+1)^2)$ for some $q\ge 2$;
 \item[{(iii)}] $(p,q;n)=(4q\pm 1,q;(2q)^2)$ for some $q\ge 2$;
 \item[{(iv)}] $(p,q;n)\in \{(5,2;9),(5,3;16),(13,2;25)\}$;
 \item[{(v)}] $(p,q;n) \in\{(9,4;36), (25,4;100)\}$;
 \item[{(vi)}] $(p,q;n) \in \{(17,3;49),(22,3;64),(43,6;256)\}$.
\end{itemize}
The manifolds corresponding to triples in \emph{(iv)} are lens spaces; the ones corresponding 
to triples in \emph{(v)} are connected sums of two lens spaces; all the other manifolds are Seifert fibred over $S^2$ with three singular fibres.
\end{thm}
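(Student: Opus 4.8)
The plan is to use the Heegaard Floer results above to reduce to finitely many families of candidate triples, and then to settle each candidate either by an explicit rational ball construction or by a Donaldson-type lattice obstruction; I treat both directions of the ``if and only if'' together, organised around these two tools.

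\emph{Step 1 (reduction).} Suppose $S^3_n(T_{p,q})$ bounds a rational homology ball $W$. Then $|H_1(S^3_n(T_{p,q}))|=|n|$ is a perfect square, and the cases $n\le 0$ are excluded by a separate, shorter argument (for $n=0$ the homology is wrong; for $n<0$ the manifold is obstructed by the invariants of the previous sections together with a Casson--Walker, or Neumann--Siebenmann $\bar\mu$, computation). So $n=m^2$ with $m>0$. By \cref{t:pvs9q} we have $p\le 9q$, and since $T_{p,q}=T_{q,p}$ and $p\equiv\pm1\pmod q$ we may assume $T_{p,q}=T_{kq+\varepsilon,q}$ with $\varepsilon\in\{+1,-1\}$ and $1\le k\le 9$. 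This leaves finitely many one--parameter families of knots, and by \cref{p:mvsnu1} at most two values of $m$ in each.

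\emph{Step 2 (pinning down $m$).} For each family I would compute the correction terms $d\big(S^3_{m^2}(T_{p,q}),i\big)$ from the Ozsv\'ath--Szab\'o surgery formula in terms of the local invariants $V_j=V_j(T_{p,q})$, which for a torus knot are given explicitly by the gap counting function of the numerical semigroup generated by $p$ and $q$. Because $W$ is a rational homology ball, $d\big(S^3_{m^2}(T_{p,q}),\cdot\big)$ must vanish on the coset of $\Spinc$ structures associated with the order-$m$ (necessarily cyclic) subgroup of $\Z/m^2\Z\cong H_1$. Write $g=\tfrac{(p-1)(q-1)}{2}$. For a coset element $i$ with $g\le i\le m^2-g$ one has $V_{\min(i,\,m^2-i)}=0$, so the vanishing forces $(2i-m^2)^2=m^2$, i.e.\ $i\in\{\tfrac{m^2-m}{2},\tfrac{m^2+m}{2}\}$; since the coset is an arithmetic progression of $m$ terms with common difference $m$, it can contain at most two elements of $[g,m^2-g]$, and this already forces $m^2\le 2g+O(m)$. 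The coset element(s) nearest the centre then force $V_0(T_{p,q})=\tfrac{m^2-1}{8}$ when $m$ is odd and $V_{m/2}(T_{p,q})=\tfrac{m(m-2)}{8}$ when $m$ is even, while the remaining coset elements force $V_j(T_{p,q})$ to take prescribed values; substituting the semigroup formula turns all of this into a system of Diophantine conditions in $(q,m)$. Solving these family by family should yield $m\in\{q,q+1\}$ when $k=1$, $m=2q$ when $k=4$, and, for the remaining values of $k$, only the sporadic pairs $(q,m)$ appearing in \emph{(iv)}--\emph{(vi)}. I expect this to be the main obstacle: it requires careful control of the torus knot staircase, the parity of $m$, and the fact that the relevant set of $\Spinc$ structures is a coset rather than a subgroup through the origin.

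\emph{Step 3 (constructions and lattice obstruction).} It remains to decide, for each surviving candidate, whether the manifold actually bounds a rational homology ball. For the triples in \emph{(iv)} the surgeries are the lens spaces $L(m^2,q^2)$, which lie on Lisca's list \cite{Lisca-ribbon}; for \emph{(v)} one has $n=pq$, so by the surgery description of torus knots the manifold is a connected sum of two lens spaces, each of which is on Lisca's list, and the boundary connected sum of the corresponding rational balls works. For the families \emph{(i)}--\emph{(iii)} and the sporadic triples in \emph{(vi)} the manifolds are Seifert fibred over $S^2$ with three singular fibres, and I would produce a rational homology ball by Kirby calculus --- equivalently, a sequence of rational blow-downs --- uniformly in $q$ for \emph{(i)}--\emph{(iii)}, in the spirit of the constructions of Casson--Harer \cite{CassonHarer} and Lecuona \cite{Lecuona}. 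Conversely, for each candidate triple not on the list one uses that $S^3_{m^2}(T_{p,q})$ bounds a canonical negative definite plumbing $X$ (after reversing orientation if necessary) coming from its Seifert or lens space description: if it also bounded a rational homology ball $W$, then $X\cup_{S^3_{m^2}(T_{p,q})}(-W)$ would be closed and negative definite, so by Donaldson's theorem \cite{Donaldson} the intersection lattice $(H_2(X;\Z)/\mathrm{tors},Q_X)$ would embed, as a finite-index sublattice, into the diagonal lattice $\langle -1\rangle^{b_2(X)}$, and a case-by-case check in the spirit of Lisca's analysis shows that no such embedding exists. Assembling Steps 1--3 gives the classification, and the statement about the topological type of the manifolds is immediate from the surgery description of torus knots.
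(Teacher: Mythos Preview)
Your overall architecture---Heegaard Floer reduction, then constructions and Donaldson obstructions---matches the paper's, but the weight you place on each tool is quite different, and your Step~2 contains a genuine gap.

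The paper does \emph{not} push the correction-term analysis as far as you propose. After using \cref{t:pvs9q} and \cref{p:mvsnu} to bound $k\le 9$ and constrain $m$, it essentially abandons the $d$-invariant and switches to the lattice obstruction as the \emph{primary} tool. Concretely, for each range of $n$ relative to $pq\pm1$ it writes down the positive (or dual) canonical plumbing of $\pm S^3_n(T_{kq\pm1,q})$ explicitly (\cref{r>pq+1}) and tests embeddability in $(\Z^N,I)$, the key combinatorial device being the ``2-chain'' lemma (\cref{2chain}): removing the few vertices of weight $>2$ leaves long 2-chains whose essentially rigid embeddings force enough basis vectors to overflow the rank. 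This argument works uniformly in $q$ and is what actually eliminates the infinite families you hope to kill in Step~2. Your Step~2, by contrast, asks the $d$-invariant/semigroup inequalities \eqref{e:star}--\eqref{e:starstar} to pin down $(q,m)$ family by family; the paper explicitly records this as an \emph{open question} at the end of Section~5.3, so you cannot simply assert that ``solving these family by family should yield'' the answer. That is the missing idea: without the plumbing/2-chain analysis (or a proof that the semigroup inequalities suffice), Step~2 does not close.

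Two smaller points. For the constructions in Step~3, the paper does not use ad hoc Kirby calculus or rational blow-downs; it uses a single clean rational homology cobordism lemma for Seifert manifolds (\cref{join}): whenever two exceptional fibres satisfy $\beta_h/\alpha_h+\beta_k/\alpha_k=1$, the manifold is $\Q H$-cobordant to the one with those fibres deleted (and $b$ decreased by $1$). This immediately reduces families (i)--(iii) to $S^3$ or $\pm L(4,1)$, and the sporadic cases in (vi) are supplied by \cite{55letters}. Finally, your treatment of $n<0$ is more elaborate than needed: the surgery trace is a rank-one negative definite filling, so Donaldson forces $n=-1$; the paper itself is silent on this, working throughout with $n\ge m(T_{p,q})>0$ via the Owens--Strle bound.
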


It is very interesting to compare our list with the list of \cite{55letters}, where Fern\'andez de Bobadilla, Luengo, Melle Hern\'andez and N\'emethi classify singularities with one Puiseux pair that appear as cusps of rational unicuspidal curves in the complex projective plane. 
The complement of a regular neighbourhood of such a curve (taken with the opposite orientation with respect to the one inherited by $\mathbb{CP}^2$) is in fact a rational homology ball whose boundary is a positive surgery along a torus knot (see also \cite{BorodzikLivingston}).

The family (ii) and the sub-family $(4q-1,q;(2q)^2)$ above correspond to the families (a) and (b) in~\cite[Theorem 1.1]{55letters}; the cases $(2,13;25)$, $(4,25;100)$, belong to the families (d) and (c), respectively, while $(3,22;64)$, and $(6,43;256)$ correspond to cases (e) and (f).
On one hand, the comparison shows that the topological setting is, perhaps unsurprisingly, richer than the algebraic setting; on the other, it shows that the two non-algebraic infinite families we find are not-too-far from the algebraic ones.
Finally, it also shows that there are many more triples $(p,q;n)$ for which $S^3_n(T_{p,q})$ bounds a rational homology ball, if we drop the assumption $p\equiv \pm 1\pmod q$ (namely, the families (c) and (d) in~\cite[Theorem 1.1]{55letters}).

In a forthcoming paper, joint with Kyle Larson, we will address related questions for integral surgeries along cables of knots.

\subsection*{Organisation of the paper} In \cref{s:plumb} and \cref{s:prelim} we recall some basic facts about plumbings and Donaldson's theorem, and about correction 
terms in Heegaard Floer homology.
In \cref{s:rational} we study correction terms of lens spaces and rational surgeries along knots in $S^3$ and prove \cref{t:pqbounds}, which is a quantitative version of \cref{t:pqfinite}.
In \cref{s:integral} we focus on integral surgeries and prove \cref{p:mvsnu1} as a corollary of \cref{p:mvsnu}; we then turn to the special cases of alternating and torus knots, proving \cref{t:pvs9q}. 
Finally, in \cref{torus} we study in more detail surgeries along torus knots and we prove \cref{t:kq+1}.

\subsection*{Acknowledgements} We would like to thank J\'ozsef Bodn\'ar, Daniele Celoria, Kyle Larson, Francesco Lin, Paolo Lisca, 
and Duncan McCoy for interesting conversations. 
The first author was partially supported by the ERC Advanced Grant LDTBud. The second author was partially supported 
by the PRIN--MIUR research project 2010--11 ``Variet\`a reali e complesse: geometria, topologia e analisi armonica'', by the FIRB research project ``Topologia e geometria di variet\`a in bassa dimensione'', and by the Alice and Knut Wallenberg foundation.

\section{Plumbed manifolds and rational homology cobordisms}\label{s:plumb}

In this section we briefly recall the language of plumbed manifolds; we then state the lattice-theoretic obstruction based on Donaldson's diagonalisation theorem which we will use in Section \ref{torus}
, and we study the extension of \spinc structures from the boundary to a rational homology ball.

\subsection{Plumbings}

In this paper, a \emph{plumbing graph} $\Gamma$ is a finite \emph{tree} where every vertex has an integral  weight assigned to it.
To every plumbing graph $\Gamma$ we associate a smooth oriented $4$--manifold 
$P(\Gamma)$ with boundary $\partial P(\Gamma)$ in the following way. 
For each vertex take a disc bundle over the 2--sphere with Euler number prescribed by the weight of the vertex. 
Whenever two vertices are connected by an edge we identify the trivial bundles over two small discs (one in each sphere) by exchanging the role of the fiber and the base coordinates. We call $P(\Gamma)$ (resp. $\partial P(\Gamma)$) a \emph{plumbed $4$--manifold} (resp. \emph{plumbed $3$--manifold}).

This definition can be extended to reducible $3$--manifolds; if the graph is a finite forest (i.e. a finite disjoint union of trees) we take the boundary connected sum of the plumbed $4$--manifolds associated to each connected component of $\Gamma$.

The group $H_2(P(\Gamma);\mathbb{Z})$ is free Abelian, generated by the zero sections of the sphere bundles (i.e. by vertices of the graph). 
Moreover, with respect to this basis, the intersection form of $P(\Gamma)$, which we indicate by $Q_{\Gamma}$, is described by the matrix $M_{\Gamma}$ whose entries $(a_{ij})$ are defined as follows:
\begin{itemize}
 \item $a_{ii}$ equals the Euler number of the corresponding disc bundle,
 \item $a_{ij}=1$ if the corresponding vertices are connected,
 \item $a_{ij}=0$ otherwise.
\end{itemize}
Finally note that $M_{\Gamma}$ is also a presentation matrix for the group
$H_1(\partial P(\Gamma);\mathbb{Z})$.

Recall from \cite{Neumann} that every plumbed $3$--manifold has a unique description via a \emph{positive canonical plumbing graph} as well as a \emph{negative} one.
Note that if $\Gamma$ is the positive canonical plumbing graph of a plumbed $3$--manifold, $Q_\Gamma$ is \emph{not} necessarily positive definite.
Moreover, given a plumbed $3$--manifold $\partial P(\Gamma)$ where $\Gamma$ is a positive canonical plumbing graph, there is an algorithm to construct the positive plumbing graph of $-\partial P(\Gamma)$ (i.e. $\partial P(\Gamma)$ with the reversed orientation).
We call this graph \emph{the dual of $\Gamma$} and we denote it with $\Gamma^*$.

\subsection{Intersection forms and \spinc structures}

An \emph{integral lattice} is a pair $(G,Q_G)$ where $G$ is a free Abelian group, equipped with a symmetric bilinear form $Q_G$.
We denote with $(\mathbb{Z}^N,\pm I)$ the standard positive or negative definite diagonal integral lattice.
A morphism of integral lattices is a homomorphism of Abelian groups which preserves the intersection form.
In particular, to a plumbed $3$--manifold we can associate the intersection lattice $H_2(P(\Gamma);\mathbb{Z})$.
 
The next result is implicit in several papers, see for instance~\cite{Lisca-ribbon} and~\cite{Lecuona}.

\begin{prop}\label{p:Donaldson}
Let $\partial P(\Gamma)$ be a rational homology sphere plumbed $3$--manifold described by its positive (respectively negative) canonical plumbing graph $\Gamma$ which has $N$ vertices. 
Suppose that $Q_{\Gamma}$ is definite and that $\partial P(\Gamma)$ bounds a rational homology ball. Then, there exists an injective morphism of integral lattices
\[
 (H_2(P(\Gamma);\mathbb{Z}),Q_{\Gamma})\hookrightarrow (\mathbb{Z}^N,I) 
\]
(resp. $(H_2(P(\Gamma);\mathbb{Z}),Q_{\Gamma})\hookrightarrow (\mathbb{Z}^N,-I)$).
\end{prop}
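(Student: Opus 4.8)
The statement to prove is Proposition~\ref{p:Donaldson}. Here is how I would approach it.

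\medskip

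The plan is to build a closed, definite 4--manifold by capping off $P(\Gamma)$ with a rational homology ball, and then apply Donaldson's diagonalisation theorem. First I would assume, without loss of generality, that $\Gamma$ is the \emph{negative} canonical plumbing graph, so that $Q_\Gamma$ is negative definite (the positive case follows by reversing orientations and replacing the rational homology ball with its mirror, or simply by running the same argument with $-\Gamma$ and its negative-definite dual $\Gamma^*$). Let $B$ be a rational homology ball with $\partial B = -\partial P(\Gamma)$, and form the closed oriented 4--manifold $X = P(\Gamma) \cup_{\partial P(\Gamma)} (-B)$.

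\medskip

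The key steps are then: (1) a Mayer--Vietoris computation showing that $H_1(X;\Q) = H_3(X;\Q) = 0$ and that $H_2(X;\Q) \cong H_2(P(\Gamma);\Q)$, using that $\partial P(\Gamma)$ is a rational homology sphere and $B$ a rational homology ball; in particular $b_2(X) = N$ and $X$ is negative definite over $\Q$. (2) An integral refinement: since $H_2(P(\Gamma);\Z) \cong \Z^N$ is torsion-free and the gluing region has finite $H_1$, the inclusion $H_2(P(\Gamma);\Z) \to H_2(X;\Z)$ is injective with torsion cokernel, and the intersection form of $X$ restricted to the image is exactly $Q_\Gamma$; combined with the fact that $Q_X$ is unimodular (Poincar\'e duality on the closed manifold $X$), this realises $(H_2(P(\Gamma);\Z),Q_\Gamma)$ as a full-rank sublattice of the unimodular lattice $(H_2(X;\Z),Q_X)$. (3) Donaldson's theorem: $Q_X$ is negative definite and unimodular, hence diagonalisable, so $(H_2(X;\Z),Q_X) \cong (\Z^N,-I)$, and composing the two inclusions gives the desired injective morphism $(H_2(P(\Gamma);\Z),Q_\Gamma) \hookrightarrow (\Z^N,-I)$.

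\medskip

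The main obstacle, and the only point requiring genuine care, is step (2): checking that the map on \emph{integral} $H_2$ is injective and that the form it carries is literally $Q_\Gamma$ (and not a sublattice thereof, or a form differing by the gluing data). This is where one must be careful that $X$ is built from a rational homology ball rather than a homology ball, so $H_2(X;\Z)$ may have the sublattice $\operatorname{im}(H_2(P(\Gamma);\Z))$ sitting inside it with nontrivial finite index; what saves the argument is that Donaldson's theorem applies to \emph{any} closed negative-definite 4--manifold regardless of this index, and that a finite-index inclusion of a full-rank lattice into a unimodular one is still an injective lattice morphism onto its image. I would spell out the Mayer--Vietoris sequence with $\Z$ coefficients carefully, noting that the torsion contributions from $H_1(\partial P(\Gamma);\Z)$ and $H_*(B;\Z)$ affect only the torsion of $H_*(X;\Z)$, and invoke the naturality of the intersection pairing under inclusion to identify the form on the image with $Q_\Gamma$.
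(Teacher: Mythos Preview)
Your approach is correct and is essentially the same as the paper's: glue $P(\Gamma)$ to (the reverse of) the rational homology ball to obtain a closed definite $4$--manifold $X$, then apply Donaldson's diagonalisation theorem and use the inclusion-induced map on $H_2$ to get the embedding. The paper's proof is a two-line sketch of exactly this argument; you have simply spelled out the Mayer--Vietoris and lattice-theoretic details (and the passage to $H_2(X;\Z)/\mathrm{torsion}$) that the paper leaves implicit.
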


If there exists an embedding as in the statement above, we will often simply say that $H_2(P(\Gamma);\Z)$ (or even $Q_\Gamma$ or $\Gamma$) embeds.

\begin{proof}
Let $W$ be the rational homology ball bounded by $\partial P(\Gamma)$ and let $X=P(\Gamma)\cup -W$.
By Donaldson's diagonalisation theorem the intersection form on $X$ is standard and the inclusion $P(\Gamma)\hookrightarrow X$ induces the desired morphism of integral lattices. 
\end{proof}

We now turn to studying the extension problem for \spinc structures.
Recall that the space $\Spinc(Z)$ of \spinc structures on a manifold $Z$ is freely acted upon by $H^2(Z;\Z)$, and that the restriction $\Spinc(Z)\to\Spinc(Y)$ induced by an inclusion $Y\hookrightarrow Z$ is equivariant with respect to this $H^2$--action.
Namely, fix a \spinc structure $\fs$ on $Z$ and a class $\alpha\in H^2(Z;\Z)$;
call $\ft$ the restriction of $\fs$ to $Y$ and $\beta\in H^2(Y;\Z)$ the restriction of $\alpha$.
Then, the restriction of $\alpha\cdot\fs$ is $\beta\cdot\ft$.

The following proposition is well-known, and it will be extensively used in the following sections. For completeness, we also sketch a proof.

\begin{prop}\label{p:extension}
Let $K$ be a knot in $S^3$, $r = p/q \neq 0$ be rational, and suppose that $Y=S^3_r(K)$ bounds a rational homology ball $Z$. Then $p = m^2$ is a square and the set of \spinc structures on $Y$ that extend to $Z$ is a cyclic quotient of $H^2(Z;\Z)$ comprised of $m$ elements.
\end{prop}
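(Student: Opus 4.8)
The plan is to reduce the statement to an elementary count of orders of finite groups. Write $r=p/q$ with $p>0$; since $Y=S^3_{p/q}(K)$ one has $H_1(Y;\Z)\cong\Z/p\Z$, so $Y$ is a rational homology sphere and $H^2(Y;\Z)\cong H_1(Y;\Z)\cong\Z/p\Z$ is cyclic of order $p$. Let $j\colon Y\hookrightarrow Z$ denote the inclusion and $j^*\colon H^2(Z;\Z)\to H^2(Y;\Z)$ the induced map. By the $H^2$--equivariance of restriction of \spinc structures recalled above, the \spinc structures on $Y$ that extend over $Z$ form a single coset in $\Spinc(Y)$ of the subgroup $\operatorname{im}(j^*)$; as the $H^2(Y;\Z)$--action on $\Spinc(Y)$ is free, this coset has exactly $m:=|\operatorname{im}(j^*)|$ elements, and $\operatorname{im}(j^*)$ is a cyclic quotient of $H^2(Z;\Z)$ (being a quotient of $H^2(Z;\Z)$ that embeds into the cyclic group $\Z/p\Z$). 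Thus the whole proposition follows once we prove the single identity $m^2=p$, equivalently $|\operatorname{im}(j^*)|^2=|H_1(Y;\Z)|$.

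Next I would record the homology of $Z$. Since $Z$ is a rational homology ball, the groups $H_1(Z;\Z)$, $H_2(Z;\Z)$, $H_3(Z;\Z)$ are all finite; in particular the universal coefficient theorem gives $H^2(Z;\Z)\cong H_1(Z;\Z)$ and $H^3(Z;\Z)\cong H_2(Z;\Z)$ (the relevant $\operatorname{Hom}$--terms vanish because the homology groups are finite, and $\operatorname{Ext}(A,\Z)\cong A$ for $A$ finite). Combining this with Lefschetz duality for the compact oriented $4$--manifold $(Z,Y)$ yields
\[
H_2(Z,Y;\Z)\cong H^2(Z;\Z)\cong H_1(Z;\Z),\qquad H^2(Z,Y;\Z)\cong H_2(Z;\Z),\qquad H_1(Z,Y;\Z)\cong H^3(Z;\Z)\cong H_2(Z;\Z),
\]
all finite groups. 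Finally I would note $H^1(Y;\Z)=0$ and $H_2(Y;\Z)=0$, both because $H_1(Y;\Z)$ is finite.

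Then I would run the two long exact sequences of the pair $(Z,Y)$. From the cohomology sequence, $0=H^1(Y;\Z)\to H^2(Z,Y;\Z)\xrightarrow{\iota^*}H^2(Z;\Z)\xrightarrow{j^*}H^2(Y;\Z)$ shows $\iota^*$ is injective, hence
\[
m=|\operatorname{im}(j^*)|=\frac{|H^2(Z;\Z)|}{|H^2(Z,Y;\Z)|}=\frac{|H_1(Z;\Z)|}{|H_2(Z;\Z)|}.
\]
From the homology sequence, using $H_2(Y;\Z)=0$ and that $H_0(Y;\Z)\to H_0(Z;\Z)$ is an isomorphism, one gets the exact sequence of finite groups
\[
0\to H_2(Z;\Z)\to H_2(Z,Y;\Z)\to H_1(Y;\Z)\to H_1(Z;\Z)\to H_1(Z,Y;\Z)\to 0;
\]
taking the alternating product of orders and substituting $|H_2(Z,Y;\Z)|=|H_1(Z;\Z)|$ and $|H_1(Z,Y;\Z)|=|H_2(Z;\Z)|$ gives
\[
|H_1(Y;\Z)|=\frac{|H_2(Z,Y;\Z)|\cdot|H_1(Z;\Z)|}{|H_2(Z;\Z)|\cdot|H_1(Z,Y;\Z)|}=\left(\frac{|H_1(Z;\Z)|}{|H_2(Z;\Z)|}\right)^{2}=m^2,
\]
which is the required identity; in particular $p=m^2$.

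There is no genuine crux here; the only thing demanding care is the bookkeeping of the Lefschetz-duality and universal-coefficient identifications — notably checking that $H_3(Z;\Z)$ is torsion, so that $\operatorname{Hom}(H_3(Z;\Z),\Z)=0$ and hence $H^3(Z;\Z)\cong H_2(Z;\Z)$ — and verifying that every group occurring in the two exact sequences is finite, so that passing to orders is legitimate. All of this is immediate from $b_1(Z)=b_2(Z)=b_3(Z)=0$ together with $Y$ being a connected rational homology sphere.
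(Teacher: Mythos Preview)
Your proof is correct and follows essentially the same approach as the paper: both arguments compute orders in the long exact sequence of the pair $(Z,Y)$ after identifying the relevant groups via Poincar\'e--Lefschetz duality and the universal coefficient theorem. The only cosmetic difference is that the paper works entirely in the cohomology sequence (extending to the $H^3$ terms and using $|H^2(Z,Y)|=|H^3(Z)|$, $|H^2(Z)|=|H^3(Z,Y)|$), whereas you split the bookkeeping between the cohomology sequence (to compute $m$) and the homology sequence (to compute $p$); these two computations are Lefschetz duals of one another, so the content is identical.
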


\begin{proof}[Sketch of proof]
Since $Y$ is a rational homology sphere, $H^1(Y;\Z) = 0$. Moreover, since $\partial Z = Y$, the map $H^3(Y;\Z)\to H^4(Z,Y;\Z)$ is an isomorphism.
The long exact sequence in cohomology for the pair $(Z,Y)$ reads as follows:
\[
0 \to H^2(Z,Y;\Z) \to H^2(Z;\Z) \to H^2(Y;\Z) \to H^3(Z,Y;\Z) \to H^3(Z;\Z) \to 0.
\]
From Poincar\'e--Lefschetz duality and the universal coefficient theorem, one obtains that all groups involved are finite, and moreover $|H^2(Z,Y;\Z)| = |H^3(Z;\Z)|$ and $|H^2(Z;\Z)|=|H^3(Z,Y;\Z)|$.
This shows that $|H^2(Y)| = p$ is a square, and that the image of the map $H^2(Z;\Z) \to H^2(Y;\Z)$ has order $m = \sqrt p$.

That is, the set of \spinc structures on $Y$ that extend to $Z$ has order $m$, and since $H^2(Y;\Z)$ is cyclic, so is the image of $H^2(Z;\Z)\to H^2(Y;\Z)$.
\end{proof}
\section{Correction terms in Heegaard Floer homology}\label{s:prelim}

Heegaard Floer homology is a family of invariants of 3--manifolds introduced by Ozsv\'ath and Szab\'o \cite{OzsvathSzabo-HF}; in this paper we are concerned with the `plus' version, which associates to a rational homology sphere $Y$ equipped with a \spinc structure $\ft$ a $\Q$--graded $\Z[U]$--module $\HFp\Yt$. Recall that the action of $U$ decreases the degree by 2.

The group $\HFp\Yt$ further splits as a direct sum of $\Z[U]$--modules $\Tp\oplus\HFpred\Yt$, where $\Tp = \Z[U,U^{-1}]/U\cdot\Z[U]$. The degree of the element $1\in \Tp$ is called the \emph{correction term} of $\Yt$, and it is denoted by $d\Yt$.

\begin{thm}[\cite{OzsvathSzabo-absolutely}]
The correction term satisfies the following properties:
\begin{itemize}\itemsep -0,5pt
\item $d(Y,\overline\ft) = d\Yt = -d(-Y,\ft)$, where $\overline\ft$ is the conjugate of $\ft$;
\item if $(W,\fs)$ is a negative definite \spinc $4$--manifold with boundary $\Yt$, then
\[
c_1(\fs)^2 + b_2(W) \le 4d\Yt.
\]
\end{itemize}
In particular $d\Yt$ is invariant under \spinc rational homology cobordisms.
\end{thm}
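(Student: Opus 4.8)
The plan is to derive all three statements from the formal structure of Heegaard Floer homology --- its conjugation and duality symmetries together with the grading-shift formula for maps induced by \spinc cobordisms --- isolating the genuinely Floer-theoretic content in a single nonvanishing statement.

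First I would treat the two symmetries. The conjugation symmetry of Heegaard Floer homology furnishes a grading-preserving isomorphism $\HFp(Y,\overline\ft)\cong\HFp\Yt$ of $\Z[U]$--modules; as it carries the tower $\Tp$ to the tower $\Tp$ and respects the absolute $\Q$--grading, the bottom generators agree in degree, giving $d(Y,\overline\ft)=d\Yt$. For the orientation-reversal identity I would invoke the duality pairing identifying $\HFp\Yt$ with the appropriately graded dual of $\mathrm{HF}^-(-Y,\ft)$; the grading reversal built into this pairing matches the tower of $\HFp\Yt$, which starts in degree $d\Yt$, with the corresponding tower for $-Y$ starting in degree $-d\Yt$, yielding $d\Yt=-d(-Y,\ft)$.

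For the inequality I would puncture $W$ to obtain a cobordism $W_0=W\setminus B^4$ from $S^3$ to $Y$ carrying the restriction of $\fs$, and study the induced map $F^+_{W_0,\fs}\colon\HFp(S^3)\to\HFp\Yt$. By the grading-shift formula this map is homogeneous of degree $\tfrac14\bigl(c_1(\fs)^2-2\chi(W_0)-3\sigma(W_0)\bigr)$, which, using $\chi(W_0)=b_2(W)$ and $\sigma(W_0)=-b_2(W)$ (the latter by negative definiteness), equals $s:=\tfrac14\bigl(c_1(\fs)^2+b_2(W)\bigr)$. The key input is that a cobordism with $b_2^+=0$ induces an isomorphism on $\mathrm{HF}^\infty$; since the tower $\Tp\subseteq\HFp$ is exactly the image of $\mathrm{HF}^\infty$, the square relating the $\mathrm{HF}^\infty$ and $\HFp$ versions of $F_{W_0,\fs}$ forces $F^+_{W_0,\fs}$ to carry the tower of $\HFp(S^3)$ onto the tower of $\HFp\Yt$. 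As the former starts in degree $0$ and $F^+_{W_0,\fs}$ raises degree by $s$, the bottom generator of the tower of $\HFp\Yt$, which sits in degree $d\Yt$, must be the image of a tower element of $\HFp(S^3)$ of degree $\ge 0$; hence $d\Yt-s\ge 0$, that is, $c_1(\fs)^2+b_2(W)\le 4d\Yt$.

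Finally, for invariance under \spinc rational homology cobordisms I would rerun this cobordism-map argument for a cobordism $(V,\fs)$ from $(Y_1,\ft_1)$ to $(Y_2,\ft_2)$ with $H_*(V;\Q)$ that of a product. Here $\chi(V)=\sigma(V)=0$ and $c_1(\fs)^2=0$, so $F^+_{V,\fs}$ preserves the grading; since $b_2^+(V)=0$ the map on $\mathrm{HF}^\infty$ is again an isomorphism, so $F^+_{V,\fs}$ restricts to a grading-preserving surjection of the tower of $(Y_1,\ft_1)$ onto that of $(Y_2,\ft_2)$, whence $d(Y_1,\ft_1)\le d(Y_2,\ft_2)$; applying the same to the reversed cobordism gives the opposite inequality, so $d(Y_1,\ft_1)=d(Y_2,\ft_2)$. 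The main obstacle is exactly the nonvanishing input invoked twice above: that a cobordism with $b_2^+=0$ induces an isomorphism on $\mathrm{HF}^\infty$. This is the substantive result of Ozsv\'ath--Szab\'o on the behaviour of $\mathrm{HF}^\infty$ under cobordisms, which I would use as a black box rather than reprove, letting the degree bookkeeping above carry out the remaining work.
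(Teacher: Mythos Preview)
The paper does not give its own proof of this theorem: it is stated as a citation of Ozsv\'ath and Szab\'o's foundational paper \cite{OzsvathSzabo-absolutely}, and the authors use it as a black box. So there is no proof in the paper to compare your proposal against.

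That said, your sketch is a faithful outline of how the result is established in the original reference: the conjugation and duality symmetries handle the first item, and the second item follows from the grading-shift formula together with the fact that negative definite (more precisely, $b_2^+=0$) cobordisms induce isomorphisms on $\mathrm{HF}^\infty$. Your derivation of the rational homology cobordism invariance from the same circle of ideas is also standard and correct. One small caution: the statement that $b_2^+=0$ cobordisms induce isomorphisms on $\mathrm{HF}^\infty$ requires some care about $b_1$ in general, but in the cases you use it (a punctured negative definite $W$ with rational homology sphere boundary, and a rational homology cobordism) the hypotheses needed in \cite{OzsvathSzabo-absolutely} are met.
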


\begin{cor}\label{c:QHDvsd}
If $W$ is a rational homology ball with boundary $Y$, and $\ft$ is a \spinc structure on $Y$ that extends to $W$, then $d\Yt=0$.
\end{cor}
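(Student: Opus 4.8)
The plan is to deduce the vanishing of $d\Yt$ from the negative-definite inequality in the theorem above, applied to both orientations of $W$. First I would let $\fs$ be a \spinc structure on $W$ restricting to $\ft$ on $Y$, which exists by hypothesis. Since $W$ is a rational homology ball, $b_2(W) = 0$, so $W$ is (vacuously) negative definite, and moreover $H^2(W;\Q) = 0$ forces $c_1(\fs)^2 = 0$. The inequality $c_1(\fs)^2 + b_2(W) \le 4d\Yt$ then reads $0 \le 4d\Yt$, i.e. $d\Yt \ge 0$.

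For the reverse inequality I would reverse orientations. The manifold $-W$ is again a rational homology ball, now with boundary $-Y$, and $\fs$ --- viewed as a \spinc structure on $-W$ under the canonical identification $\Spinc(W)\cong\Spinc(-W)$ --- restricts on $-Y$ to $\ft$. Applying the same inequality to $(-W,\fs)$ gives $0 \le 4d(-Y,\ft)$, and since $d(-Y,\ft) = -d\Yt$ by the first bullet of the theorem, this says $d\Yt \le 0$. Combining the two bounds gives $d\Yt = 0$.

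The two places that merit a sentence of justification --- and this is as close as the argument comes to an obstacle, which is to say not very close --- are the vanishing of $c_1(\fs)^2$ and the orientation bookkeeping. For the former: as $Y$ is a rational homology sphere, $c_1(\fs)$ restricts to a torsion class on $Y$, so it lifts to $H^2(W,Y;\Q)$, and the self-pairing is evaluated through the map to $H^2(W;\Q) = 0$; hence $c_1(\fs)^2 = 0$. The latter is entirely encapsulated by the identity $d\Yt = -d(-Y,\ft)$ already recorded in the theorem, so no separate computation is needed. Thus the corollary is a formal consequence of the theorem, and the proof will be only a few lines.
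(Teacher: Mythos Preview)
Your argument is correct and is exactly the intended one: the paper states the corollary without proof, as an immediate consequence of the theorem, and what you have written is precisely how one unpacks that---apply the inequality to $(W,\fs)$ and to $(-W,\fs)$, using $b_2(W)=0$, $c_1(\fs)^2=0$, and $d(-Y,\ft)=-d\Yt$. There is nothing to add.
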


When $Y$ is obtained as rational surgery along a knot $K$ in $S^3$, one can recover the correction terms of $Y$ in terms of a family of invariants of $K$, first introduced by Rasmussen \cite{Rasmussen-Goda} and then further studied by Ni and Wu \cite{NiWu}, and Hom and Wu \cite{HomWu}. We call these invariants $\{V_i(K)\}_{i\ge 0}$, adopting Ni and Wu's notation instead of Rasmussen's --- who used $h_i(K)$ instead --- as this seems to have become more standard.

We denote with $\unknot$ the unknot.

\begin{thm}[\cite{Rasmussen-Goda, NiWu}]
The sequence $\{V_i(K)\}_{i\ge 0}$ takes values in the non-negative integers and is eventually $0$. Moreover, $V_{i}(K)-1 \le V_{i+1}(K) \le V_i(K)$ for every $i\ge 0$.

For every rational number $p/q$ and for an appropriate indexing of \spinc structures on $S^3_{p/q}(\unknot)$ and $S^3_{p/q}(K)$, we have
\begin{equation}\label{e:NiWu}
d(S^3_{p/q}(K),i) = -2\max\{V_{\lfloor i/q\rfloor}(K),V_{\lceil (p-i)/q\rceil}(K)\} + d(S^3_{p/q}(\unknot),i).
\end{equation}
\end{thm}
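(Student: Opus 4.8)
The plan is to recall the construction of the invariants $V_i(K)$ from the knot Floer complex, deduce their elementary properties directly from that construction, and then extract the correction terms of $S^3_{p/q}(K)$ from Ozsv\'ath and Szab\'o's rational surgery mapping cone formula. First I would set up notation: let $C=\CFKo^\infty(K)$ be the knot Floer complex, a $\Z\oplus\Z$--filtered complex with coordinates $(i,j)$ (the algebraic and Alexander filtrations) on which $U$ acts by translation by $(-1,-1)$. For each $s\in\Z$ I consider the subquotient complexes $A_s^+ = C\{\max(i,\,j-s)\ge 0\}$ and $B^+ = C\{i\ge 0\}\simeq CF^+(S^3)$, together with the vertical projection $v_s^+\colon A_s^+\to B^+$ (forgetting the Alexander filtration) and the horizontal projection $h_s^+\colon A_s^+\to B^+$ (built from the filtered symmetry of $C$). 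On homology, $H_*(B^+)=\Tp$ and the tower summand of $H_*(A_s^+)$ is again $\Tp$; both $v_s^+$ and $h_s^+$ restrict on this tower to multiplication by a power of $U$, and I would \emph{define} $V_s$ and $H_s$ by $v_s^+|_{\Tp}=U^{V_s}$ and $h_s^+|_{\Tp}=U^{H_s}$. The filtered symmetry of $C$ exchanging the two coordinates gives the identity $H_s=V_{-s}$, which will be needed at the very end.

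Next I would establish the elementary properties of $\{V_i\}$. Non-negativity is immediate: $v_s^+$ induces an honest $\Z[U]$--module endomorphism of $\Tp$, and the only such nonzero maps are $U^n$ with $n\ge 0$, since $U^{-1}$ does not act on $\Tp$; integrality holds by the same token. Eventual vanishing follows because, once $s$ exceeds the Seifert genus $g(K)$, the Alexander filtration forces $v_s^+$ to be a quasi-isomorphism, so $V_s=0$ for $s\ge g(K)$. For the monotonicity $V_s-1\le V_{s+1}\le V_s$, I would compare $A_s^+$ and $A_{s+1}^+$, whose defining regions differ by a single diagonal: this yields natural chain maps between them fitting into a triangle with the two vertical projections over $B^+$, in which the intermediate map differs from multiplication by $U$ only by a quasi-isomorphism. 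Chasing $U$--equivariance through this triangle pins the two tower exponents to within one of each other, while the inclusion of regions fixes the monotone direction.

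For the surgery formula I would invoke Ozsv\'ath and Szab\'o's rational surgery formula, identifying $\HFp(S^3_{p/q}(K),i)$ with the homology of the mapping cone $\mathbb{X}^+$ of
\[
D_{p/q}\colon \bigoplus_{s\in\Z}A_{\lfloor s/q\rfloor}^+ \longrightarrow \bigoplus_{s\in\Z} B^+, \qquad D_{p/q} = \sum_s\bigl(v_{\lfloor s/q\rfloor}^+ + h_{\lfloor s/q\rfloor}^+\bigr),
\]
where the vertical component sends the $s$--th summand $A$ to the $s$--th summand $B$, the horizontal component sends it to the $(s+p)$--th summand $B$, and the \spinc structures are indexed by the residue of $s$ modulo $p$. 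Since $V_s=0$ for $s\ge g(K)$ and $H_s=V_{-s}=0$ for $s\le -g(K)$, outside a finite range one of $v_s^+,h_s^+$ is an isomorphism on the tower, which allows cancellation and reduces each \spinc summand to a finite zig-zag whose homology is a single copy of $\Tp$ together with a reduced part. The correction term is the grading of the bottom of that tower. Comparing with the configuration in which every vertical and horizontal map is an isomorphism on the tower — which is exactly the unknot, and therefore computes the lens space value $d(S^3_{p/q}(\unknot),i)$ — the bottom of the surviving tower is pushed down by precisely $2\max\{V_{\lfloor i/q\rfloor},\,H_{\lfloor (i-p)/q\rfloor}\}$. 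Rewriting $H_{\lfloor (i-p)/q\rfloor}=V_{-\lfloor (i-p)/q\rfloor}=V_{\lceil (p-i)/q\rceil}$ via the symmetry then yields the stated formula \eqref{e:NiWu}.

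The hard part will be the bookkeeping in this last step. One must show that, among all the vertical and horizontal maps appearing in the truncated zig-zag for a fixed \spinc structure $i$, the grading of the bottom of the surviving tower is governed \emph{exactly} by the two extremal indices $\lfloor i/q\rfloor$ and $\lfloor (i-p)/q\rfloor$, and that their contributions combine as a maximum rather than a sum — this maximum reflecting that the surviving class must lie simultaneously in the image coming from the left and from the right of the cone. Isolating these two indices, ruling out interference from the intermediate terms, and keeping the absolute $\Q$--gradings consistent with the unknot normalisation so that the difference is genuinely $-2\max\{\,\cdot\,,\,\cdot\,\}$, is the portion that requires real care rather than formal manipulation.
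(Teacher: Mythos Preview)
The paper does not prove this statement: it is quoted as a theorem from \cite{Rasmussen-Goda, NiWu} and used as a black box throughout. Your outline is essentially the argument given in those sources --- Rasmussen's thesis for the elementary properties of the $V_i$ (there called $h_i$), and Ni--Wu's analysis of the Ozsv\'ath--Szab\'o rational surgery mapping cone for the formula~\eqref{e:NiWu} --- and is correct as a sketch.
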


\begin{defn}[\cite{HomWu}]
The minimal index $i$ such that $V_i(K)=0$ is called $\nu^+(K)$.
\end{defn}

In order to know the correction terms of $S^3_{p/q}(K)$ it suffices to know the values of $V_i(K)$ for each $i$ as well as the values of the correction terms of lens spaces.

\begin{prop}[{\cite[Proposition~4.8]{OzsvathSzabo-absolutely}}]
Let $p,q,i$ be integers with $p>q>0$, $\gcd(p,q) = 1$ and $0\le i<p+q$. Denote with $r$ and $j$ the reductions of $p$ and $i$ mod $q$ respectively. Then, for an appropriate indexing of \spinc structures,
\begin{equation}\label{e:Lrecursion}
d(L(p,q),i) = \frac14 - \frac{(p+q-2i-1)^2}{4pq} - d(L(q,r),j).
\end{equation}
\end{prop}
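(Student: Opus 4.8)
The plan is to realise the recursion as the effect of a single two-handle cobordism on correction terms, via the grading-shift formula for maps on $\HFp$. The starting observation is that the term $\frac14 - \frac{(p+q-2i-1)^2}{4pq}$ has exactly the shape of the degree shift $\frac{c_1(\fs)^2 - 2\chi(W) - 3\sigma(W)}{4}$ of a cobordism map, for a negative definite $W$ with $b_2(W)=1$, $\chi(W)=1$, $\sigma(W)=-1$: these values give a shift of $\frac{c_1(\fs)^2 + 1}{4}$, which matches once $c_1(\fs)^2 = -\frac{(p+q-2i-1)^2}{pq}$. The remaining summand $-d(L(q,r),j)$ then looks precisely like the correction term of the other boundary component of $W$, taken with reversed orientation.

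To produce such a cobordism I would use the linear plumbing description of lens spaces from \cite{Neumann}. Writing $p/q = [b_1,\dots,b_k]^{-}$ as a negative continued fraction with all $b_\ell \ge 2$, the lens space $L(p,q)$ is the boundary of the negative definite linear plumbing on the chain with weights $-b_1,\dots,-b_k$. Deleting the first vertex leaves the chain $-b_2,\dots,-b_k$, and the identity $[b_1,\dots,b_k]^{-} = b_1 - 1/[b_2,\dots,b_k]^{-}$ shows that this subchain bounds $L(q,q-r) = -L(q,r)$, where $r \equiv p \pmod q$ (here the orientation-reversal identity $L(q,q-r) = -L(q,r)$ enters, and is what will feed the minus sign). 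The disc bundle of the deleted vertex therefore defines a cobordism $W$ from $-L(q,r)$ to $L(p,q)$ that is a single two-handle attachment; in particular $b_2(W)=1$, $\chi(W)=1$, and $W$ is negative definite, so $\sigma(W)=-1$.

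With $W$ in hand I would argue in three steps. First, I would match the indexings: one checks that the \spinc structures on $L(p,q)$ and $-L(q,r)$ joined by a \spinc structure $\fs$ on $W$ are labelled by $i$ and by $j = i \bmod q$ respectively --- this is exactly the ``appropriate indexing'' of the statement --- and one computes $c_1(\fs)^2$ from the $\Q$--valued intersection form of $W$ relative to its torsion boundary, obtaining $-\frac{(p+q-2i-1)^2}{pq}$. Second, and this is the crux, I would show that the induced map $\HFp(-L(q,r),j) \to \HFp(L(p,q),i)$ is an isomorphism between the towers $\Tp$ in all sufficiently high degrees, so that it carries the bottom of one tower to the bottom of the other and transports the correction term \emph{exactly}, rather than merely giving an inequality. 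Here one uses that lens spaces are $L$--spaces, so $\HFp$ is a single tower $\Tp$ in each \spinc structure, and identifies the relevant map as the nonzero map in the surgery exact triangle of \cite{OzsvathSzabo-HF} attached to the deleted vertex. Third, combining the shift $\frac{c_1(\fs)^2 + 1}{4}$ with $d(-L(q,r),j) = -d(L(q,r),j)$ produces the stated formula.

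The main obstacle is the second step: proving that the cobordism map is a genuine isomorphism on towers, and not a grading-respecting map that could a priori vanish or drop the tower. I expect to handle this by running the surgery exact triangle and combining a parity/dimension count with the $L$--space property to pin down which of its three maps is onto $\Tp$ in high degrees. An alternative that sidesteps the triangle is to prove that the linear plumbing is a \emph{sharp} negative definite filling, so that $d(L(p,q),i) = \max_{\fs}\frac{c_1(\fs)^2 + k}{4}$ over \spinc structures on the plumbing, and then to extract the recursion from the lattice relation between the maximising \spinc structures of the full chain and of the subchain; in that route the bookkeeping of this maximisation becomes the delicate part.
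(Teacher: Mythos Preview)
The paper does not prove this proposition at all: it is quoted verbatim as \cite[Proposition~4.8]{OzsvathSzabo-absolutely} and used as a black box throughout, so there is no in-paper argument to compare your sketch against.

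That said, your sketch is essentially a faithful reconstruction of the original Ozsv\'ath--Szab\'o argument: the recursion is indeed obtained from the two-handle cobordism $W$ corresponding to the first vertex of the linear plumbing, together with the degree-shift formula for cobordism maps. One remark on your ``main obstacle'': you propose to establish that the map on towers is an isomorphism via the surgery exact triangle, but the cleaner route (and the one actually used in \cite{OzsvathSzabo-absolutely}) is to invoke the general fact that for a negative definite cobordism $W$ with $b_1(W)=0$ the induced map $F^\infty_{W,\fs}$ on $\mathrm{HF}^\infty$ is an isomorphism. Since both boundary components are $L$--spaces, $\HFp = \Tp$ on each side and the natural map $\mathrm{HF}^\infty \to \HFp$ is surjective; commutativity with $F^\infty_{W,\fs}$ then forces $F^+_{W,\fs}$ to carry the bottom of one tower to the bottom of the other, giving the equality of correction terms directly. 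This bypasses the rank-counting in the triangle and makes the second step a one-liner. Your alternative via sharpness of the linear plumbing also works and is closer in spirit to \cite{OzsvathSzabo-plumbed}, which the present paper invokes elsewhere (e.g.\ in the proof of \cref{l:dLpq-zero}).
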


\begin{rem}
Notice that in \cite{OzsvathSzabo-absolutely, NiWu, HomWu} the lens space $L(p,q)$ is defined as obtained by doing $p/q$--surgery along the unknot $\unknot$. We adopt the convention that $L(p,q)$ is obtained by doing $-p/q$--surgery along $\unknot$, following \cite{Neumann, Lisca-ribbon} (among others): this explains the sign difference in \cref{e:Lrecursion} with respect to the original.
\end{rem}
\section{Generalities on rational surgeries}\label{s:rational}

In this section we study correction terms of lens spaces; we then turn to surgeries along arbitrary knots in the 3--sphere.

\subsection{Correction terms of lens spaces}\label{ss:dlens}
Recall from~\cite{Lisca-ribbon} that the function $I: \Q_{>1}\to\Z$ is defined in terms of the negative continued fraction expansion as follows.
If $p/q \in \Q$ is larger than 1, we denote with $[a_1,\dots,a_n]^-$, where $a_i\ge 2$ for each $i$, its negative continued fraction expansion, that is $p/q = a_1 - \frac1{a_2-\dots}$.
If $p/q = [a_1,\dots,a_n]^-$, we let
\[
I(p/q) = \sum_{i=1}^n (a_i-3).
\]
Given a rational number $p/q>1$, we associate to $p/q$ the linear plumbing $P(p/q)$ represented by the diagram with $n$ vertices and weights $-a_1,\dots,-a_n$. Its plumbing graph is the following:
\[
\xygraph{
!{<0cm,0cm>;<1cm,0cm>:<0cm,1cm>::}
!~-{@{-}@[|(2.5)]}
!{(1.5,0) }*+{\bullet}="a1"
!{(3,0) }*+{\dots}="a2"
!{(4.5,0) }*+{\bullet}="c1"
!{(1.5,0.4) }*+{-a_1}
!{(4.5,0.4) }*+{-a_n}
"a2"-"a1"
"a2"-"c1"
}
\]
Recall that $P(p/q)$ is the negative canonical plumbing with boundary $L(p,q)$ and that its associated intersection form is negative definite.

\begin{defn}
We say that a rational number $p/q = [a_1,\dots,a_n]^->1$ is \emph{embeddable} if there exists an embedding of the integral lattice associated to $P(p/q)$ in $(\Z^n,-I)$.
\end{defn}

\begin{rem}\label{r:Donaldson2}
We observe here that if $S^3_{p/q}(K)$ bounds a rational homology ball, then $p/q$ is embeddable.
To this end, recall that one can express a rational surgery along a knot $K$ as an integral surgery along a link as follows: consider the negative continued fraction expansion $p/q = [a_1,\dots,a_n]^-$; then $S^3_{p/q}(K)$ is obtained by doing integral surgery on the link shown in \cref{f:rational-to-integral}.
In particular, the intersection form of the corresponding 4--dimensional 2--handlebody is the same as the intersection form of $P(p/q)$, and the claim follows from the proof of \cref{p:Donaldson}.
\end{rem}

\begin{figure}[h]
\labellist
\pinlabel $K$ at 28 49
\pinlabel $\dots$ at 236 51
\pinlabel $a_1$ at 103 100
\pinlabel $a_2$ at 130 93
\pinlabel $a_n$ at 340 93
\endlabellist
\centering
\includegraphics[scale=0.9]{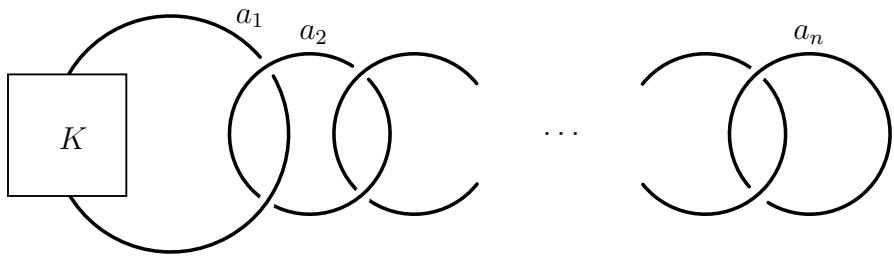}
\caption{The integral surgery picture for $p/q$--surgery along $K$, where $p/q=[a_1,\dots,a_n]^-$.}\label{f:rational-to-integral}
\end{figure}

\begin{lemma}\label{l:Lisca}
If $p/q>1$ is embeddable and $I(p/q)<0$, then $L(p,q)$ bounds a rational homology ball.
\end{lemma}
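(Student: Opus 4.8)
The plan is to recognise the pairs $(p,q)$ appearing in the hypothesis as (orientation-reversed representatives of) the lens spaces that Casson--Harer \cite{CassonHarer} showed to bound rational homology balls, so that the desired filling is already available; this is essentially the content of Lisca's work \cite{Lisca-ribbon}, and I would organise the argument in three steps.

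First, I would recall Lisca's combinatorial description of the embeddable strings. For a string $[a_1,\dots,a_n]^-$ with all $a_i\ge 2$, the associated linear lattice embeds in $(\Z^n,-I)$ precisely when $[a_1,\dots,a_n]^-$ lies in one of finitely many explicit families, each generated recursively by two length-increasing \emph{expansion} moves from short seeds; in particular, for an embeddable $p/q$ the determinant $p$ is a perfect square. Second, I would isolate the sub-family cut out by the condition $I(p/q)<0$. The key here is the classical point rule relating the negative continued fraction expansions of $p/q$ and of $p/(p-q)$: if these have lengths $n$ and $m$ respectively, then $I(p/q)=m-n-1$ (and $I(p/(p-q))=n-m-1$), so $I(p/q)<0$ exactly when the expansion of $p/q$ is at least as long as that of its dual. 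Running this dichotomy through Lisca's list, one checks that the embeddable strings with $I(p/q)<0$ are exactly those whose orientation-reversed lens space $-L(p,q)=L(p,p-q)$ belongs to the Casson--Harer family; this is a finite, if slightly laborious, case analysis over the families in the classification, and requires care with the orientation conventions ($L(p,q)$ versus $L(p,p-q)$, together with the fact that reversing the string inverts $q$ modulo $p$). Third, for each such $(p,q)$ the manifold $L(p,p-q)$ bounds a rational homology ball by Casson--Harer's construction, and hence so does $L(p,q)$, since the orientation reversal of a rational homology ball is again one.

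I expect the middle step to be the main obstacle: the bookkeeping needed to match ``embeddable and $I(p/q)<0$'' with the Casson--Harer family exactly, rather than up to a handful of sporadic cases, is where the real work lies.

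An alternative, more self-contained route would be an induction on $n$. When $I(p/q)<0$ the string contains enough entries equal to $2$ that one can perform a length-decreasing move on the continued fraction which preserves both embeddability and the inequality $I<0$, reducing to a base case such as $4/3=[2,2,2]^-$, for which $L(4,3)=-L(4,1)$ is a Casson--Harer lens space. The delicate point in that approach is to prove that such a reduction move always exists and keeps the string within the embeddable ones.
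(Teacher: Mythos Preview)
Your proposal is essentially correct and very close to the paper's own proof, which is itself just a sketch deferring to specific parts of Lisca's paper \cite{Lisca-ribbon}. In fact, your ``alternative, more self-contained route'' is precisely what the paper cites: Lisca's Theorem~6.4 is the statement that the standard subset associated to an embeddable $p/q$ with $I(p/q)<0$ contracts to that of $4/3$; Section~7 of \cite{Lisca-ribbon} then turns this contraction into an explicit list of continued fraction expansions, and Section~8 verifies directly that each corresponding lens space bounds a rational homology ball. So rather than being an alternative, your inductive reduction to $4/3$ is the organising principle of the paper's (and Lisca's) argument.

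Your ``main'' approach reorders the same ingredients --- use Lisca's classification of embeddable strings, then filter by $I<0$ --- which is fine, since the classification of embeddable strings in the $I<0$ regime is exactly what the contraction yields. One caution: the assertion that the resulting lens spaces coincide exactly with the Casson--Harer family is stronger than what is needed and not quite how Lisca phrases it; in Section~8 he exhibits rational homology ball fillings drawing on several sources, not Casson--Harer alone. You already flag this matching as the delicate step, so there is no real gap, but if you pursue this route you should be prepared to quote Lisca's explicit families rather than trying to force everything into the Casson--Harer list.
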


This result is implicit in Lisca's paper \cite{Lisca-ribbon}; here we adopt his notation.

\begin{proof}[Proof (sketch)]
\cite[Theorem 6.4]{Lisca-ribbon} shows that the \emph{standard subset} associated to $p/q$ with $I(p/q)<0$ can be \emph{contracted} to the standard subset associated to $4/3$; in \cite[Section 7]{Lisca-ribbon} this fact is used to characterise the set of possible continued fraction expansions of $p/q$; in \cite[Section 8]{Lisca-ribbon}, by direct inspection it is shown that each of these lens spaces does in fact bound a rational homology ball.
\end{proof}

\begin{lemma}\label{l:qgem}
Let $p = m^2$ and $q$ be positive integers. If there exists $0\le k < m$ such that
\[
d(L(p,q),k) = d(L(p,q),{k+m}) =  \dots = d(L(p,q),{k+(m-1)m}) = 0,
\]
then $q\ge m-1$. In particular, this holds if $L(p,q)$ bounds a rational homology ball.
\end{lemma}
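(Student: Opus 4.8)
The plan is to feed the recursion \cref{e:Lrecursion} into an elementary counting argument. Note that $\gcd(p,q)=1$, since $L(p,q)$ is a lens space. Suppose, for contradiction, that $q\le m-2$. If $m\le 2$ this already contradicts $q\ge 1$, so we may assume $m\ge 3$; then $1\le q<m^2=p$ and \cref{e:Lrecursion} applies with $r=p\bmod q$. Fix $c\in\{0,\dots,q-1\}$: in \cref{e:Lrecursion} the quantity $d(L(q,r),j)$ depends only on $j=i\bmod q$, so on the set of indices $i$ with $i\equiv c\pmod q$ the function $i\mapsto d(L(p,q),i)$ is the restriction of the single real quadratic
\[
P_c(i)=\tfrac14-d(L(q,r),c)-\frac{(p+q-1-2i)^2}{4pq},
\]
a downward parabola symmetric about $i=\tfrac{p+q-1}{2}$. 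In particular $P_c$ has at most two zeros, and if it has exactly two, say $i_1<i_2$, then $i_1+i_2=p+q-1$.

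Now for the count. By hypothesis the $m$ indices $k,k+m,\dots,k+(m-1)m$ all lie in $[0,p)$ (since $0\le k<m$) and all have vanishing correction term, so each is a zero of the parabola $P_c$ attached to its residue class $c$ modulo $q$. Consequently no residue class modulo $q$ contains more than two of these indices; moreover, if a class $c$ contains exactly two of them, say $k+\ell_1m$ and $k+\ell_2m$, these are the two zeros of $P_c$, so their sum $2k+(\ell_1+\ell_2)m$ equals $p+q-1$, which reduced modulo $q$ gives $2c\equiv p-1=m^2-1\pmod q$. This congruence has at most $\gcd(2,q)$ solutions $c$ modulo $q$; thus our $m$ indices are distributed among the $q$ residue classes with at most two of them in each of at most $\gcd(2,q)$ classes and at most one of them in each of the others, whence
\[
m\le q+\gcd(2,q)\le q+2.
\]
If $q$ is odd, this reads $m\le q+1$, i.e.\ $q\ge m-1$, contradicting $q\le m-2$. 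If $q$ is even, then $m\le q+2$; but $m=q+2$ would force $m$ to be even, contradicting $\gcd(m,q)=1$, so again $m\le q+1$ and $q\ge m-1$, a contradiction. This proves $q\ge m-1$.

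For the last assertion, suppose $W$ is a rational homology ball with $\partial W=L(p,q)$. By \cref{p:extension}, $p=m^2$ is a square, and the \spinc structures on $L(p,q)$ that extend to $W$ form a single orbit of the $H^2(W;\Z)$-action whose image in $H^2(L(p,q);\Z)\cong\Z/p$ has order $m$. The unique order-$m$ subgroup of $\Z/p$ is $\{0,m,\dots,(m-1)m\}$, so under any affine identification of $\Spinc(L(p,q))$ with $\Z/p$ this orbit becomes $\{k,k+m,\dots,k+(m-1)m\}$ for a suitable $0\le k<m$; by \cref{c:QHDvsd} each of these \spinc structures has $d=0$. Thus the hypothesis of the lemma is satisfied, and $q\ge m-1$.

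The main obstacle is the even-$q$ case: the crude estimate ``at most two of our indices per residue class'' only gives $q\ge m/2$, so one has to sharpen it using the symmetry relation $i_1+i_2=p+q-1$ to cut the number of ``doubled'' classes down to $\gcd(2,q)$, after which the coprimality $\gcd(m,q)=1$ is precisely what eliminates the remaining borderline value $m=q+2$.
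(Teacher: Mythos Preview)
Your proof is correct, but it takes a longer route than the paper's argument. Both hinge on the same observation: on each residue class $c$ modulo $q$, the recursion~\eqref{e:Lrecursion} expresses $d(L(p,q),i)$ as a genuine (downward) quadratic in $i$, so at most two of the given $m$ indices can fall in any one class, and a doubled pair must sum to $p+q-1$.

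From here the approaches diverge. You set up a global count: at most $\gcd(2,q)$ classes $c$ can satisfy $2c\equiv m^2-1\pmod q$, so at most that many classes are doubled, giving $m\le q+\gcd(2,q)$; then coprimality rules out $m=q+2$ when $q$ is even. The paper instead exploits the assumption $q\le m-2$ directly to \emph{exhibit} a doubled pair: since $q\le m-1$, both $k$ and $k+qm$ lie among the $m$ vanishing indices and have the same residue modulo $q$, so the sum-of-roots identity forces the single equation $2k+mq+1-m^2-q=0$, which is immediately seen to be negative when $q\le m-2$ and $0\le k<m$. This is a two-line contradiction, with no need for the counting or the parity/coprimality step.

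So your argument is sound and self-contained, but you can shortcut it considerably: once you know the parabola structure, just look at the pair $(k,k+qm)$.
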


\begin{proof}
By \cref{p:extension} and \cref{c:QHDvsd}, if $L(p,q)$ bounds there exists an integer $0\le k_0 < p$ such that $d(L(p,q),k_0+hm) = 0$ for each integer $h$ (where we think of the \spinc structures as being cyclically labelled). 
In particular, we can assume that $k_0=k$ is the remainder of the division of $k_0$ by $m$, and hence $0\le k < m$. 
This shows that the second part of the statement follows from the first.

Suppose now that the assumption hold and that, by contradiction, $q\le m-2$. In this case, $k+qm < p$, and hence
\[ d(L(p,q),k) = d(L(p,q),k+qm) = 0. \]
We now apply \cref{e:Lrecursion} to both correction terms above and subtract them, to obtain
\[
\frac{(k+qm)^2}{pq} - \frac{k^2}{pq} - mq\left(\frac1{pq}-\frac1p-\frac1q\right) = 0,
\]
from which one gets
\[
0 = 2k+mq+1-m^2 - q < 0,
\]
yielding a contradiction.
\end{proof}

\begin{lemma}\label{l:dLpq-bound}
Whenever $q,r$ are coprime and $0<r<q$, for every $0\le j\le q-1$ we have $4|d(L(q,r),j)| \le q-1$. Moreover, if equality is attained then $r$ is either $1$ or $q-1$.
\end{lemma}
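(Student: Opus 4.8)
The plan is to run an induction on $q$ using the recursion \cref{e:Lrecursion}, which expresses $d(L(q,r),j)$ in terms of $d(L(r,s),\ell)$ where $s\equiv q\pmod r$ and $\ell\equiv j\pmod r$. The base case is $q=1$ (or $r=1$): here $L(q,r)$ is $S^3$, $d=0$, and $q-1=0$, so the inequality holds with equality --- but note $r=1$, which is consistent with the ``moreover'' clause. So from the outset I would phrase the statement so that the equality case forces $r\in\{1,q-1\}$, and track, inductively, that equality can only propagate through the recursion in these extremal situations.

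For the inductive step, fix coprime $q,r$ with $0<r<q$ and $0\le j\le q-1$, and write $q = ar + s$ with $0\le s<r$ (in fact $0<s<r$ since $\gcd(q,r)=1$ and $r>1$; the case $r=1$ is the base case) and $j \equiv \ell \pmod r$ with $0\le \ell<r$. The recursion gives
\[
4d(L(q,r),j) = 1 - \frac{(q+r-2j-1)^2}{qr} - 4d(L(r,s),\ell).
\]
The first two terms I would bound by elementary calculus/arithmetic: the quantity $q+r-2j-1$ ranges over the integers and, as $j$ runs over $0,\dots,q-1$, the term $\frac{(q+r-2j-1)^2}{qr}$ is minimized near $j = (q+r-1)/2$ and maximized at the endpoints $j=0$ or $j=q-1$, where it equals $\frac{(q+r-1)^2}{qr}$ or $\frac{(q-r-1)^2}{qr}$. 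So $1 - \frac{(q+r-2j-1)^2}{qr}$ lies in a controlled interval; the worst cases for making $|4d(L(q,r),j)|$ large are (a) $j$ in the ``middle'' so that the squared term is small (pushing $4d$ up toward $1 - 4d(L(r,s),\ell)$), or (b) $j$ at an endpoint so the squared term is as large as possible (pushing $4d$ down). Combining with the inductive bound $4|d(L(r,s),\ell)|\le r-1$, I get $4|d(L(q,r),j)| \le (r-1) + (\text{something} \le q-r \text{ or so})$, and the arithmetic should collapse to $\le q-1$. The key identity to exploit is that $q+r-2j-1$ and $r+s-2\ell-1$ are congruent mod something compatible, letting the two ``defect'' terms partially cancel; I'd expect a clean telescoping once the substitution $q=ar+s$ is made.

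The main obstacle, and where I'd spend the most care, is the equality analysis: showing that $4|d(L(q,r),j)| = q-1$ forces $r=1$ or $r=q-1$. This requires that equality hold simultaneously in the bound on the squared term (forcing $j$ to an endpoint, i.e. $j\in\{0,q-1\}$, giving the squared term $\frac{(q-r-1)^2}{qr}$ or similar) \emph{and} in the inductive bound $4|d(L(r,s),\ell)| = r-1$ (forcing $s\in\{1,r-1\}$ by induction). Then a short computation with $q = ar+s$ and $s\in\{1,r-1\}$, together with $j$ at an endpoint, should pin down $r$ to be extremal. I would handle $r=q-1$ directly (then $s = q \bmod r = 1$, $a=1$, $L(q,q-1)\cong L(q,-1)$ is a lens space bounding a standard disk bundle and $d$-values are easy to write down and check saturate the bound) and $r=1$ via the base case; the content is showing \emph{no other} $r$ works, which is exactly the rigidity in the two simultaneous equalities. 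A secondary subtlety is making sure the indexing of \spinc structures is handled consistently so that ``$0\le j\le q-1$'' on the left matches ``$0\le \ell <r$'' on the right under \cref{e:Lrecursion}; I'd appeal to the fact that the bound is symmetric under $j\mapsto q-1-j$ (conjugation of \spinc structures, via the first bullet of the correction-term theorem), which also explains why both endpoints $j=0$ and $j=q-1$ appear.
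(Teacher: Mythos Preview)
Your overall strategy---induction on $q$ via the recursion \cref{e:Lrecursion}---is the same as the paper's, but there is a genuine gap in the plan: you never invoke the symmetry $r\leftrightarrow q-r$ (equivalently $L(q,r)\cong L(q,q-r)$), and without it your ``the arithmetic should collapse to $\le q-1$'' does not collapse. Concretely, from the recursion and the inductive bound $|4d(L(r,s),\ell)|\le r-1$ you get
\[
-4d(L(q,r),j) \le -1 + \frac{(q+r-1)^2}{qr} + (r-1),
\]
and you need this to be $\le q-1$, i.e.\ $(q+r-1)^2 \le qr(q-r+1)$. For $r=q-1$ this reads $4(q-1)^2 \le 2q(q-1)$, which fails for all $q\ge 3$. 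So the lower bound step breaks for $r$ near $q$, and your vague telescoping hope (``$q+r-2j-1$ and $r+s-2\ell-1$ are congruent mod something'') does not rescue it. The paper fixes this at the outset by reducing to $r<q/2$ via the symmetry, and then the estimate above genuinely closes (after checking the small cases $r=1$ and $(q,r)=(5,2)$ by hand, and assuming $q\ge 6$).

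The same reduction also makes your equality analysis unnecessary. Once $2\le r<q/2$ and $q\ge 6$, the paper's estimates give \emph{strict} inequalities $4d<r<q-1$ and $-4d<q-1$, so equality simply cannot occur for non-extremal $r$; there is no need to propagate equality through the induction or to chase the simultaneous saturation conditions you outline. Your plan to track equality recursively would work in principle, but it is considerably more delicate than needed and, as written, still depends on the broken inductive step for large $r$.
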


\begin{proof}
The statement is symmetric with respect to the involution $r\mapsto q-r$, hence we can assume $r<q/2$. Moreover, if $r=1$ and if $q=5, r=2$ the result can be readily checked from \cref{e:Lrecursion}; hence, we can suppose $q\ge 6$ and $2\le r < q/2$.

We will proceed by induction on $q$. We now apply the recursion for $0\le j<q$:
\[
d(L(q,r),j) = \frac14 - \frac{(q+r-2j-1)^2}{4qr} - d(L(r,s),k)
\]

Notice that the second summand is bounded from above by 0 and from below by $-\frac{(q+r-1)^2}{4qr}$, while the last summand is bounded by $\frac{r-1}4$ in absolute value, by the inductive assumption.

Summing up,
\[
4d(L(q,r),j) \le 1+0+r-1 = r < q-1
\]
and
\begin{align*}
-4d(L(q,r),j) \le& -1 + \frac{(q+r-1)^2}{qr} + r-1 =\\
 = & -1 + \left(\frac {q-2}r + r\right) + 2 + \frac rq - \frac2q+\frac1{qr}-1.
\end{align*}
Now notice that $\frac{q-2}r + r$ is bounded from above by $\frac{2(q-2)}{q-1} + \frac{q-1}2 < \frac{q+3}2$, that $\frac rq < \frac12$, and that $\frac1{qr} < \frac2r$. Therefore,
\[
-4d(L(q,r),j) < \frac q2 + \frac32 + \frac12 = \frac q2 + 2, 
\]
and the latter quantity is bounded by $q-1$ if $q\ge 6$.
%
%
\end{proof}

\begin{lemma}\label{l:dLpq-zero}
Let $p/q$ be an embeddable rational number. Then at least one of the correction terms of $L(p,q)$ vanishes.
\end{lemma}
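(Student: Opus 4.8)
The plan is to exploit the combination of the lattice embedding hypothesis (which gives a subtle but concrete restriction on the continued fraction of $p/q$ via Lisca's work) and the recursive formula \eqref{e:Lrecursion} for correction terms of lens spaces. Since $p/q$ is embeddable, the negative definite plumbing $P(p/q)$ embeds in $(\Z^n,-I)$, and the heart of \cite[Theorem~6.4 and Section~7]{Lisca-ribbon} is that such $p/q$ admit a very restricted form of continued fraction expansion — roughly, those obtained from $4/3$ by the "inflation" moves studied there. In particular $I(p/q)\le 0$, with $I(p/q)=0$ exactly when $p$ is a square and the expansion is one of a short explicit list, and $I(p/q)<0$ putting us in the situation of \cref{l:Lisca}. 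I would split the argument according to whether $I(p/q)<0$ or $I(p/q)=0$.

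First, if $I(p/q)<0$, then by \cref{l:Lisca} the lens space $L(p,q)$ bounds a rational homology ball; by \cref{p:extension} and \cref{c:QHDvsd} at least one (in fact $m=\sqrt p$ of them, counted with the cyclic labelling) correction term of $L(p,q)$ vanishes, and we are done. So the real content is the boundary case $I(p/q)=0$. Here I would use the classification from \cite[Section~7]{Lisca-ribbon} of the continued fraction expansions with $I(p/q)=0$ that are embeddable: these are exactly the expansions of the shape $[2,\dots,2,3,2,\dots,2,3,2,\dots,2]$-type strings coming from $4/3 = [2,2,2]^-$ (for which $I=0$) together with possibly a tail of $[2,\dots,2]^-$'s (which contribute $-1$ each and would force $I<0$ unless absent) — in other words, when $I(p/q)=0$ the expansion is essentially rigid. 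For such $p/q$ one computes $d(L(p,q),i)$ directly from \eqref{e:Lrecursion}, iterating the recursion down the continued fraction, and checks that the value at the appropriate symmetric index (the self-conjugate \spinc structure, or the one corresponding to the "middle" of the string) is $0$. Concretely, when $p=m^2$ the self-conjugate \spinc structure has $d$ equal to $\frac14 - \frac{(p+q-2i_0-1)^2}{4pq} - d(L(q,r),j_0)$ with $2i_0+1 = p+q$ or close to it, and the embeddability constraint on $q$ is exactly what makes this vanish.

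The main obstacle I anticipate is making the boundary case $I(p/q)=0$ genuinely clean rather than a case check: one needs the precise statement from \cite{Lisca-ribbon} about which continued fractions with $I=0$ are embeddable, and then a uniform evaluation of \eqref{e:Lrecursion} along those fractions. An alternative, possibly slicker route that avoids Lisca's classification entirely: combine \cref{l:dLpq-bound} (which gives $4|d(L(p,q),i)|$ bounded, and constrains when the bound is sharp) with the embedding of $P(p/q)$ directly — an embedding of the linear lattice into $(\Z^n,-I)$ produces, via the associated characteristic covectors, a \spinc structure on $L(p,q)$ bounding a negative definite manifold with $c_1^2 + n$ maximal, which by the correction-term inequality forces $d=0$ for that structure. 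I would try this lattice-theoretic shortcut first, since it parallels the proof of \cref{p:Donaldson}, and fall back on the $I<0$ / $I=0$ dichotomy only if the sharp-case analysis of \cref{l:dLpq-bound} does not immediately deliver a vanishing $d$.
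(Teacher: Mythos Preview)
Your main approach has a genuine error at the very first step: embeddability of $p/q$ does \emph{not} imply $I(p/q)\le 0$. For instance, $p/q = 9/1$ has expansion $[9]^-$, so $I(9/1)=6>0$, and the rank-one lattice $\langle -9\rangle$ embeds in $(\Z,-I)$ via $1\mapsto 3$. More generally, $m^2/1$ is always embeddable with $I = m^2-3$, which is positive once $m\ge 2$. Lisca's structural results in \cite{Lisca-ribbon} classify embeddings satisfying \emph{additional} constraints (those forced when $L(p,q)$ actually bounds a rational homology ball); embeddability alone is a much weaker condition and gives no control on $I$. So the dichotomy $I<0$ versus $I=0$ never gets off the ground, and the entire case analysis collapses.

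Your alternative route, however, is exactly what the paper does---and it works cleanly, with one sharpening. The embedding of $L=H_2(P(p/q);\Z)$ into $D=(\Z^n,-I)$ makes the vector $s=e_1+\dots+e_n$ a characteristic covector for $L$ with $s^2=-n$; since $s$ maximises the square among characteristic vectors in $D$, it also maximises it in the coset $s+L$. The point you are missing is that the Ozsv\'ath--Szab\'o correction-term \emph{inequality} only yields $4d\ge c_1^2+n$, hence $d\ge 0$, which is not enough. What the paper invokes instead is the \emph{equality} \cite[Corollary~1.5]{OzsvathSzabo-plumbed} for negative definite plumbings with no bad vertices (and linear plumbings have none): $d(-Y,\ft)=\tfrac14(\max c_1^2(\fs)+n)$, the maximum taken over \spinc structures on $P(p/q)$ restricting to $\ft$. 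Plugging in $s$ gives $d(-Y,\ft_s)=\tfrac14(-n+n)=0$, hence $d(Y,\ft_s)=0$ as well. So your ``shortcut'' is the right proof once you replace the inequality by the sharp plumbing formula; the Lisca-based approach should be abandoned.
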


\begin{proof}
Let $[a_1,\dots,a_n]^-$ be the negative continued fraction expansion of $p/q$.

The plumbing $P = P(p/q)$ has no bad vertices in the sense of~\cite{OzsvathSzabo-plumbed}, hence the correction terms of its boundary can be computed in terms of the squares of the characteristic vectors in $H_2(P)$. More specifically, it follows from~\cite[Corollary 1.5]{OzsvathSzabo-plumbed} that
\begin{equation}\label{e:plumbingd}
d(-Y,\ft) = \frac14(\max c_1^2(\fs)+n)
\end{equation}
where the maximum is taken over all \spinc structures on $P$ whose restriction to $Y$ is $\ft$.

Fix an embedding of the intersection lattice $L\cong H_2(P;\Z)$ of $P$ in the negative definite diagonal lattice $D = (\Z^n,-I)$, with a fixed orthonormal basis $\{e_1,\dots,e_n\}$. The vector space $L_\Q := L\otimes\Q = D\otimes\Q$ comes with a natural scalar product, and there is an embedding $\iota$ of the dual lattice $L^*$ into $L_\Q$ that turns evaluations into products. That is, for every $\lambda\in L^*$ and $l\in L$ we have
\[
\lambda(l) = \iota(\lambda)\cdot l
\]
The sum $s = e_1+\dots+e_n\in L_\Q$ is in $L^*$, and it is a characteristic covector (since it is a characteristic vector in $D$); $s$ also maximises the norm among characteristic vectors in $D$, and in particular it maximises the norm in the set $s + L$.
By \cref{e:plumbingd} the associated \spinc structure on $-Y$ has correction term $(s^2+n)/4 = 0$.
\end{proof}

\begin{lemma}\label{l:congruence}
If $p=m^2$ is a square and $d(L(p,q),i)$ is an integer, then $2i+1\equiv q\pmod m$.
\end{lemma}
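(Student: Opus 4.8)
The plan is to exploit the recursion \cref{e:Lrecursion} together with the base-case formula for $d(L(q,r),j)$, and to track when these rational numbers can possibly be integers. Write $p = m^2$ and let $r$ be the reduction of $p$ modulo $q$ and $j$ the reduction of $i$ modulo $q$. By \cref{e:Lrecursion},
\[
d(L(p,q),i) = \frac14 - \frac{(p+q-2i-1)^2}{4pq} - d(L(q,r),j),
\]
so $d(L(p,q),i)\in\Z$ forces the rational number $\frac14 - \frac{(p+q-2i-1)^2}{4pq}$ to differ from $d(L(q,r),j)$ by an integer. The key observation is that $d(L(q,r),j)$ is itself, by induction on the number of steps in the continued fraction expansion, a rational number whose denominator divides $q$ (one sees from the recursion that the denominators appearing are always products of partial quotients, and in the final expression everything collapses onto a denominator dividing $q$; more precisely $q\cdot d(L(q,r),j)$ has controlled $2$-adic and other valuations). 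So the integrality of $d(L(p,q),i)$ is governed, modulo $\Z$, purely by $\frac14 - \frac{(p+q-2i-1)^2}{4pq}$ and the denominator-$q$ part of $d(L(q,r),j)$.

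First I would set $A = p+q-2i-1 = m^2 + q - 2i - 1$ and rewrite $\frac14 - \frac{A^2}{4pq} = \frac{pq - A^2}{4pq} = \frac{m^2 q - A^2}{4m^2 q}$. Reducing modulo $1$, the $m$-denominator part of this is controlled by $A^2 \bmod m^2$, i.e.\ by $A \bmod m$ (since $A^2\equiv (A\bmod m)^2$ determines the class of $A^2/m^2$ in $\Q/\Z$ up to terms with denominator dividing $m$). Meanwhile $d(L(q,r),j)$ has denominator dividing $q$, hence is $m$-integral when $\gcd(m,q)=1$, and more generally contributes nothing to the "$m$-part" beyond $\gcd(m,q)$. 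Chasing this through, integrality of $d(L(p,q),i)$ will require $A^2 \equiv 0 \pmod{m}$ after the appropriate normalization — and tightening this, that $A \equiv 0$ or the exact congruence $A \equiv q \pmod m$ up to the factor of $4$ in the denominator. Substituting back $A = m^2 + q - 2i - 1 \equiv q - 2i - 1 \pmod m$, the condition $A\equiv 0\pmod m$ becomes $2i + 1 \equiv q \pmod m$, which is exactly the claim.

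The main obstacle I anticipate is handling the factor of $4$ in the denominator $4pq$ cleanly, i.e.\ keeping track of the $2$-adic valuation: one must rule out the possibility that a "$\tfrac12$" or "$\tfrac14$" surviving from $\frac14 - \frac{A^2}{4pq}$ gets cancelled against a half-integer contribution from $d(L(q,r),j)$ in a way that relaxes the congruence mod $m$ to a congruence mod $m/2$ or $\gcd(m,q)$. To deal with this I would either (a) invoke \cref{l:dLpq-bound} and the parity/structure of the known formulas to pin down $4\,d(L(q,r),j)$ modulo $q$ precisely, or (b) argue directly that $4\,d(L(p,q),i) - 4\,d(L(q,r),j) = 1 - \frac{A^2}{pq}\cdot\frac{p}{1}$... — more carefully, multiply through by $pq = m^2 q$ and analyze $m^2 q \cdot d(L(p,q),i) \in \Z$ (which holds since $q\,d(L(p,q),i)$ has denominator dividing $m^2$, and vice versa), extracting the congruence $2i+1\equiv q\pmod m$ as the condition that the numerator $m^2 q - A^2 - 4m^2 q\, d(L(q,r),j)$ is divisible by $4m^2 q$. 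The divisibility by $m$ of $m^2 q - A^2$ is the heart of it, and it immediately gives $A\equiv 0\pmod m$, hence $2i+1\equiv q\pmod m$. I expect no difficulty beyond bookkeeping once the denominator structure of $d(L(q,r),j)$ is made explicit.
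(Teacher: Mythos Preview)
Your overall strategy coincides with the paper's: apply the recursion \eqref{e:Lrecursion}, multiply through by a suitable multiple of $q$, use a bound on the denominator of $d(L(q,r),j)$, and conclude that $m^2$ divides (a small multiple of) $A^2$ with $A=m^2+q-2i-1$, hence $m\mid A$. The paper does exactly this, multiplying by $12q$.

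The gap is precisely the ingredient you flagged as an obstacle but did not resolve: you need a concrete denominator bound for $d(L(q,r),j)$, and your stated one is false. The claim ``the denominator of $d(L(q,r),j)$ divides $q$'' already fails for $d(L(2,1),0)=-\tfrac14$ and $d(L(3,1),1)=\tfrac16$. The naive induction you sketch does not close up: from $d(L(q,r),j)=\tfrac14-\tfrac{B^2}{4qr}-d(L(r,s),k)$ and a hypothesis on the denominator of $d(L(r,s),k)$ in terms of $r$, you cannot clear denominators by multiplying by $q$ alone, since $r\nmid q$ in general. Your option~(a), invoking \cref{l:dLpq-bound}, gives a size bound, not a denominator bound, so it does not help. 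Your option~(b) only works once you already know something like $2q\cdot d(L(q,r),j)\in\Z$, which is exactly the missing input.

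The paper supplies this input externally: it quotes the Dedekind--Rademacher sum formula $d(L(q,t),j)=2s(t,q;j)+s(t,q)-\tfrac1{2q}$ from \cite{Jabuka-HFcorr}, together with the integrality facts $6q\,s(t,q)\in\Z$ and $12q\,s(t,q;j)\in\Z$, to get $12q\cdot d(L(q,r),j)\in\Z$. Multiplying the recursion by $12q$ then gives that $3A^2/m^2\in\Z$, hence $m\mid A$, hence $2i+1\equiv q\pmod m$. If you replace your incorrect ``denominator divides $q$'' with this cited fact (or the sharper $2q\cdot d(L(q,r),j)\in\Z$, also in \cite{Jabuka-HFcorr}), your argument goes through and is essentially identical to the paper's.
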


\begin{proof}
Recall that correction terms of lens spaces can be computed in terms of Dedekind and Dedekind--Rademacher sums; we only need two properties of these sums, and we refer the reader to \cite{Jabuka-when, Jabuka-HFcorr} for precise definitions and proofs.

More specifically, it is shown in \cite[Theorem 1.2]{Jabuka-HFcorr} that
\begin{equation}\label{e:DRsum}
d(L(q,t),j) = 2s(t,q;j) + s(t,q) -\frac1{2q}
\end{equation}
where $s(t,q;j), s(t,q)$ are two rational numbers such that $6qs(t,q)$ and $12qs(t,q;j)$ are both integers (see \cite[Section 2]{Jabuka-when}, where the notation for $s(t,q;j)$ is $r_j(t,q)$). In particular, $12qd(L(q,t),j)$ is an integer for each $j$.

We apply \cref{e:DRsum} setting $t = p$ and $j$ equal to the reduction of $i$ modulo $q$: if $d(L(p,q),i)$ is an integer, then so is $12qd(L(p,q),i)$. However, applying \cref{e:Lrecursion},
\[
12qd(L(p,q),i) = 3q - 3\frac{(p+q-2i-1)^2}p - 12qd(L(q,t),j),
\]
and this implies that $3\left(\frac{p+q-2i-1}m\right)^2$ is an integer, hence that $m$ divides $m^2+q-2i-1$, which is equivalent to the thesis.
\end{proof}

\begin{lemma}\label{l:integrality}
If $p=m^2$ and $d(L(p,q),i)\in\Z$ then $d(L(p,q),i+m)\in\Z$. Moreover, if $d(L(p,q),i)$ and $d(L(p,q),i')$ are both integers, then $m\mid(i-i')$.
\end{lemma}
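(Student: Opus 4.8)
The plan is to deduce both assertions from the quasi--periodicity of lens space correction terms together with \cref{l:congruence}. The key computational input is that, with the cyclic labelling and for every index $j$,
\[
d(L(p,q),j+q)-d(L(p,q),j)\equiv\frac{(p+q-2j-1)^2-(p+q-2j-2q-1)^2}{4pq}\pmod{\Z},
\]
with equality whenever $0\le j<p$; this follows by applying \cref{e:Lrecursion} to $j$ and to $j+q$ — whose reductions modulo $q$ coincide, so the two contributions of the smaller lens space cancel — and then using $p$--periodicity of $d$ for the general case. Summing these telescoping differences over $j=i,i+q,\dots,i+(m-1)q$ yields
\[
d(L(p,q),i+mq)-d(L(p,q),i)\equiv\frac{(p+q-2i-1)^2-(p+q-2i-2mq-1)^2}{4pq}\pmod{\Z},
\]
and, using $p=m^2$, the right--hand side is exactly $(m-q)+\frac{q-2i-1}{m}$.

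For the first assertion I would argue as follows. If $d(L(p,q),i)\in\Z$ then $m\mid q-2i-1$ by \cref{l:congruence}, so the difference just displayed is an integer; hence $d(L(p,q),i+mq)\in\Z$. The same implication applies at $i+mq$, so iterating gives $d(L(p,q),i+kmq)\in\Z$ for all $k$. Since $\gcd(q,m)=1$, the element $mq$ generates the subgroup $m\Z/m^2\Z$ of $\Z/p\Z$; choosing $k$ with $kq\equiv1\pmod m$ gives $i+kmq\equiv i+m\pmod p$, and therefore $d(L(p,q),i+m)\in\Z$.

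For the second assertion, suppose $d(L(p,q),i)$ and $d(L(p,q),i')$ are both integers. By \cref{l:congruence} the integers $x:=\frac{p+q-2i-1}{m}$ and $x':=\frac{p+q-2i'-1}{m}$ are well defined, and subtracting their defining formulas gives $2(i-i')=m(x'-x)$. When $m$ is odd this forces $2\mid x'-x$ and hence $m\mid i-i'$. The remaining, and main, difficulty is the case $m$ even (so $q$ is odd): I would show $x\equiv x'\pmod 2$ by subtracting from one another the identity $12q\,d(L(p,q),i)=3q-3x^2-12q\,d(L(q,r),i\bmod q)$ used in the proof of \cref{l:congruence} (with $r=p\bmod q$) and its analogue for $i'$, obtaining
\[
3(x^2-x'^2)=-12q\bigl(d(L(p,q),i)-d(L(p,q),i')\bigr)-12\bigl(q\,d(L(q,r),i\bmod q)-q\,d(L(q,r),i'\bmod q)\bigr).
\]
The first bracket is an integer by hypothesis, and the second is an integer because any two correction terms of $L(q,r)$ differ by an element of $\frac1q\Z$ — which again follows by telescoping the shift--by--$r$ quasi--periodicity and observing that each difference of squares, divided by $4qr$, lies in $\frac1q\Z$. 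Hence $4\mid 3(x^2-x'^2)$, so $x^2\equiv x'^2\pmod4$ and therefore $x\equiv x'\pmod2$; then $\frac{x'-x}{2}\in\Z$ and $i-i'=m\cdot\frac{x'-x}{2}\in m\Z$, as desired. Throughout, the additive integer ambiguities in the quasi--periodicity identities (both for $L(p,q)$ and for $L(q,r)$) are harmless, since they never affect integrality of the quantities involved.
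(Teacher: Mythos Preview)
Your proof is correct, but it is organised differently from the paper's. Both rely on \cref{l:congruence} and the recursion \cref{e:Lrecursion}, yet the paper packages the computation into a single identity
\[
q\bigl(d(L(p,q),i)-d(L(p,q),i')\bigr)=\frac{(i'-i)(p+q-i-i'-1)}{p}-qd(L(q,r),j)+qd(L(q,r),j'),
\]
obtained by multiplying the recursion by $q$ at $i$ and $i'$ and subtracting. For the first assertion the paper simply plugs in $i'=i+m$: the fraction is an integer by \cref{l:congruence} and the last two terms have integral sum because $2qd(L(q,r),\cdot)\in\Z$ with fixed fractional part; thus $qd(L(p,q),i+m)\in\Z$, and together with $2pd(L(p,q),i+m)\in\Z$ this forces integrality. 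Your route to the same conclusion telescopes $m$ shifts by $q$ to reach $i+mq$, then uses $\gcd(q,m)=1$ to navigate around the subgroup $m\Z/p\Z$ back to $i+m$; this is a valid but longer detour. For the second assertion (case $m$ even) the paper instead sets $i'=i+m/2$ in the displayed identity, multiplies by $2$, and checks that the right-hand side is a genuine half-integer, so $d(L(p,q),i+m/2)\notin\Z$; combined with the first part this pins down the integral correction terms as exactly those with $i\equiv i_0\pmod m$. Your mod-$4$ argument, showing directly that $x\equiv x'\pmod 2$ from $4\mid 3(x^2-x'^2)$, is an equally clean alternative and avoids examining the midpoint. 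In short: same ingredients, different assembly; the paper's single identity makes the first part shorter, while your parity argument for the second part is arguably tidier.
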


The proof of this lemma follows closely the proof of \cite[Corollary 1.8]{Jabuka-HFcorr}.

\begin{proof}
Recall that \cite[Lemma 2.2]{Jabuka-HFcorr} asserts that $2pd(L(p,q),i)$ and $2qd(L(q,r),j)$ are integers for every $i$ and $j$.
We refine this by applying \cref{e:Lrecursion} with $i<p$ and $i+q$: subtracting the two equations and multiplying by $p$ we obtain that
\[
pd(L(p,q),i) \equiv pd(L(p,q),i+q) \pmod 1,
\]
i.e. either $pd(L(p,q),i)$ is integer for each $i$, or it is a half-integer for each $i$.

Consider \cref{e:Lrecursion} for $i$ and $i'$, and let $j$ and $j'$ be the reductions of $i$ and $i'$ modulo $q$; multiplying by $q$ and subtracting the two identities we get:
\begin{equation}\label{e:ii'}
q(d(L(p,q),i)-d(L(p,q),i')) = {\textstyle\frac{(i'-i)(p+q-i-i'-1)}p} - qd(L(q,r),j)+qd(L(q,r),j').
\end{equation}

We are now in position to prove that if $d(L(p,q),i)$ is an integer, then also $d(L(p,q),i+m)$ is.
Set $i' = i+m$ in \cref{e:ii'}.
Notice that $d(L(p,q),i)$ is an integer by assumption, and the last two summands on the right-hand side are either 
both integers or both half-integers, and in either case their sum is an integer. Moreover, $i-i'=m$ by assumption, 
and $m$ divides $p+q-i-i'-1 = p+2-2i-m-1$ by \cref{l:congruence}. It follows that $qd(L(p,q),i')$ is an integer, 
and since $2pd(L(q,p),i')$ is an integer then so is $d(L(p,q),i')$.

The second part of the statement is obvious when $p$ is odd.
Suppose $p$ is even and $d(L(p,q),i)$ is an integer; combining \cref{l:congruence} with the first part of the statement, it is enough to show that $d(L(p,q),i+\frac m2)$ is not an integer.

Let $m=2n$ and plug in $i'=i+n$ in \cref{e:ii'}: multiplying by 2, the left-hand side is congruent to $-2qd(L(p,q),i')$ modulo 1, and the right-hand side is congruent to $2\frac{n(p+q-2i-n-1)}{4n^2} = \frac{p+q-2i-1}{2n} - \frac1{2}\equiv \frac12$ modulo 1.
The first summand is integer by \cref{l:congruence}, hence $2q(L(p,q),i')$ cannot be an integer.
\end{proof}

\subsection{Correction terms of rational surgeries}

\begin{lemma}\label{l:dlens-to-surgery}
Let $p=m^2$. The correction term $d(S^3_{p/q}(K),i)$ is an integer if and only if $d(S^3_{p/q}(\unknot),i)$ is. In particular, if $S^3_{p/q}(K)$ has an integral correction term, there are exactly $m$ integers $i_0, i_0+m,\dots,i_0+(m-1)m$ such that $d(S^3_{p/q}(K),i)\in\Z$.
\end{lemma}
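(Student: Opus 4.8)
The plan is to read off both assertions from the Ni--Wu surgery formula \eqref{e:NiWu} together with the integrality statements for lens spaces in \cref{ss:dlens}, most notably \cref{l:integrality}. For the first assertion I would simply subtract the two instances of \eqref{e:NiWu}: for every $i$,
\[
d(S^3_{p/q}(K),i) - d(S^3_{p/q}(\unknot),i) = -2\max\{V_{\lfloor i/q\rfloor}(K),\,V_{\lceil (p-i)/q\rceil}(K)\},
\]
and since every $V_j(K)$ is a non-negative integer the right-hand side is an integer (indeed a non-positive even one). Hence $d(S^3_{p/q}(K),i)$ and $d(S^3_{p/q}(\unknot),i)$ have the same fractional part, so one lies in $\Z$ precisely when the other does. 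This already reduces the ``in particular'' clause to the analogous count for the unknot.

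For that count, observe that $S^3_{p/q}(\unknot)$ is a lens space; with the conventions of \cref{s:prelim} it is $-L(p,q)$, so its correction terms agree — up to an overall sign and an affine reindexing of $\Z/p\Z$ — with those produced by the recursion \eqref{e:Lrecursion}, and in particular \cref{l:congruence} and \cref{l:integrality} apply to it. Set $S=\{i\in\Z/p\Z : d(S^3_{p/q}(\unknot),i)\in\Z\}$; by the previous paragraph this equals $\{i : d(S^3_{p/q}(K),i)\in\Z\}$, and the hypothesis that $S^3_{p/q}(K)$ has an integral correction term says exactly that $S\neq\emptyset$. Fix $i_0\in S$.

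By \cref{l:integrality}, $S$ is invariant under $i\mapsto i+m$, and any two elements of $S$ differ by a multiple of $m$; being finite, $S$ is then closed under $i\mapsto i-m$ as well. Since $p=m^2$, the subgroup $m\Z/p\Z\le\Z/p\Z$ has order $m$, so the coset $i_0+m\Z/p\Z=\{i_0,\,i_0+m,\,\dots,\,i_0+(m-1)m\}$ has exactly $m$ elements; it is contained in $S$ because $S$ is closed under translation by $m$, and it contains $S$ because every element of $S$ differs from $i_0$ by a multiple of $m$. Hence $S=\{i_0,i_0+m,\dots,i_0+(m-1)m\}$, which is the desired conclusion.

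There is no real obstacle here: the substantive input is entirely in \eqref{e:NiWu} and in \cref{l:integrality}, and the rest is the elementary fact that a non-empty subset of the cyclic group $\Z/p\Z$ that is closed under translation by $m$ and contained in a single $m$-coset must be that coset. The only point requiring a little bookkeeping is to match the labellings of \spinc structures — that is, to check that the labelling of $S^3_{p/q}(\unknot)$ coming from \eqref{e:NiWu} differs from the one used in \eqref{e:Lrecursion} (hence in \cref{l:congruence} and \cref{l:integrality}) only by an affine bijection of $\Z/p\Z$, so that the passage ``$S$ is an $m$-coset'' is both meaningful and correct.
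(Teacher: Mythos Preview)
Your proof is correct and follows the same approach as the paper: the authors' proof simply says to use \eqref{e:NiWu} for the first assertion and then apply \cref{l:integrality} to conclude. You have spelled out the details (the coset argument and the labelling caveat) that the paper leaves implicit, but the substance is identical.
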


\begin{proof}
Using Equation~\eqref{e:NiWu}, we observe that $d(S^3_{p/q}(K),i)$ is an integer if and only if $d(S^3_{p,q}(\unknot),i)$ is, and applying \cref{l:integrality} we conclude the proof.
\end{proof}

\begin{prop}\label{p:Ipq<0}
Let $0<q<p$ be coprime integers with $I(p/q)<0$, and suppose that $S^3_{p/q}(K)$ bounds a rational homology ball; then $\nu_+(K) = 0$.
\end{prop}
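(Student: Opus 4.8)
The plan is to pin down the indices $i$ for which $d(S^3_{p/q}(K),i)=0$ precisely enough to guarantee that one of them satisfies $0\le i<q$; once we have such an index, \eqref{e:NiWu} will instantly yield $V_0(K)=0$, i.e.\ $\nu^+(K)=0$.

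First I would record what the two hypotheses give us. Since $S^3_{p/q}(K)$ bounds a rational homology ball, \cref{p:extension} shows $p=m^2$ and that exactly $m$ \spinc structures extend to the ball; by \cref{c:QHDvsd} these have vanishing correction term, and they form a coset of the unique order-$m$ subgroup of $H^2\cong\Z/m^2$. On the other hand \cref{r:Donaldson2} says that $p/q$ is embeddable, so, combined with $I(p/q)<0$, \cref{l:Lisca} tells us that $L(p,q)$ — equivalently $S^3_{p/q}(\unknot)$, up to orientation, which affects neither integrality nor vanishing of correction terms — also bounds a rational homology ball, so $m$ of \emph{its} \spinc structures have $d=0$ as well.

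Next I would match the relevant sets of indices. By \cref{l:dlens-to-surgery} the index $i$ has $d(S^3_{p/q}(K),i)\in\Z$ if and only if $d(S^3_{p/q}(\unknot),i)\in\Z$, and there are exactly $m$ such indices, forming a coset of the order-$m$ subgroup of $\Z/m^2$. The $m$ extending \spinc structures on $S^3_{p/q}(K)$ have integral (indeed zero) correction term, so by cardinality they are exactly this coset; running the same argument for $S^3_{p/q}(\unknot)$ shows that all of its integral correction terms vanish too. Consequently, for every index $i$ in this common coset we have $d(S^3_{p/q}(K),i)=d(S^3_{p/q}(\unknot),i)=0$, and \eqref{e:NiWu} forces $\max\{V_{\lfloor i/q\rfloor}(K),V_{\lceil (p-i)/q\rceil}(K)\}=0$. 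Taking the representative $i_0$ of the coset with $0\le i_0<m$, \cref{l:qgem} gives $q\ge m-1$, hence $i_0\le m-1\le q$; if $i_0\le q-1$ we are done, since then $\lfloor i_0/q\rfloor=0$ and the above gives $V_0(K)=0$. The only remaining case is $i_0=q$, which forces $m=q+1$, so $p/q=(q+1)^2/q=[q+3,\underbrace{2,\dots,2}_{q-1}]^-$ and $I(p/q)=(q+3-3)+(q-1)(2-3)=1>0$, contradicting the hypothesis.

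The hardest part will be the bookkeeping in the third paragraph: one must be careful that, for \emph{both} $K$ and the unknot, the $m$ extending \spinc structures, the $m$ with $d=0$, and the $m$ with integral $d$ all coincide — this rests on all three being cosets of the same order-$m$ subgroup together with the evident inclusions among them. The borderline case $i_0=q$ (equivalently $m=q+1$) also has to be disposed of by the explicit continued-fraction computation above; everything else is a direct application of the results already established.
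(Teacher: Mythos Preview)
Your proposal is correct and follows essentially the same route as the paper. Both arguments hinge on (i) using \cref{l:Lisca} to see that $L(p,q)$ itself bounds, (ii) invoking \cref{l:qgem} to force $q\ge m-1$, (iii) ruling out $q=m-1$ via the continued fraction $m^2/(m-1)=[q+3,2^{[q-1]}]^-$ with $I=1>0$, and (iv) picking a vanishing index $i_0\in[0,m-1]\subset[0,q-1]$ so that \eqref{e:NiWu} yields $V_0(K)=0$. The only difference is one of presentation: you make the coset-matching step (that the $m$ vanishing correction terms of $S^3_{p/q}(K)$ and of $S^3_{p/q}(\unknot)$ occupy the \emph{same} residue class mod $m$, via the common integrality set from \cref{l:dlens-to-surgery}) fully explicit, whereas the paper compresses this into a single displayed equation; and the paper additionally notes $q\ne m$ by coprimality to get the strict inequality $m-1<q$, while your casework on $i_0=q$ versus $i_0<q$ handles this automatically.
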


\begin{proof}
By \cref{r:Donaldson2}, $p/q$ is embeddable; since $I(p/q)<0$, \cref{l:Lisca} shows that $L(p,q) = S^3_{-p/q}(\unknot)$ bounds a rational homology ball. It follows from \cref{l:qgem} that $q\ge m-1$, where $m^2 = p$. Notice also that $I(\frac{m^2}{m-1})=1>0$, hence the case $p/q = m^2/(m-1)$ is excluded, and we can suppose $q>m$.

By \cref{p:extension}, $m$ equally spaced correction terms of $L(p,q)$ vanish, and there is an $0\le i\le m-1 < q$ such that
\[
0 = d(S^3_{p/q}(K),i) = d(L(p,p-q),i) - 2V_{\lfloor i/q \rfloor} (K) = 0-2V_0(K),
\]
hence $V_0(K)=0$, or, equivalently, $\nu_+(K)=0$.
\end{proof}

\begin{rem}
We observe that the following partial converse to the proposition above holds: if $\nu^+(K) = 0$ and $S^3_{p/q}(K)$ bounds, then also $L(p,q)$ bounds.
In fact, since $\nu^+(K) = 0$, the correction terms of $S^3_{p/q}(K)$ are the same as the correction terms of $-L(p,q) = S^3_{p/q}(\unknot)$.
Since $S^3_{p/q}(K)$ bounds, it has $\sqrt p$ vanishing correction terms, hence so does $\pm L(p,q)$ and Greene has recently shown that this in turn implies that $L(p,q)$ bounds a rational homology ball~\cite{Greene}.
\end{rem}

We are now ready to prove a quantitative version of \cref{t:pqfinite}.

\begin{thm}\label{t:pqbounds}
Let $K$ be a knot in $S^3$ with $\nu^+(K) = \nu$, and $p,q$ be positive, coprime integers with $p=m^2$; suppose that $S^3_{p/q}(K)$ bounds a rational homology ball. Then, if $\nu>0$:
\[
\sqrt{(2\nu-1)q} < m < \frac{q+2 + \sqrt{q^2 + 8q\nu+8}}2,
\]
while if $\nu=0$ then $m\le q+1$.
\end{thm}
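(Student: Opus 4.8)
The plan is to combine \cref{p:extension} and \cref{c:QHDvsd} with the surgery formula \cref{e:NiWu} and the structure of correction terms of lens spaces from \cref{ss:dlens}. Suppose $S^3_{p/q}(K)$ bounds a rational homology ball $W$. By \cref{p:extension} exactly $m$ \spinc structures on $S^3_{p/q}(K)$ extend over $W$, and by \cref{c:QHDvsd} these have vanishing correction term; by \cref{l:dlens-to-surgery} they are precisely the \spinc structures at which $d(S^3_{p/q}(K),\cdot)$ (equivalently $d(S^3_{p/q}(\unknot),\cdot)$) is an integer, and they form an arithmetic progression $i_0,i_0+m,\dots,i_0+(m-1)m$ with $0\le i_0<m$; I will call these the \emph{special} \spinc structures. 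I will repeatedly use that $S^3_{p/q}(\unknot)=-L(p,q)$, so that $d(S^3_{p/q}(\unknot),\cdot)$ is, up to sign, a correction term of a lens space, computable by iterating \cref{e:Lrecursion} and controlled by \cref{l:dLpq-bound}. If $\nu=0$ all $V_i(K)$ vanish, so by \cref{e:NiWu} the correction terms of $S^3_{p/q}(K)$ coincide with those of $-L(p,q)$; hence $L(p,q)$ has $m$ equally spaced vanishing correction terms and \cref{l:qgem} gives $q\ge m-1$, i.e. $m\le q+1$, as desired.

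Assume now $\nu>0$. For the lower bound the plan is to argue by contradiction: suppose $m^2\le(2\nu-1)q$. (The subcase $p\le q$, in which only $V_0(K)$ enters \cref{e:NiWu}, is ruled out by a short direct variant of the argument below; so assume $p>q$.) Then $p/q>1$ is embeddable by \cref{r:Donaldson2}, so by \cref{l:dLpq-zero} some correction term of $L(p,q)$, hence of $S^3_{p/q}(\unknot)$, vanishes; being an integer, the corresponding \spinc structure $i_1$ is special, hence extends over $W$, so $d(S^3_{p/q}(K),i_1)=0$. Feeding $i_1$ into \cref{e:NiWu} and using $d(S^3_{p/q}(\unknot),i_1)=0$ forces $\max\{V_{\lfloor i_1/q\rfloor}(K),V_{\lceil(p-i_1)/q\rceil}(K)\}=0$, i.e. $\lfloor i_1/q\rfloor\ge\nu$ and $\lceil(p-i_1)/q\rceil\ge\nu$. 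These read $\nu q\le i_1<m^2-(\nu-1)q$, an interval which is empty as soon as $m^2\le(2\nu-1)q$ --- a contradiction. Hence $m^2>(2\nu-1)q$.

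For the upper bound, note first that it is equivalent to $m^2-(q+2)m+q-1<2q\nu$ (the smaller root of the corresponding quadratic is negative when $\nu\ge1$), which is automatic unless $m\ge q+2$; so assume $m\ge q+2$. Consider the largest special \spinc structure $j=i_0+(m-1)m\in[p-m,p-1]$. If $\lfloor j/q\rfloor<\nu$ then $m(m-1)\le j<\nu q$, whence $(m-1)(m-1-q)\le(m-1)^2<m(m-1)<\nu q<2q\nu+2$, which is exactly the claimed inequality; so assume $\lfloor j/q\rfloor\ge\nu$, so that $V_{\lfloor j/q\rfloor}(K)=0$ and \cref{e:NiWu} at $j$ gives
\[
d(S^3_{p/q}(\unknot),j)=2V_{\lceil(p-j)/q\rceil}(K)\le 2V_0(K)\le 2\nu .
\]
On the other hand, $p-j=m-i_0$ is pinned modulo $m$ by the integrality constraint \cref{l:congruence} applied to the special \spinc structures, and evaluating $d(S^3_{p/q}(\unknot),j)=-d(L(p,q),\cdot)$ by one or two applications of \cref{e:Lrecursion}, bounding the resulting lower-order lens-space correction term by \cref{l:dLpq-bound}, should produce a lower bound for $d(S^3_{p/q}(\unknot),j)$ of the shape $\tfrac1{2q}\big((m-1)(m-1-q)-O(1)\big)$. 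Comparing with the inequality above then yields $m^2-(q+2)m+q-1<2q\nu$.

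The bulk of the work --- and the main obstacle --- is this last step: extracting a clean lower bound for $d(S^3_{p/q}(\unknot),j)$ at the extremal special \spinc structure. This will require careful bookkeeping: pinning $i_0$ (hence $m-i_0$) modulo $m$ via \cref{l:congruence}, tracking the reductions modulo $q$ and modulo $p-q$ that appear when iterating \cref{e:Lrecursion}, and splitting into subcases according to the size of $m-i_0$ relative to $q$. It is precisely here that the exact constants in the statement ($q+2$ in the numerator and $8q\nu+8$ under the root) should emerge; the remaining steps are short.
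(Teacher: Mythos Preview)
Your treatment of the case $\nu=0$ and of the lower bound for $\nu>0$ is correct and is essentially the paper's argument (your dismissal of the subcase $p\le q$ is handwavy, but the paper also disposes of it by establishing embeddability and then observing a posteriori that $p>(2\nu-1)q\ge q$).

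The genuine gap is in the upper bound. Your plan---evaluate $d(S^3_{p/q}(\unknot),j)$ at the \emph{largest} special \spinc structure $j$ and bound it above by $2V_0(K)\le 2\nu$---loses too much and produces a bound weaker than the one claimed by a factor of~$2$ in the leading term. Carry out the computation: with $2i_0\equiv q-1\pmod m$ one finds $|m^2+q-2j-1|\in\{m(m-1),m(m-2)\}$, and one step of \cref{e:Lrecursion} together with \cref{l:dLpq-bound} gives
\[
d(S^3_{p/q}(\unknot),j)\ \ge\ \frac{(m-c)^2-q^2}{4q}\qquad(c\in\{1,2\}).
\]
Comparing with $d(S^3_{p/q}(\unknot),j)\le 2\nu$ only yields $(m-c)^2\le q^2+8q\nu$, i.e.\ $m\lesssim 2\sqrt{2q\nu}$ rather than the target $m\lesssim\sqrt{2q\nu}$. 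The loss is in the crude estimate $V_{\lceil(p-j)/q\rceil}\le\nu$; no amount of bookkeeping at the extremal $j$ recovers the missing factor, so the promised shape $\tfrac1{2q}\big((m-1)(m-1-q)-O(1)\big)$ is simply not what one obtains.

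The paper's approach sidesteps this. Rather than the extremal special structure, it locates the \emph{threshold} for vanishing: from \cref{e:Lrecursion} and \cref{l:dLpq-bound}, $d(L(m^2,q),i)$ can vanish only when $m^2-mq\le 2i+1-q\le m^2+mq$. One then picks the largest special $i$ \emph{below} this window (namely $2i=m^2-mq+q-1-m$ or $m^2-mq+q-1-2m$, according to parity). For that $i$ the lens-space correction term is integral and nonzero, so \cref{e:NiWu} forces $V_{\lfloor i/q\rfloor}(K)>0$, hence $i<q\nu$. With $2i=m^2-mq+q-1-2m$ this is exactly $m^2-(q+2)m+q-1<2q\nu$, which rearranges to the stated bound. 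The point is that only \emph{nonvanishing} of the lens-space term is used, not a quantitative estimate, and $i$ is chosen so that the resulting $V$-index is as large as possible while still forced to be $<\nu$.
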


\begin{proof}
We treat the case $\nu = 0$ first. If $\nu = 0$, then the correction terms of $S^3_{p/q}(K)$ are the same as the correction terms of $-L(p,q)$, and \cref{p:extension} implies that there exists $0\le k<m$ such that
\[
d(L(p,q),\ft_k) = d(L(p,q),\ft_{k+m}) =  \dots = d(L(p,q),\ft_{k+(m-1)m}) = 0.
\]
It follows from \cref{l:qgem} that $q\ge m-1$.

We now treat the case $\nu>0$.
We first rule out the possibility that $p=1$; in fact, if $p=1$, then $S^3_{p/q}(K)$ is an integral homology sphere, and by~\eqref{e:NiWu} its unique correction term is $-2V_0(K) \neq 0$.
In particular, it never bounds a rational homology ball.

We can now suppose $p>1$.
We begin by proving the left-hand side inequality: if $p/q>1$, by \cref{r:Donaldson2} we know that $p/q$ is embeddable.

If $p<q$, we can write\footnote{In \cref{ss:dlens} we defined negative continued fraction expansions only for $p/q>1$ and we required that all terms in the expansion be larger than 1.
We make an exception here to treat the case $p/q<1$.}
$p/q = [1,2,\dots,2,a_1+1,\dots,a_\ell]^-$ for some $a_i\ge 2$.
The plumbing graph associated to this continued fraction expansion is negative definite and the intersection form of the plumbing embeds, according to \cref{r:Donaldson2}. 
By successively blowing down we obtain an embedding of $P(p/q')$, where $q' = [a_1,\dots,a_\ell]^-$, and 
$0 < q' = q-\lfloor q/p \rfloor p <  p$. In particular, $p/q'$ is embeddable and $L(p,q') = L(p,q)$; by extension, 
we say that also $p/q$ is embeddable.

Regardless of whether $p<q$ or $p>q$, it follows from \cref{l:dLpq-zero} that at least one of the correction terms of $S^3_{p/q}(\unknot)$ vanishes; choose $i_0$ such that $d(S^3_{p/q}(\unknot),i_0) = 0$.
By \cref{l:dlens-to-surgery}, the values of $i$ for which $d(S^3_{p/q}(\unknot),i)$ is an integer are all integers of the form $i = i_0+km$ between $0$ and $m-1$.

Since $S^3_{p/q}(K)$ bounds a rational homology ball, by \cref{p:extension} there are $m$ \spinc structures on $S^3_{p/q}(K)$ with vanishing correction terms. In particular, these have to be the $m$ \spinc structures with integral correction term, i.e. the ones labelled with $i_0+km$.

We now apply \cref{e:NiWu} with $i=i_0$:
\begin{align*}
0 = d(S^3_{p/q}(K),i_0) &= -2\max\{V_{\lfloor i_0/q\rfloor}(K), V_{\lceil (p-i_0)/q \rceil}(K)\} + d(S^3_{p/q}(\unknot),i_0) =\\
 &= -2\max\{V_{\lfloor i_0/q\rfloor}(K), V_{\lceil (p-i_0)/q \rceil}(K)\},
\end{align*}
hence $V_{\lfloor i_0/q\rfloor}(K) = V_{\lceil (p-i_0)/q \rceil}(K) = 0$. These equalities imply that $i_0 \ge q\nu$ 
and $p-i_0 > (\nu-1)q$ respectively, and summing them yields $p > (2\nu-1)q$.

Notice that this implies, a posteriori, $p>q$.

We now turn to the right-hand side inequality, keeping in mind that we can assume $p>q$. From \cref{e:Lrecursion}
\[
d(L(m^2,q),i) = \frac14 - \frac{(m^2+q-2i-1)^2}{4m^2q} - d(L(q,r),j).
\]
Hence, applying \cref{l:dLpq-bound}, $d(L(m^2,q),i)$ can vanish only if $(m^2+q-2i-1)^2 \le |m^2q(4d(L(q,r),j)+1)| \le m^2q^2$, that is only if
\[
m^2-mq \le 2i+1-q \le m^2+mq.
\]

Moreover, by \cref{l:congruence}, if $d(L(m^2,q),i)$ is integral then $2i+1-q\equiv 0\pmod m$, hence if $2i$ is one of $m^2-mq+q-1-m$ or $m^2-mq+q-1-2m$, the corresponding correction term of $L(m^2,q)$ is integral but does not vanish.

If $S^3_{p/q}(K)$ bounds a rational homology ball, then for each $i<m^2-mq-q-1$ for which the corresponding \spinc structure extends, Equation~\eqref{e:NiWu} shows that the invariant $V_{\lfloor i/q\rfloor}(K)$ is nonzero, and in particular $\lfloor \frac iq\rfloor < \nu$. Thus, the following inequality holds
\[
\frac{m^2-mq+q-1-2m}2 = \min\left\{\frac{m^2-mq+q-1-2m}2,\frac{m^2-mq+q-1-m}2\right\} < q\nu
\]
from which the right-hand side inequality follows.
\end{proof}

\begin{rem}
We can gain something more from the right-hand side inequality if we assume that $m^2\not\equiv \pm1\pmod q$. In this case, we know that the inequality coming from \cref{l:dLpq-bound} is strict, hence we can use $m^2-mq+q-1$ instead of $m^2-mq+q-1-2m$, hence obtaining
\[
m < \frac{q+1 + \sqrt{(q-1)^2+8q\nu+4}}2
\]
However, this assumption is not particularly harmful, since whenever $m^2\equiv \pm1\pmod q$
 the correction terms of $L(m^2,q)$ can be computed explicitly by applying the symmetry formula once, together with the computation of $d(L(q,1),j)$.
\end{rem}

\begin{cor}
If $p\equiv1\pmod q$, then $S^3_{p/q}(T_{3,2})$ bounds a rational homology ball if and only if $p=(q+1)^2$ or $p/q = 25/3$.
\end{cor}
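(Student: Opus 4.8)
The plan is to combine \cref{t:pqbounds}, the congruence $p\equiv 1\pmod q$, and the obstruction of \cref{p:Ipq<0} to pin down the pair $(p,q)$, and then to construct rational homology balls for the two surviving families. I work under the standing assumption $q\ge 2$ (tacit in the statement: $S^3_9(T_{3,2})$ does bound a rational homology ball, by \cref{t:kq+1}, so $q=1$ has to be excluded).

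I would start from the well-known equality $\nu^+(T_{3,2})=1$ --- indeed $V_0(T_{3,2})=1$ and $V_i(T_{3,2})=0$ for every $i\ge 1$. Suppose $S^3_{p/q}(T_{3,2})$ bounds a rational homology ball. Then $p=m^2$ by \cref{p:extension}, and \cref{t:pqbounds} with $\nu=1$ gives
\[
\sqrt q< m<\frac{q+2+\sqrt{q^2+8q+8}}{2}.
\]
Since $\sqrt{q^2+8q+8}<q+4$, the right-hand side is below $q+3$, so $\sqrt q< m\le q+2$; in particular $q<m^2=p$, and since $p\equiv 1\pmod q$ also $\gcd(p,q)=1$.

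For the ``only if'' direction, write $p=kq+1$, where $k\ge 1$ because $m^2>q$. Then $p/q=k+\tfrac1q=[k+1,2,\dots,2]^-$ with $q-1$ entries equal to $2$, so
\[
I(p/q)=(k+1-3)+(q-1)(2-3)=k-q-1.
\]
Since $\nu^+(T_{3,2})=1\neq 0$, \cref{p:Ipq<0} rules out $I(p/q)<0$; hence $k\ge q+1$, so $m^2=kq+1\ge q^2+q+1>q^2$, giving $m\ge q+1$. Together with $m\le q+2$ this leaves only $m\in\{q+1,q+2\}$. If $m=q+1$ then $p=(q+1)^2$, which indeed satisfies $p\equiv 1\pmod q$. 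If $m=q+2$ then $p=(q+2)^2\equiv 4\pmod q$, so $p\equiv 1\pmod q$ forces $q\mid 3$; as $q\ge 2$, this means $q=3$ and $p/q=25/3$. Hence $S^3_{p/q}(T_{3,2})$ can bound a rational homology ball only if $p=(q+1)^2$ or $p/q=25/3$.

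There remains the converse: $S^3_{(q+1)^2/q}(T_{3,2})$ bounds a rational homology ball for every $q\ge 2$, and so does $S^3_{25/3}(T_{3,2})$. From $(q+1)^2/q=[q+3,2,\dots,2]^-$ (with $q-1$ twos) and $25/3=[9,2,2]^-$ one sees that these are surgeries on $T_{3,2}$ together with a chain of $2$--framed unknots, as in \cref{f:rational-to-integral}; they are Seifert fibred over $S^2$ with three singular fibres, of orders $(2,3,|q^2-4q+1|)$ and $(2,3,7)$ respectively. The only member of either family that is a lens space is $S^3_{25/4}(T_{3,2})\cong L(25,11)$ (the case $q=4$), which bounds a rational homology ball by Lisca's classification~\cite{Lisca-ribbon}. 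For all the remaining, genuinely Seifert fibred manifolds one must exhibit a rational homology ball directly --- by Kirby calculus, or by recognising the manifold as the double branched cover of a ribbon Montesinos link; the coefficient $25/3$ and small values of $q$ are individual checks, while the infinite family $(q+1)^2/q$ requires a construction uniform in $q$. Producing this family of fillings is the step I expect to be the main obstacle; by contrast, the ``only if'' half is essentially a bookkeeping consequence of the results already in hand.
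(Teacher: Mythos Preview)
Your ``only if'' direction is correct and matches the paper's argument line for line: compute $I(p/q)=k-q-1$ from the expansion $[k+1,2^{[q-1]}]^-$, invoke \cref{p:Ipq<0} with $\nu^+(T_{3,2})=1$ to force $k\ge q+1$ (hence $m\ge q+1$), and use the upper bound from \cref{t:pqbounds} to get $m\le q+2$; the case $m=q+2$ collapses to $q\mid 3$. Your observation that the statement tacitly requires $q\ge 2$ is also well taken.

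The gap is exactly where you flag it: the ``if'' direction. You correctly identify that constructing the rational homology balls is the substantive step, but you stop short of doing it. The paper does not produce these fillings by a uniform Kirby-calculus argument; instead it assembles the family from existing literature, case by case:
\begin{itemize}
\item for $q\ge 5$, the negative canonical plumbing of $S^3_{(q+1)^2/q}(T_{3,2})$ is recognised as one of the graphs shown by Park--Shin--Stipsicz \cite{ParkShinStipsicz} to bound a rational homology ball;
\item for $q=4$, the manifold is the lens space $L(25,14)$, which bounds by Lisca \cite{Lisca-ribbon};
\item for $q=3$, a plumbing manipulation together with \cite[Proposition~4.6]{Paolo} shows $-S^3_{16/3}(T_{3,2})$ is rational homology cobordant to $L(4,3)$;
\item for $q=2$, one uses the diffeomorphism $S^3_{9/2}(T_{3,2})\cong -S^3_9(T_{3,2})$ \cite{Mathieu} and \cref{t:kq+1};
\item for $p/q=25/3$, the manifold is on the Casson--Harer list \cite{CassonHarer}.
\end{itemize}
So the ``main obstacle'' you anticipate is handled by citation rather than by a new construction; to complete your proof you should supply these references (or equivalent constructions) rather than attempt a uniform Kirby argument.
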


\begin{proof}
First, we observe that the case $p=1$ is excluded by \cref{t:pqbounds}.

We are now going to prove the ``only if'' direction. Let $p=m^2$, $K = T_{3,2}$, and suppose that $S^3_{p/q}(K)$ bounds a rational homology ball.
Since $m^2/q = [\lceil m^2/q \rceil, 2^{[q-1]}]^-$, we have that $I(m^2/q) = \lceil m^2/q\rceil - q-2$, and hence $I(p/q)<0$ if $m\le q-1$.
Since $\nu^+(K) = 1$, \cref{p:Ipq<0} shows that $m\ge q+1$.

On the other hand, the right-hand side inequality in \cref{t:pqbounds} says that
\[
m < \frac{q+2 + \sqrt{q^2+8q+8}}2 < \frac{q+2 + \sqrt{q^2+8q+16}}2 = q+3,
\]
hence the only possible values for $m$ are $q+1$ and $q+2$. Observe that if $m=q+2$, then the condition $m^2\equiv 1\pmod q$ reads $(q+2)^2 \equiv 1 \pmod q$, from which $q\mid3$.
Therefore, the ``only if'' direction is proved.

When $q\ge 5$, the 3--manifold $S^3_{(q+1)^2/q}(K)$ bounds the negative definite plumbing associated to the graph (see \cref{ss:seifert} below for the details)
\[
 \xygraph{
!{<0cm,0cm>;<1cm,0cm>:<0cm,1cm>::}
!~-{@{-}@[|(2.5)]}
!{(0,1.5) }*+{\bullet}="c"
!{(1.5,1.5) }*+{\bullet}="d"
!{(3,1.5) }*+{\bullet}="e"
!{(4.5,1.5) }*+{\dots}="f"
!{(6,1.5) }*+{\bullet}="x"
!{(7.5,1.5) }*+{\bullet}="g"
!{(1.5,0) }*+{\bullet}="h"
!{(0,1.9) }*+{-2}
!{(1.5,-0.4) }*+{-3}
!{(1.5,1.9) }*+{-2}
!{(3,1.9) }*+{-2}
!{(6,1.9) }*+{-2}
!{(7.5,1.9) }*+{-(q+1)}
!{(4.5,2.5) }*+{\overbrace{\hphantom{--------}}^{q-5}}
"c"-"d"
"d"-"e"
"e"-"f"
"f"-"x"
"d"-"h"
"x"-"g"
}\]
which Park, Shin and Stipsicz have proven to bound a rational homology ball (see \cite{ParkShinStipsicz}, Figure 1, third graph from the bottom). 
When $q=4$, $S^3_{25/4}(K) = L(25,14)$, and this bounds by the work of Lisca \cite{Lisca-ribbon}.

When $q=3$, $-S^3_{16/3}(K)$ bounds the plumbing
\[
 \xygraph{
!{<0cm,0cm>;<1cm,0cm>:<0cm,1cm>::}
!~-{@{-}@[|(2.5)]}
!{(0,1.5) }*+{\bullet}="c"
!{(1.5,1.5) }*+{\bullet}="d"
!{(3,1.5) }*+{\bullet}="e"
!{(4.5,1.5) }*+{\bullet}="f"
!{(1.5,0) }*+{\bullet}="h"
!{(0,1.9) }*+{-2}
!{(1.5,-0.4) }*+{-2}
!{(1.5,1.9) }*+{-3}
!{(3,1.9) }*+{-2}
!{(4.5,1.9) }*+{-2}
"c"-"d"
"d"-"e"
"e"-"f"
"d"-"h"
}\]
and also the plumbing
\[
 \xygraph{
!{<0cm,0cm>;<1cm,0cm>:<0cm,1cm>::}
!~-{@{-}@[|(2.5)]}
!{(0,1.5) }*+{\bullet}="c"
!{(1.5,1.5) }*+{\bullet}="d"
!{(3,1.5) }*+{\bullet}="e"
!{(4.5,1.5) }*+{\bullet}="f"
!{(6,1.5) }*+{\bullet}="x"
!{(7.5,1.5) }*+{\bullet}="g"
!{(1.5,0) }*+{\bullet}="h"
!{(0,1.9) }*+{-2}
!{(1.5,-0.4) }*+{-2}
!{(1.5,1.9) }*+{-1}
!{(4.5,1.9) }*+{-2}
!{(3,1.9) }*+{0}
!{(6,1.9) }*+{-2}
!{(7.5,1.9) }*+{-2}
"c"-"d"
"d"-"e"
"e"-"f"
"f"-"x"
"d"-"h"
"x"-"g"
}
\]

It follows from \cite[Proposition 4.6]{Paolo} that $-S^3_{16/3}(K)$ is rationally homology cobordant to $L(4,3)$, hence it bounds a rational homology ball.

When $q=2$, $S^3_{9/2}(K)$ is known to be diffeomorphic to $-S^3_9(K)$ \cite{Mathieu}, 
hence it bounds a rational homology ball as well (e.g. see \cref{t:kq+1}).
Finally, $S^3_{25/3}(K)$ bounds a rational homology ball, too; this was included in Casson--Harer's list \cite{CassonHarer}, setting $p=3,s=4,k=5$ in their first family.
\end{proof}


\section{Integral surgeries}\label{s:integral}

In this section we study integral surgeries along knots in the 3--sphere, and we then focus on alternating knots and torus knots.

The following is a quantitative version of \cref{p:mvsnu1}.

\begin{thm}\label{p:mvsnu}
Let $m$ be a positive integer, $K\subset S^3$ be a knot, and suppose that $S^3_{m^2}(K)$ bounds a rational homology ball. Let $\nu = \nu_+(K)$. Then
\[
1+\sqrt{1+8\nu} \le 2m < 3+\sqrt{9+8\nu}
\quad
{\textrm or, equivalently, }
\quad
0 \le \frac{m(m-1)}2 - \nu < m.
\]
\end{thm}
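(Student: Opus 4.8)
The plan is to deduce this from the rational case, \cref{t:pqbounds}, by specialising to $q=1$. When $q=1$, the surgery $S^3_{m^2}(K)$ is an integral surgery, $L(m^2,1)$ is the lens space obtained by $-m^2$--surgery on the unknot, and every \spinc structure on it has integral correction term. First I would observe that the case $\nu=0$ is immediate: \cref{t:pqbounds} with $q=1$ gives $m\le q+1 = 2$, and one checks directly that $m=1$ is impossible (the unique correction term of an integral homology sphere $S^3_{1}(K)$ is $-2V_0(K)$, which vanishes iff $\nu=0$; but then $p=1$ is not a perfect square in the relevant sense) so $m\in\{1,2\}$, which is exactly the content of $0\le \tfrac{m(m-1)}2 < m$ when $\nu=0$.

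For $\nu>0$ the idea is to run the same argument as in the proof of \cref{t:pqbounds} but track the inequalities more carefully in the special case $q=1$, where $d(L(q,r),j)$ degenerates. Concretely: by \cref{l:dLpq-zero} (the linear plumbing $P(m^2/1)$ with single vertex of weight $-m^2$ obviously embeds, so $m^2/1$ is embeddable) at least one correction term $d(S^3_{m^2}(\unknot),i_0)$ vanishes; since $\nu>0$ and $S^3_{m^2}(K)$ bounds, \cref{p:extension} forces all $m$ \spinc structures with vanishing correction term on $S^3_{m^2}(K)$ to be among those — but with $q=1$ \emph{every} structure has integral correction term, so I need a sharper count. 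The key computation is the explicit formula for $d(L(m^2,1),i)$: applying \cref{e:Lrecursion} with $q=1$ (so $r=j=0$ and $d(L(1,0),0)=0$, up to the constant) gives $d(L(m^2,1),i) = \tfrac14 - \tfrac{(m^2-2i)^2}{4m^2}$ for $0\le i< m^2$ (modulo the exact bookkeeping of the indexing and the $\tfrac14$), which vanishes precisely for the indices $i$ with $m^2-2i \equiv \pm m$; equivalently, exactly $m$ equally spaced values. Then \cref{e:NiWu} at $q=1$ reads $d(S^3_{m^2}(K),i) = -2\max\{V_{i}(K), V_{m^2-i}(K)\} + d(S^3_{m^2}(\unknot),i)$, and requiring this to vanish for the $m$ relevant indices forces $V_i(K)=0$ for the smallest such index (giving $i\ge \nu$, hence a lower bound on $m$) and $V_{m^2-i}(K)$ to be nonzero for indices just below a threshold (giving an upper bound on $m$ in terms of $\nu$).

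Carrying out the arithmetic, the smallest index in the vanishing set is $i_{\min} = \tfrac{m^2-m}2$ and one forces $V_{i_{\min}}(K)=0$, giving $\tfrac{m^2-m}2 \ge \nu$, i.e. $\tfrac{m(m-1)}2 \ge \nu$, which rearranges to $1+\sqrt{1+8\nu}\le 2m$; for the upper bound, the index $i$ just below the vanishing set at which the correction term of the unknot surgery is integral-but-nonvanishing forces the corresponding $V$--invariant to be nonzero, hence $\lfloor i/1\rfloor <\nu$, and the relevant value of $i$ is $\tfrac{m^2-m}2 - m$ (or rather one computes it is $< \nu$), giving $\tfrac{m(m-1)}2 - m < \nu$, i.e. $\tfrac{m(m-1)}2 - \nu < m$, which rearranges to $2m < 3+\sqrt{9+8\nu}$. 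The main obstacle I anticipate is \emph{bookkeeping the indexing of \spinc structures} — getting the off-by-$m$ and off-by-one terms exactly right so that the two threshold indices land precisely where claimed — rather than any conceptual difficulty; the hard part is verifying that the $m$ vanishing correction terms of $S^3_{m^2}(\unknot)$ really do start at index $\tfrac{m^2-m}2$ and that the first non-extending \spinc structure below them sits at index $\tfrac{m^2-m}2 - m$, since this is what pins down both inequalities simultaneously. Finally I would note the equivalence of the two displayed formulations is elementary algebra (completing the square), so it suffices to establish $0\le \tfrac{m(m-1)}2 - \nu < m$.
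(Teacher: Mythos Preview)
Your core approach is the paper's own: compute $d(S^3_{m^2}(\unknot),i)=\tfrac{(m^2-2i)^2}{4m^2}-\tfrac14$ explicitly, pin down the $m$ \spinc structures that must extend, and extract the two inequalities from the vanishing of $d(S^3_{m^2}(K),i)$ at $i=\tfrac{m(m-1)}2$ (forcing $V_{m(m-1)/2}(K)=0$) and at $i=\tfrac{m(m-3)}2$ (forcing $V_{m(m-3)/2}(K)=1$). The paper packages these steps as \cref{l:integral2} and \cref{l:vanishingds} rather than framing them as a specialisation of \cref{t:pqbounds}.

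There is, however, a genuine confusion in your write-up that you should fix. You conflate the indices where the unknot's correction term \emph{vanishes} with those where it is \emph{integral}: the quantity $\tfrac{(m^2-2i)^2}{4m^2}-\tfrac14$ vanishes only at the \emph{two} values $i=\tfrac{m(m\pm1)}2$, not at ``$m$ equally spaced values''. What has $m$ elements is the set of indices where this quantity is an \emph{integer}, namely $i=\tfrac{m(m-2k-1)}2$, and the argument works because the $m$ extending \spinc structures must coincide with these. At $i=\tfrac{m(m-3)}2$ the unknot's correction term equals $2$ (not $0$), so vanishing for the $K$--surgery forces $V_{m(m-3)/2}(K)=1>0$. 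Your phrase ``the index just below the vanishing set'' misdescribes the mechanism: $\tfrac{m(m-3)}2$ is \emph{in} the extending set, not below it.

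Two smaller points: your parenthetical that $m=1$ is impossible when $\nu=0$ is wrong (the inequality $0\le 0<1$ holds, so $m=1$ is allowed by the statement); and specialising the lower bound of \cref{t:pqbounds} to $q=1$ only gives $m>\sqrt{2\nu-1}$, which is strictly weaker than $\tfrac{m(m-1)}2\ge\nu$ --- so the direct computation really is needed, as you eventually do.
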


\begin{lemma}\label{l:integral2}
The correction term $d(S^3_{m^2}(K),i)$ is integral if and only if $i = m(m-2k-1)/2$ for some integer $k$.
\end{lemma}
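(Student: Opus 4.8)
The plan is to reduce the statement to the case of the unknot and then settle an elementary divisibility question. First I would invoke \cref{l:dlens-to-surgery}, which guarantees that $d(S^3_{m^2}(K),i)$ is an integer if and only if $d(S^3_{m^2}(\unknot),i)$ is (this is in turn a consequence of \cref{e:NiWu} and \cref{l:integrality}). Thus it suffices to identify the indices $i$ for which the correction terms of $S^3_{m^2}(\unknot)$ are integral; the answer will be independent of $K$, as claimed.

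Next I would write $d(S^3_{m^2}(\unknot),i)$ down explicitly. With the conventions of this paper $S^3_{m^2}(\unknot) = -L(m^2,1)$, and $L(1,0)=S^3$ has vanishing correction term; so applying \cref{e:Lrecursion} with $p=m^2$ and $q=1$ (so that the reductions $r$ and $j$ of $p$ and $i$ modulo $q$ both vanish), together with $d(-Y,\ft)=-d(Y,\ft)$, yields
\[
d(S^3_{m^2}(\unknot),i) = \frac{(m^2-2i)^2}{4m^2}-\frac14 = \frac{(m^2-2i)^2-m^2}{4m^2}.
\]
Hence $d(S^3_{m^2}(\unknot),i)\in\Z$ exactly when $4m^2 \mid (m^2-2i)^2-m^2$.

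Finally I would prove the elementary fact that, setting $t=m^2-2i$, one has $4m^2\mid t^2-m^2$ if and only if $t$ is an odd multiple of $m$: for the ``if'' direction, $t=m(2k+1)$ gives $t^2-m^2=4m^2k(k+1)$; for the ``only if'' direction, $4m^2\mid t^2-m^2$ forces $m^2\mid t^2$ and hence $m\mid t$ (compare prime valuations), and writing $t=ms$ one then gets $4\mid s^2-1$, so $s$ is odd. Since $t=m^2-2i=m(2k+1)$ is equivalent to $i=m(m-2k-1)/2$ (which is automatically an integer, whatever the parity of $m$), this finishes the argument. I do not expect a genuine obstacle: the only things requiring care are that the divisibility characterisation above is exactly tight, and that the sign and indexing conventions of \cref{e:Lrecursion} are tracked correctly so that the displayed formula for $d(S^3_{m^2}(\unknot),i)$ is the right one.
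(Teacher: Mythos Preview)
Your argument is correct and essentially identical to the paper's: both compute $d(S^3_{m^2}(\unknot),i) = \frac{(m^2-2i)^2}{4m^2}-\frac14$ from \cref{e:Lrecursion}, characterise integrality by $m^2-2i$ being an odd multiple of $m$, and transfer the result to general $K$ via \cref{l:dlens-to-surgery}. The only cosmetic differences are that the paper invokes \cref{l:dlens-to-surgery} at the end rather than the beginning, and states the divisibility criterion as $m^2-2i\equiv m\pmod{2m}$ without spelling out the justification you give.
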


\begin{proof}
By a direct computation from \cref{e:Lrecursion},
\begin{equation}\label{e:dLm-1}
d(S^3_{m^2}(\unknot),i) = \frac{(m^2-2i)^2}{4m^2} - \frac14.
\end{equation}
This is an integer if and only if $m^2-2i \equiv m \pmod{2m}$, which in turn is equivalent to the condition $i = m(m-2k-1)/2$ for some integer $k$; the same holds for $S^3_{m^2}(K)$ in light of \cref{l:dlens-to-surgery}.
\end{proof}

Applying~\eqref{e:NiWu} and~\eqref{e:dLm-1}, we obtain:
\begin{equation}\label{e:NiWuZ}
d(S^3_{m^2}(K),i) = -2V_i(K) + \frac{(m^2-2i)^2}{4m^2} - \frac14.
\end{equation}

\begin{lemma}\label{l:vanishingds}
If $S^3_{m^2}(K)$ bounds a rational homology ball, then
\[
d\left(S^3_{m^2}(K), \frac{m(m-2k-1)}2\right) = 0
\]
for each $k=0,\dots, \lfloor \frac{m-1}2 \rfloor$.
\end{lemma}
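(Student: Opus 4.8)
The plan is to play the two available ``exactly $m$'' facts against each other: on one side \cref{p:extension,c:QHDvsd} produce exactly $m$ \spinc structures on $Y := S^3_{m^2}(K)$ whose correction term vanishes, and on the other side \cref{l:integral2} (together with the $q=1$ case of \cref{l:dlens-to-surgery}) identifies exactly $m$ \spinc structures whose correction term is merely \emph{integral}. Since the former set is contained in the latter and they have the same cardinality, they coincide, and vanishing on all integral labels follows at once.

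Concretely, first I would fix a rational homology ball $Z$ with $\partial Z = Y$ and let $E\subseteq\Spinc(Y)$ be the set of \spinc structures that extend over $Z$. By \cref{p:extension} we have $|E|=m$, and by \cref{c:QHDvsd} every $\ft\in E$ satisfies $d(Y,\ft)=0$; in particular each element of $E$ has integral correction term. Next, by \cref{l:integral2} the \spinc structures with integral correction term are exactly those labelled $i=m(m-2k-1)/2$, $k\in\Z$, and by \cref{l:dlens-to-surgery} there are exactly $m$ of them; call this set $I$. Then $E\subseteq I$ with $|E|=|I|=m$, so $E=I$, and hence $d(Y,\cdot)$ vanishes on every \spinc structure with integral correction term.

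Finally I would observe that for $k=0,\dots,\lfloor(m-1)/2\rfloor$ the integers $m(m-2k-1)/2$ are pairwise distinct and lie in $\{0,\dots,m(m-1)/2\}\subseteq\{0,\dots,m^2-1\}$, so they are genuine labels of the type covered by \cref{l:integral2}; the asserted vanishing is then immediate from $E=I$. (The statement is phrased only for these $\lceil m/2\rceil$ labels, but $E=I$ in fact gives it for all $m$ of them; alternatively the conjugation symmetry $d(Y,\overline\ft)=d(Y,\ft)$ pairs the remaining labels with the listed ones.) There is no analytic difficulty here: the one point that needs care is the cardinality bookkeeping that both $E$ and $I$ have exactly $m$ elements, after which the inclusion $E\subseteq I$ upgrades to equality for free.
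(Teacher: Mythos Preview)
Your proof is correct and follows essentially the same approach as the paper: both use \cref{p:extension} and \cref{c:QHDvsd} to produce $m$ \spinc structures with vanishing correction terms, then use \cref{l:integral2} to see that there are exactly $m$ \spinc structures with integral correction term, forcing the two sets to coincide. Your write-up is slightly more explicit about the cardinality bookkeeping and the range of the labels, but the argument is the same.
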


\begin{proof}
It follows from \cref{p:extension} that if $S^3_{m^2}(K)$ bounds a rational homology ball $Z$, then $m$ of its \spinc structures extend to a $Z$;
\cref{c:QHDvsd}, in turn, implies that the corresponding correction terms vanish.
\cref{l:integral2} now pins down exactly which correction terms of $S^3_{m^2}(K)$ can be integral: since there are exactly $m$ of them, they must all vanish.
\end{proof}

\begin{proof}[Proof of \cref{p:mvsnu}]
Notice that the first chain of inequalities is obtained from the second by solving for $m$; hence, we set out to prove the latter.

It follows from \cref{l:vanishingds} the correction term of $S^3_{m^2}(K)$ corresponding to $\frac{m(m-1)}2$ vanishes; if $m\ge 3$, the one corresponding to $\frac{m(m-3)}2$ vanishes, too.

Since the correction term corresponding to $\frac{m(m-1)}2$ vanishes, from~\eqref{e:NiWuZ} we obtain
\[
0 = d\left(S^3_{m^2}(K),\textstyle \frac{m(m-1)}2\right) = -2V_{\frac{m(m-1)}2}(K) + d\left(S^3_{m^2}(\unknot),\textstyle \frac{m(m-1)}2\right) = -2V_{\frac{m(m-1)}2}(K),
\]
from which we obtain the inequality $\frac{m(m-1)}2 \ge \nu$.

Notice now that if $m<3$ then the second inequality is automatically satisfied by $m$, so to prove the remaining part of the statement we can assume $m\ge3$. From the vanishing of the correction term corresponding to $\frac{m(m-3)}2$ from~\eqref{e:NiWuZ} we obtain
\begin{align*}
0 &= d\left(S^3_{m^2}(K),\textstyle \frac{m(m-3)}2\right) = -2V_{\frac{m(m-3)}2}(K) + d\left(S^3_{m^2}(\unknot),\textstyle \frac{m(m-3)}2\right) =
 2-2V_{\frac{m(m-3)}2}(K),
\end{align*}
from which $\frac{m(m-1)}2 - m = \frac{m(m-3)}2 < \nu$.
\end{proof}

We are now ready to prove \cref{p:mvsnu1}.
\begin{proof}[Proof of \cref{p:mvsnu1}]
Let $\nu = \nu^+(K)$. \cref{p:mvsnu} asserts that, if $S^3_{m^2}(K)$ bounds a rational homology ball, then $2m$ is in the interval $I_\nu = [1+\sqrt{1+8\nu}, 3+\sqrt{9+8\nu})$; $I_\nu$ has length
\[
2+\sqrt{9+8\nu}-\sqrt{1+8\nu} = 2 + \frac8{\sqrt{9+8\nu}+\sqrt{1+8\nu}} \le 4,
\]
hence it contains at most two even integers, and, if it contains two, they are consecutive.

Moreover, if $I_\nu$ contains two even integers, then we have
\[
1+\sqrt{1+8\nu} \le 2m < 2m+2 < 3+\sqrt{9+8\nu}.
\]
The left-hand side inequality can be rearranged as $1+8\nu \le (2m-1)^2$, while the right-hand side inequality reads $(2m-1)^2 < 9+8\nu$; that is,
\[
1+8\nu \le (2m-1)^2 < 9+8\nu.
\]
Since odd squares are congruent to 1 modulo 8, we have $(2m-1)^2 = 1+8\nu$.
\end{proof}

\begin{ex}
There exist knots that have two positive, integral surgeries bounding a rational homology ball: if $K$ is the $(q+1,q)$--torus knot, then \cref{t:kq+1} shows that both $S^3_{q^2}(K)$ and $S^3_{(q+1)^2}(K)$ bound rational homology balls.

The second example is in fact realised in an algebro-geometric fashion \cite{55letters}: the curve $C = \{x^{q+1}+y^{q}z\}$ is rational with a unique singularity at $(0,0,1)$, of type $(q+1,q)$; the boundary of an open regular neighbourhood of $C$ is $-S^3_{(q+1)^2}(K)$.
\end{ex}

\begin{rem}
When $m$ is odd and $S^3_{m^2}(K)$ bounds a rational homology ball, we can also infer that $m^2 = 1+8V_0(K)$. In particular, $1+8V_0(K)$ is a perfect square.

This follows immediately from the fact that when $m$ is odd, the \spinc structure labelled with 0 extends to the rational homology ball, hence
\[
0 = d(S^3_{m^2}(K),0) = -2V_0(K) + d(L(m^2,-1),0) = -2V_0(K) + \frac{m^2-1}4.
\]
\end{rem}

\subsection{Alternating knots}

The knot Floer homology of alternating knots is very simple, and it is fully determined by the Alexander polynomial and the signature~\cite{OzsvathSzabo-alternating}; in fact, alternating knots are \emph{Floer-thin}, in the sense that their knot Floer homology is supported on a diagonal with constant $i-j$ (we refer to \cite{OzsvathSzabo-alternating} for the notation).

More generally, \emph{quasi-alternating} knots have been shown to be Floer-thin by Manolescu and Ozsv\'ath \cite{ManolescuOzsvath}; for all quasi-alternating knots, the constant $i-j$ is equal to half of the signature.

\begin{prop}\label{p:alternating}
Let $K$ be a Floer-thin knot supported on the diagonal $i-j = \sigma/2$, and $m$ be an integer such that $S^3_{m^2}(K)$ bounds a rational homology ball. Then $m\le 5$.

Moreover:
\begin{itemize}
\item if $m=1$, $\sigma\ge 0$;
\item if $m=2$, $\sigma\ge -2$;
\item if $m=3$, either $\sigma = -2$ or $\sigma = -4$;
\item if $m=4$, either $\sigma = -6$ or $\sigma = -8$;
\item if $m=5$, $\sigma = -12$.
\end{itemize}

In particular, the statement holds when $K$ is alternating, and $\sigma = \sigma(K)$ is its signature.
\end{prop}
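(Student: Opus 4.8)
The plan is to feed the explicit description of the invariants $V_i$ of thin knots into the vanishing of correction terms provided by \cref{l:vanishingds}. First I would recall that a Floer-thin knot $K$ supported on the diagonal $i-j=\sigma/2$ has $\tau(K)=-\sigma/2$ and
\[
V_i(K)=\max\Bigl\{0,\Bigl\lceil\tfrac{\tau(K)-i}{2}\Bigr\rceil\Bigr\}\qquad\text{for all }i\ge 0,
\]
which follows from Ozsv\'ath--Szab\'o's computation of the knot Floer homology of alternating knots~\cite{OzsvathSzabo-alternating} (see also~\cite{NiWu, HomWu}); here I abbreviate $\tau=\tau(K)$. Then, combining \cref{l:vanishingds} with~\eqref{e:NiWuZ} and the identity $m^2-2i_k=m(2k+1)$ for $i_k=\tfrac{m(m-2k-1)}{2}$, the quantity $\tfrac{(m^2-2i_k)^2}{4m^2}-\tfrac14$ collapses to $k(k+1)$, so that
\[
V_{i_k}(K)=\frac{k(k+1)}{2}\qquad\text{for every }k=0,1,\dots,\Bigl\lfloor\tfrac{m-1}{2}\Bigr\rfloor .
\]

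Next I would turn each of these equalities into a constraint on $\tau$. For $k\ge 1$ one has $V_{i_k}(K)=k(k+1)/2>0$, hence $i_k<\tau$ and $\lceil(\tau-i_k)/2\rceil=k(k+1)/2$; this pins $\tau$ to one of the two consecutive integers $f(k)-1$ and $f(k)$, where
\[
f(k):=i_k+k(k+1)=\frac{m(m-1)}{2}-k(m-k-1).
\]
For $k=0$ the vanishing $V_{i_0}(K)=0$ only yields $\tau\le f(0)=\tfrac{m(m-1)}{2}$.

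The bound $m\le 5$ then comes out of comparing the constraints for $k=1$ and $k=2$: if $m\ge 6$, both indices are admissible, so $\tau$ must lie in $\{f(1)-1,f(1)\}\cap\{f(2)-1,f(2)\}$; but $f(1)-f(2)=m-4\ge 2$, so these two-element sets are disjoint, a contradiction. Hence $m\le 5$. For the remaining part, for each $m\in\{1,2,3,4,5\}$ the admissible indices give the binding constraints $V_0=0$ ($m=1$), $V_1=0$ ($m=2$), $V_0=1$ ($m=3$, since $i_1=0$), $V_2=1$ ($m=4$, since $i_1=2$), and $V_5=1$ together with $V_0=3$ ($m=5$, since $i_1=5$, $i_2=0$); solving each for $\tau$ via the formula above and substituting $\sigma=-2\tau$ produces exactly the five cases listed. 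Since alternating and, more generally, quasi-alternating knots are Floer-thin and supported on the diagonal $i-j=\sigma/2$, the last assertion is then immediate.

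The only genuinely nontrivial ingredient is the thin-knot formula for $V_i$; once it is granted, the rest is elementary bookkeeping. Accordingly, the points that need care are the sign and rounding conventions---it is essential that $\tau=-\sigma/2$ and that the relevant rounding is a \emph{ceiling}, since this is precisely what makes each $\{f(k)-1,f(k)\}$ a two-element set---and the verification that~\eqref{e:NiWuZ}, stated with $V_i$ in place of $\max\{V_i,V_{m^2-i}\}$, indeed applies at the indices $i_k$: this holds because $i_k\le\tfrac{m(m-1)}{2}<\tfrac{m^2}{2}\le m^2-i_k$ and $V$ is non-increasing.
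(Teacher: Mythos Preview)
Your proof is correct and follows essentially the same approach as the paper's: both feed the thin-knot formula $V_i(K)=\max\{\lceil(\tau-i)/2\rceil,0\}$ into \cref{l:vanishingds} and \eqref{e:NiWuZ}, extract the values $V_{i_1}=1$ and $V_{i_2}=3$, and use the resulting constraints on $\tau$ to force $m\le 5$; the case-by-case analysis for $m\in\{1,\dots,5\}$ is identical. The only cosmetic difference is that the paper packages the contradiction for $m\ge 6$ as the inequality $|V_i-V_j|\ge\lfloor|i-j|/2\rfloor$ (yielding $2\ge\lfloor m/2\rfloor$), whereas you phrase it as the disjointness of the two-element sets $\{f(1)-1,f(1)\}$ and $\{f(2)-1,f(2)\}$ once $f(1)-f(2)=m-4\ge 2$; these are equivalent reformulations of the same computation. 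Your remark justifying the use of \eqref{e:NiWuZ} with $V_{i_k}$ rather than $\max\{V_{i_k},V_{m^2-i_k}\}$ is a point the paper leaves implicit.
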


%

\begin{proof}
Combining results from \cite{OzsvathSzabo-alternating, ManolescuOzsvath} as in \cite[Theorem 2]{HomWu}, one has:
\begin{equation}\label{e:alternating}
V_i(K) = \max\left\{\left\lceil \frac{\tau(K) - i}2 \right\rceil,0\right\}.
\end{equation}

Suppose $m\ge5$. According to \cref{l:vanishingds}, the \spinc structures on $S^3_{m^2}(K)$ labelled by $\frac{m(m-2k-1)}2$ for $k=0,1,2$ have vanishing correction term. By~\eqref{e:NiWuZ}, one gets:
\[
V_{\frac{m(m-3)}2}(K) = 1, \quad V_{\frac{m(m-5)}2}(K) = 3
\]
But it follows from \cref{e:alternating} that $|V_i(K)-V_j(K)| \ge \lfloor |i-j|/2\rfloor$, and therefore $2 \ge \lfloor m/2\rfloor$, from which $m\le 5$.

As for the second part of the statement, recall that when $K$ is Floer-thin, one has that $2\tau(K) = -\sigma$, and moreover
\[
\nu^+(K) = \left\{
\begin{array}{ll}
\tau(K) & {\rm if}\, \tau(K) \ge 0\\
0 & {\rm if}\, \tau(K) < 0
\end{array}
\right.
\]
Bearing these in mind:
\begin{itemize}
\item if $m=1$, then $V_0(K)=0$, hence $\tau(K) \le 0$, $\sigma(K) \ge 0$;
\item if $m=2$, then $V_1(K)=0$, hence $\tau(K)\le 1$, $\sigma(K) \ge -2$;
\item if $m=3$, then $V_0(K) = 1$, from which either $\tau(K) = 1$ or $\tau(K) = 2$;
\item if $m=4$, then $V_2(K) = 1$, from which either $\tau(K) = 3$ or $\tau(K) = 4$;
\item if $m=5$, then $V_0(K) = 3$ and $V_5(K)=1$, from which $\tau(K) = 6$.\qedhere
\end{itemize}
\end{proof}

\begin{rem}
Notice that \cref{t:kq+1} shows that all cases with $\sigma < 0$ above are realised; the cases with $\sigma = 0$ are realised by the unknot. Finally, Fintushel and Stern \cite{FintushelStern} have proved that $S^3_{+1}(T_{3,-2})$ bounds a rational homology ball, hence showing that the pair $(\sigma,m) = (2,1)$ is realised, too.
\end{rem}

\subsection{Torus knots}

Now we can prove \cref{t:pvs9q}. A second proof will be given in \cref{ss:seifert}.

\begin{proof}[First proof of \cref{t:pvs9q}]
The case of integral surgeries along alternating torus knots (i.e. $T_{2k+1,2}$) has already been treated in \cref{p:alternating}, hence we will focus on the case $q>2$.

Recall that for a torus knot $T_{p,q}$ the function $V(i) = V_i(T_{p,q})$ is related to the gap counting function $I_\Gamma$ of the semigroup $\Gamma = \langle p,q\rangle \subset \Z_{\ge0}$ generated by $p$ and $q$ as follows.
Let $\nu = \nu^+(T_{p,q})$, and recall that $\nu = g(T_{p,q}) = \frac{(p-1)(q-1)}2$. The function $I_\Gamma$ is defined as
\[
I_\Gamma(j) = \#(\Z_{\ge j}\setminus \Gamma);
\]
Borodzik and Livingston \cite{BorodzikLivingston} proved that
\[
V(j) = I_\Gamma(j+\nu).
\]
In particular, $V(j) = 1$ exactly when $j+\nu$ varies between the second largest gap of the semigroup and its largest gap, i.e. between the last two elements that are not expressible as a nonnegative integer combination of $p$ and $q$. Similarly, $V(j)=2$ between the second and third largest gaps.
The largest gap is well-known to be $g_1 = pq-p-q = 2\nu-1$.
A graph of the function $V(\cdot)$ is sketched in \cref{f:graphVTpq}.

\begin{figure}[h]
\labellist
\pinlabel $V_i(T_{p,q})$ at 33 174
\pinlabel $i$ at 320 15
\pinlabel $\dots$ at 42 24
\pinlabel $\vphantom{1}_{\nu-9q}$ at 76 18
\pinlabel $\vphantom{1}_{\nu-8q}$ at 100 18
\pinlabel $\vphantom{1}_{\nu-7q}$ at 124 18
\pinlabel $\vphantom{1}_{\nu-6q}$ at 148 18
\pinlabel $\vphantom{1}_{\nu-5q}$ at 172 18
\pinlabel $\vphantom{1}_{\nu-4q}$ at 196 18
\pinlabel $\vphantom{1}_{\nu-3q}$ at 220 18
\pinlabel $\vphantom{1}_{\nu-2q}$ at 244 18
\pinlabel $\vphantom{1}_{\nu-q}$ at 268 18
\pinlabel $\vphantom{1}_{\nu}$ at 292 18
\pinlabel $\overbrace{\hphantom{--.}}^{V=1}$ at 279 45
\pinlabel $\overbrace{\hphantom{--.}}^{V=3}$ at 231 69
\pinlabel $\overbrace{\hphantom{--.}}^{V=6}$ at 159 106
\pinlabel $\overbrace{\hphantom{--.}}^{V=10}$ at 63 154
\endlabellist
\centering
\includegraphics[scale=1.25]{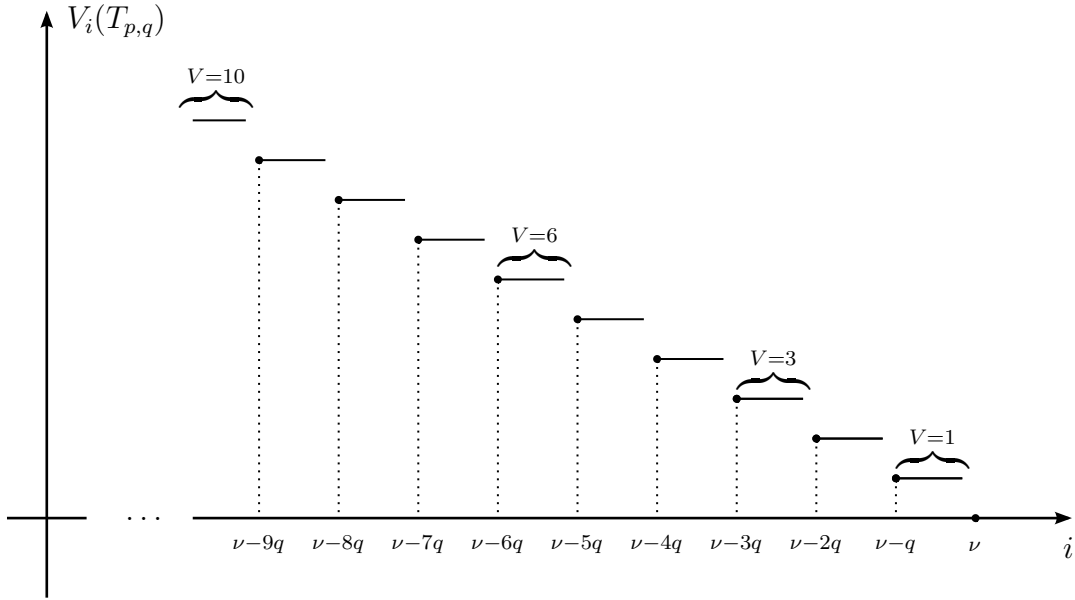}
\caption{A portion of the graph of the function $V(\cdot)$, under the assumption $p>9q$. We let $\nu = \nu(T_{p,q}) = \frac{(p-1)(q-1)}2$.}\label{f:graphVTpq}
\end{figure}

Suppose now that $p>9q$.

In what follows, we will need to use \cref{l:vanishingds} with $k=4$, and this is allowed only when $m\ge 9$.
However, we notice that if $q>3$, then $\nu \ge \frac{9q(q-1)}2 \ge 36 > 28$, and that if $q=3$ and $p>29$ then $\nu > 28$; in both cases, thanks to \cref{p:mvsnu}, $m$ is strictly larger than $\frac{1+\sqrt{1+28\cdot 8}}2 = 8$.

The two cases $q=3$, $p = 28, 29$ need separate treatment: in both instances, \cref{p:mvsnu} implies that the only possible value of $m$ is $8$, and a direct computation shows that the correction term $d(S^3_{64}(T_{p,3}),20)$ is $-4$.
That is, $S^3_{64}(T_{28,3})$ and $S^3_{64}(T_{29,3})$ do not bound rational homology balls.

Since neither $T_{28,3}$ nor $T_{29,3}$ has a surgery that bounds a rational homology ball, we can suppose that $\nu>28$, and in particular $m\ge 9$.

Recall that the semigroup of the singularity is symmetric \cite[Theorem 4.3.5]{Wall}, in the sense that $x$ belongs to the semigroup if and only if $pq-p-q-x$ does not.
Hence, since $p > 9q$, the first nine elements of the semigroup are $0,q,\dots,9q$, and therefore the nine largest gaps $g_9 < \dots < g_1$ are $g_k = (q-1)p - kq$ (see \cref{f:graphVTpq}).

This shows that if $V(i) = 10$, then $i < g_9-\nu$; if $k=1,\dots, 9$, $V(i) = k$ exactly when $g_k-\nu\le i < g_{k-1}-\nu$.

If $S^3_{m^2}(T_{p,q})$ bounds and $m\ge 9$, then \cref{l:vanishingds} implies that its correction terms in the \spinc  structures labelled by $\frac{m(m-9)}2, \frac{m(m-7)}2, \frac{m(m-5)}2, \frac{m(m-3)}2, \frac{m(m-1)}2$ must all vanish.
Applying Equation~\eqref{e:NiWuZ}, we obtain $V(\frac{m(m-9)}2) = 10$, $V(\frac{m(m-7)}2) = 6$, $V(\frac{m(m-5)}2) = 3$, $V(\frac{m(m-3)}2) = 1$.

These translate into the inequalities
\[
\begin{aligned}
{\textstyle\frac{m(m-9)}2} &< \nu-9q,\\
\nu-3q \le {\textstyle\frac{m(m-5)}2} &< \nu-2q,
\end{aligned}
\qquad
\begin{aligned}
\nu-6q &\le {\textstyle\frac{m(m-7)}2} < \nu-5k,\\
\nu-q &\le {\textstyle\frac{m(m-3)}2} < \nu.
\end{aligned}
\]

In particular, the first row of inequalities implies that $m = \frac{m(m-7)}2 - \frac{m(m-9)}2 > 3q$; the second row, on the other hand, shows that $m = \frac{m(m-3)}2 - \frac{m(m-5)}2 < 3q$, which leads to a contradiction.
%
%
%
\end{proof}

\subsection{Asymptotic classification}

Recall that \cref{p:mvsnu} asserts that if $S^3_{m^2}(K)$ bounds a rational homology ball, then
\[
\frac{1+\sqrt{1+8\nu}}2 \le m < \frac{3+\sqrt{9+8\nu}}2,
\]
from which one obtains
\[
2\nu+2-(1+\sqrt{1+8\nu}) \le (m-1)(m-2) < 2\nu+2.
\]
If we call $2\eps$ the difference $2\nu+2-(m-1)(m-2)$, we have $0<2\eps \le 1+\sqrt{1+8\nu} \le 2m$.

Let us now focus on the case $K=T_{p,q}$, so that $2\nu = (p-1)(q-1)$. 
We can recast the inequalities above in terms of the semigroup function $\Gamma$ that associates to an integer $i$ the $i$--th element $\Gamma(i)$ of the semigroup $\Gamma_{p,q}$ generated by $p$ and $q$. 
Since $q<p$, we always have $\Gamma(1) = 0$ and $\Gamma(2) = q$. Manipulating the identity $d(S^3_{m^2}(T_{p,q}),j) = 0$ as in \cite[Section 6]{BCG} yields, for each $j\in\{0,\dots,m-2\}$, the two inequalities
\begin{equation}\label{e:star}
\Gamma\left(\frac{(j+1)(j+2)}2\right) \le jm+\eps
\end{equation}
and
\begin{equation}\label{e:starstar}
\Gamma\left(\frac{(j+1)(j+2)}2+1\right) > jm+\eps
\end{equation}

These are formally very similar to the inequalities $(\star_j)$ and $(\star\star_j)$ in \cite[Section 6.1]{BCG}.
In fact, the two latter inequalities collapse to an identity for $g=0$ (as in \cite{BorodzikLivingston}), and these identity are sufficient to classify singularities with one Puiseux pairs that appear as the only singularity of a rational plane curve, up to finitely many exceptions \cite[Theorem 1.1]{55letters} (see \cite[Remark 1.5]{BCG}).
It is natural to ask whether \eqref{e:star} and \eqref{e:starstar} are strong enough to provide the same kind of result in the topological, rather than in the complex curve, setting.

\begin{quest}
Are \eqref{e:star} and \eqref{e:starstar} sufficient to recover all but finitely many triples $(p,q;m^2)$ for which $S^3_{m^2}(T_{p,q})$ bounds a rational homology balls?
\end{quest}

\section{Integral surgeries on torus knots}\label{torus}

The goal of this section is the proof of  \cref{t:kq+1}.
Along the way, we briefly recall the plumbing description for Seifert fibred manifolds and a lemma on rational homology cobordant Seifert manifolds. We also recall Owens and Strle's concordance invariant $m$, and we describe the plumbing graph of the Seifert fibred spaces which arise as Dehn surgery on torus knots. 

Suppose $\Gamma$ is a negative canonical plumbing graph with $n$ vertices. Let $-\Gamma$ be the graph with the opposite sign on each weight.
Then $\partial P(-\Gamma)=-\partial P(\Gamma)$ and $-\Gamma$ is the positive canonical plumbing graph of $-\partial P(\Gamma)$.
The dual of $\Gamma$, i.e. $\Gamma^*$, is the negative canonical plumbing graph of $-\partial P(\Gamma)$. Finally $-\Gamma^*$
is the positive canonical plumbing graph of $\partial P(\Gamma)$. Note that $-(\Gamma^*)=(-\Gamma)^*$.
If $\Gamma$ is negative definite then $\Gamma$ embeds in $(\Z^n,-I)$ if and only if $-\Gamma$ embeds in $(\Z^n,I)$.

In order to simplify the notation and to make the proofs easier to read, in this section we switch from negative plumbing graphs to positive ones. 
Since the topological property we want to detect is not affected by reversal of orientation this modification will not affect the arguments in our proofs.

\subsection{Preliminaries}\label{ss:seifert}
We recall from \cite{NeumannRaymond} the basic definitions of Seifert manifolds.
A closed Seifert fibred manifold over $S^2$ 
can be described by its \emph{unnormalized Seifert invariants}
\[
\left(b;\frac{\alpha_1}{\beta_1},\dots,\frac{\alpha_k}{\beta_k}\right).
\]
Here $b,\alpha_i,\beta_i$ are integers, $\alpha_i> 1$ and $\gcd(\alpha_i,\beta_i)=1$.
We denote such a manifold with $Y(b;\frac{\alpha_1}{\beta_1},\dots,\frac{\alpha_k}{\beta_k})$.
These data determine the manifold, but every Seifert fibred manifold admits several such descriptions. 
A surgery description for $Y(b;\frac{\alpha_1}{\beta_1},\dots,\frac{\alpha_k}{\beta_k})$ is depicted in \cref{seifertsurgery}.

\begin{thm}\label{seiferttheorem}
 Let $\Gamma$ be the following star-shaped positive canonical plumbing graph.
\[
\xygraph{
!{<0cm,0cm>;<1cm,0cm>:<0cm,1cm>::}
!~-{@{-}@[|(2.5)]}
!{(0,1.5) }*+{\bullet}="x"
!{(1.5,3) }*+{\bullet}="a1"
!{(4.5,3) }*+{\bullet}="a2"
!{(1.5,0) }*+{\bullet}="c1"
!{(4.5,0) }*+{\bullet}="c2"
!{(1.5,2) }*+{\bullet}="b1"
!{(4.5,2) }*+{\bullet}="b2"
!{(3,0) }*+{\dots}="cm"
!{(3,3) }*+{\dots}="am"
!{(3,2) }*+{\dots}="bm"
!{(3,1) }*+{\vdots}
!{(0,1.9) }*+{b}
!{(1.5,3.4) }*+{a_1^1}
!{(4.5,3.4) }*+{a_{n_1}^1}
!{(1.5,2.4) }*+{a_1^2}
!{(4.5,2.4) }*+{a_{n_2}^2}
!{(1.5,0.4) }*+{a_1^k}
!{(4.5,0.4) }*+{a_{n_k}^k}
"x"-"c1"
"x"-"a1"
"x"-"b1"
"a1"-"am"
"b1"-"bm"
"c1"-"cm"
"a2"-"am"
"b2"-"bm"
"c2"-"cm"
}
\]
Then $\partial P(\Gamma)$ is orientation-preserving diffeomorphic to 
$Y(b;\frac{\alpha_1}{\beta_1},\dots,\frac{\alpha_k}{\beta_k})$ where
$\frac{\alpha_i}{\beta_i}=[a^i_1,\dots,a^i_{n_i}]^-$.
\end{thm}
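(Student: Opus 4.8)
The plan is to obtain the diffeomorphism by Kirby calculus, converting the standard surgery presentation of the Seifert manifold into the surgery presentation of the plumbing $P(\Gamma)$. First I would take the surgery description of $Y(b;\frac{\alpha_1}{\beta_1},\dots,\frac{\alpha_k}{\beta_k})$ recalled in \cref{seifertsurgery}: an unknot $U_0$ with integral framing $b$, together with $k$ pairwise unlinked meridians $U_1,\dots,U_k$ of $U_0$, where $U_i$ carries the rational surgery coefficient $\frac{\alpha_i}{\beta_i}$. The key point is that this framed link does present the Seifert manifold with the given unnormalized invariants, and with the orientation induced as boundary; this is the content of \cite{NeumannRaymond}, which I would simply cite.

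Second, for each $i$ I would replace the rational surgery along $U_i$ by a linear chain of integrally framed unknots, exactly as in the rational-to-integral procedure used in \cref{r:Donaldson2} and depicted in \cref{f:rational-to-integral}: starting from the negative continued fraction expansion $\frac{\alpha_i}{\beta_i}=[a^i_1,\dots,a^i_{n_i}]^-$, a sequence of slam-dunk moves (equivalently, Rolfsen twists and handle slides) transforms the single rational surgery into a chain of unknots with integral framings $a^i_1,\dots,a^i_{n_i}$, whose first member is a meridian of $U_0$. This is precisely the inverse of the slam-dunk that collapses such a chain to the single coefficient $[a^i_1,\dots,a^i_{n_i}]^-=\frac{\alpha_i}{\beta_i}$. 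Performing this on all $k$ legs, the resulting diagram is the framed link associated to the star-shaped tree $\Gamma$ — central vertex $b$, with $k$ disjoint linear chains $a^i_1,\dots,a^i_{n_i}$ attached to it — and a framed link on a tree always presents the boundary of the corresponding plumbing $4$--manifold. Hence the link presents $\partial P(\Gamma)$, and since every move used preserves the underlying oriented $3$--manifold, the diffeomorphism is orientation-preserving.

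\textbf{Main obstacle.} The only delicate point is the bookkeeping of sign and orientation conventions. The paper uses the convention that $L(p,q)$ is $-p/q$--surgery on the unknot, works throughout with the negative continued fraction expansion, and has just switched from negative to positive canonical plumbing graphs; so one must check carefully that the rational coefficients appearing in \cref{seifertsurgery} really do unfold into the integer weights $a^i_j$ (rather than $-a^i_j$, or $\beta_i/\alpha_i$--type coefficients), and that the orientation induced on $\partial P(\Gamma)$ matches the one of $Y(b;\frac{\alpha_1}{\beta_1},\dots,\frac{\alpha_k}{\beta_k})$. Once the conventions are pinned down the verification is routine, and the genuine content of the statement is the classical correspondence between star-shaped plumbings and Seifert fibrations over $S^2$.
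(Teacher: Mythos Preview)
The paper does not actually prove this theorem: it is stated as a standard fact about Seifert manifolds, with the surrounding material citing \cite{NeumannRaymond} for the basic definitions. Your Kirby-calculus argument via iterated slam-dunks is correct and is exactly the standard way one would justify the statement if pressed; the orientation and sign bookkeeping you flag is indeed the only point requiring care, and the paper's conventions (positive plumbing, negative continued fractions, surgery picture in \cref{seifertsurgery}) are consistent with your unfolding.
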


\begin{figure}[h]
\labellist
\pinlabel $b$ at 183 46
\pinlabel $\dots$ at 110 8
\pinlabel $\frac{\alpha_1}{\beta_1}$ at 10 8
\pinlabel $\frac{\alpha_2}{\beta_2}$ at 50 8
\pinlabel $\frac{\alpha_k}{\beta_k}$ at 168 8
\endlabellist
\centering
\includegraphics[scale=1.0]{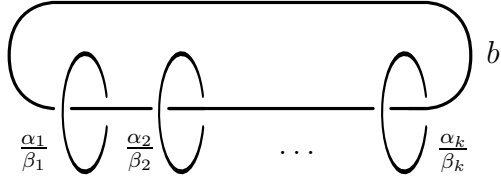}
\caption{A surgery description for the Seifert fibred manifold $Y(b;\frac{\alpha_1}{\beta_1},\dots,\frac{\alpha_k}{\beta_k})$.}
\label{seifertsurgery}
\end{figure}

The following lemma is implicit in \cite[Section 3.1]{Lecuona}. 
Here we deduce it as a special case of a more general result which can be found in \cite[Section 4]{Paolo}.

\begin{lemma}\label{join}
Let $Y:=Y(b;\frac{\alpha_1}{\beta_1},\dots,\frac{\alpha_n}{\beta_n})$ be a Seifert manifold with $\alpha_i>\beta_i\geq 1$ 
for each $i$. Assume that there exist $h,k$ such that 
\[
\frac{\beta_h}{\alpha_h}+\frac{\beta_k}{\alpha_k}=1.
\]
Let $Y':=Y(b-1;\frac{\alpha_1}{\beta_1},\dots,\widehat{\frac{\alpha_h}{\beta_h}},\dots, \widehat{\frac{\alpha_k}{\beta_k}},\dots,\frac{\alpha_n}{\beta_n})$. Then, $Y$ and $Y'$ are rational homology cobordant.
\end{lemma}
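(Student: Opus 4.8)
The plan is to produce a rational homology cobordism from $Y$ to $Y'$, equivalently a rational homology ball bounded by $Y\#(-Y')$.

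First I would unwind the hypothesis. Clearing denominators in $\frac{\beta_h}{\alpha_h}+\frac{\beta_k}{\alpha_k}=1$ gives $\beta_h\alpha_k+\beta_k\alpha_h=\alpha_h\alpha_k$, and since $\gcd(\alpha_h,\beta_h)=\gcd(\alpha_k,\beta_k)=1$ this forces $\alpha_h=\alpha_k=:\alpha$ and $\beta_h+\beta_k=\alpha$. Thus the two exceptional fibres have surgery coefficients $r_h=\alpha/\beta_h$ and $r_k=\alpha/(\alpha-\beta_h)$ with $\tfrac1{r_h}+\tfrac1{r_k}=1$, and the two legs attached to the central vertex of the plumbing graph of \cref{seiferttheorem} are the negative continued fraction expansions of $\alpha/\beta_h$ and $\alpha/(\alpha-\beta_h)$, which are Riemenschneider dual to one another. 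I would also record the elementary identity
\[
b-\sum_i\frac{\beta_i}{\alpha_i}=(b-1)-\sum_{i\neq h,k}\frac{\beta_i}{\alpha_i},
\]
which yields $|H_1(Y;\Z)|=\alpha^2\,|H_1(Y';\Z)|$; this perfect-square factor is the numerical shadow of the cobordism we seek, and it is consistent with \cref{p:extension}.

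For the cobordism itself, the quickest route is to invoke the general criterion of~\cite[Section 4]{Paolo}: deleting from a plumbing graph a pair of legs at a common vertex whose associated continued fractions are Riemenschneider dual, while simultaneously decreasing that vertex's weight by $1$, changes the plumbed manifold only up to rational homology cobordism. Our graphs are related by exactly this move (for three-legged graphs it is the move implicit in~\cite[Section 3.1]{Lecuona}), so after matching notation the lemma follows. If one prefers a self-contained argument, I would construct the cobordism by hand: working in the surgery picture of \cref{seifertsurgery}, the meridians $\mu_h,\mu_k$ of the central unknot $O$ carry coefficients $r_h,r_k$ with $\tfrac1{r_h}+\tfrac1{r_k}=1$, and I would exhibit a cobordism---built from $Y\times[0,1]$ by attaching a short, explicitly cancelling collection of $2$-handles supported in a neighbourhood of $O\cup\mu_h\cup\mu_k$---that replaces these two meridians by the empty link and drops the coefficient of $O$ from $b$ to $b-1$. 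This local model is precisely the rational homology ball $(L(\alpha,\beta_h)\setminus B^3)\times[0,1]$ that fills $L(\alpha,\beta_h)\#(-L(\alpha,\beta_h))$, ``plugged in'' near the central fibre; since it does not touch the remaining legs, the same handles turn $Y\times[0,1]$ into a cobordism $W$ from $Y$ to $Y'$.

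The remaining task, and the one I expect to be the only real obstacle, is to check that $W$ is a rational homology cobordism, i.e. that $H_*(W,Y;\Q)=H_*(W,Y';\Q)=0$. This is a Mayer--Vietoris and linking-matrix computation: one must verify that the handles used to absorb the dual legs contribute to the intersection form of $W$ a summand that is metabolic over $\Q$---which is exactly what Riemenschneider duality of the two chains guarantees---and that the first homology behaves as dictated by $|H_1(Y)|=\alpha^2|H_1(Y')|$. Apart from this bookkeeping, the main nuisance is keeping orientations and the positive versus negative canonical plumbing conventions straight throughout the Kirby moves (as flagged in the opening of \cref{torus}); once the dual-chain structure is made visible, the underlying topology is elementary.
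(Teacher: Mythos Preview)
Your proposal is correct and follows essentially the same route as the paper: both reduce the lemma to \cite[Proposition~4.6]{Paolo} after recognising that the two legs are Riemenschneider dual---the paper phrases this equivalently by exhibiting the linear plumbing with weights $a_n,\dots,a_1,1,b_1,\dots,b_m$ and observing it represents $S^1\times S^2$. Your additional material (the arithmetic unpacking forcing $\alpha_h=\alpha_k$, the alternative hands-on Kirby construction, and the homology bookkeeping) is correct but superfluous once the citation is in place; the paper's proof simply checks the hypothesis of the cited proposition and stops.
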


\begin{proof}
This Lemma is a direct consequence of \cite[Proposition 4.6]{Paolo}. In order to apply this result we need to show that
the positive canonical plumbing graph
of $Y$ is obtained from that of $Y'$ by
attaching a plumbing graph which represents $S^1\times S^2$. 
The attachment is obtained by identifing two vertices, the weight of the new vertex is the sum of the weitghts of the identified vertices.

 Consider the linear graph
\[
\xygraph{
!{<0cm,0cm>;<1cm,0cm>:<0cm,1cm>::}
!~-{@{-}@[|(2.5)]}
!{(0,0) }*+{\bullet}="x"
!{(1.5,0) }*+{\dots}="a1"
!{(3,0) }*+{\bullet}="a2"
!{(4.5,0) }*+{\bullet}="c1"
!{(6,0) }*+{\bullet}="d"
!{(7.5,0) }*+{\dots}="e"
!{(9,0) }*+{\bullet}="f"
!{(0,0.4) }*+{a_n}
!{(3,0.4) }*+{a_1}
!{(4.5,0.4) }*+{1}
!{(6,0.4) }*+{b_1}
!{(9,0.4) }*+{b_m}
"x"-"a1"
"a2"-"a1"
"a2"-"c1"
"c1"-"d"
"d"-"e"
"e"-"f"
}
\]
where $a_i\geq 2$, $b_j\geq 2$ for each $i,j$ and 
\[
[a_1,\dots,a_n]^-=\frac{\alpha_h}{\beta_h}; \qquad [b_1,\dots,b_m]^-=\frac{\alpha_k}{\beta_k}.
\]

It is easy to see that this plumbing graph describes the plumbed $3$--manifold $S^1\times S^2$. 
It follows from \cref{seiferttheorem} that  the canonical plumbing graph of $Y$ is obtained from that of $Y'$ by adding two legs with weights 
$(a_1,\dots,a_n)$,$(b_1,\dots,b_m)$ and by increasing the weight of the central vertex by one. This operation is precisely the attachment described above and the conclusion now follows from
\cite[Proposition 4.6]{Paolo}.
\end{proof}

Observe that, in order to apply \cref{join}, one needs $\alpha_h = \alpha_k$, i.e. that the two fibres must have the same multiplicity.

Given a knot $K\subset S^3$ Owens and Strle in \cite{OwensStrle} introduce the following invariant:
\[
m(K)=\textrm{inf}\{r\in\mathbb{Q}_{>0}\mid S^3_r(K) \textrm{ bounds a negative definite 4--manifold}\}.
\]
They show that $m$ is a concordance invariant and that it vanishes on negative knots. 
They also show the following
\begin{thm}\label{OwTh}
 Let $T_{p,q}$ be the positive $(p,q)$--torus knot. Then 
 \[
 m(T_{p,q})=\left\{
 \begin{array}{ll}
  pq-\frac{q}{p*}& \textrm{if }$n$\textrm{ is even}\\
  pq-\frac{p}{q*}& \textrm{if }$n$\textrm{ is odd}
 \end{array}
 \right.
 \]
 where 
 \begin{itemize}
  \item $n$ is the number of steps in the Euclidean algorithm for $p$ and $q$,
  \item $0<q^*<p$ is such that $qq^*\equiv 1 \pmod p$,
  \item $0<p^*<q$ is such that $pp^*\equiv 1 \pmod q$.
 \end{itemize} 
\end{thm}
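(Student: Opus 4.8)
The plan is to prove the two inequalities $m(T_{p,q})\le R$ and $m(T_{p,q})\ge R$ separately, where $R$ denotes the claimed value ($pq-q/p^*$ or $pq-p/q^*$ according to the parity of $n$). Throughout, $S^3_r(T_{p,q})$ is a Seifert fibred space over $S^2$, reducible exactly for $r=pq$ (where it splits as a connected sum of two lens spaces) and a lens space for $r=pq\pm1$ by Moser's classification; I would first record its plumbing description from \cref{seiferttheorem}, reading off the Seifert invariants of the surgery. The appearance of the modular inverses $p^*,q^*$ and of the number $n$ of steps in the Euclidean algorithm strongly suggests that both halves of the argument should be organised as an induction mirroring the continued-fraction recursion \cref{e:Lrecursion}, with the parity of $n$ dictating which branch of the formula is active.

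For the upper bound I would exhibit, for every rational $r>R$, a negative definite $4$--manifold bounded by $S^3_r(T_{p,q})$. For $r$ close to $R$ these are \emph{not} the canonical plumbings of $S^3_r(T_{p,q})$ (whose orbifold Euler number has the wrong sign once $r<pq$), but rather plumbings obtained from the surgery description of \cref{seiferttheorem} after expanding $r$ as a negative continued fraction; \cref{join} can be used to contract matched pairs of legs and to reduce to manifolds whose negative definiteness is transparent. The content of this step is the computation showing that the resulting intersection form is negative definite precisely when $r>R$, which is exactly where the arithmetic of $p^*,q^*$ enters.

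For the lower bound I would obstruct the existence of a negative definite filling of $S^3_r(T_{p,q})$ for $r<R$ using the correction-term inequality $c_1(\fs)^2+b_2(W)\le 4d(Y,\ft)$ of \cite{OzsvathSzabo-absolutely}. The correction terms $d(S^3_r(T_{p,q}),\ft)$ are computed from \cref{e:NiWu}, combining the lens space values $d(L(p,q),\cdot)$ from \cref{e:Lrecursion} with the torus knot invariants $V_i(T_{p,q})$, which are controlled by the gap function of the numerical semigroup $\langle p,q\rangle$. The point is that the inequality must hold simultaneously for all \spinc structures carried by the filling, so that the characteristic covectors of the (negative definite, determinant $\pm|H_1|$) intersection lattice of $W$ must realise prescribed squares; a packing/embedding constraint of Elkies--Donaldson type then forces $r\ge R$.

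I expect the main obstacle to be matching the two bounds exactly at $R$: the upper bound's near-threshold fillings and the lower bound's simultaneous \spinc constraint both have to be pushed to the precise value $pq-q/p^*$ (resp. $pq-p/q^*$), and the bookkeeping that converts the Euclidean-algorithm recursion into these closed forms --- including the parity dichotomy --- is where the difficulty concentrates. As a consistency check one can compare against the integral classification of \cref{t:kq+1} in the cases $p\equiv\pm1\pmod q$, where both $R$ and the explicit fillings can be written down by hand.
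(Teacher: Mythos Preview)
The paper does not prove this theorem. Look again at the surrounding text: the authors introduce the invariant $m(K)$, attribute it to Owens and Strle \cite{OwensStrle}, and then write ``They also show the following'' immediately before the statement of \cref{OwTh}. The theorem is quoted from \cite{OwensStrle} and used as a black box (for instance in the second proof of \cref{t:pvs9q} and in the $k=1$ case of \cref{t:kq+1}); there is nothing in this paper to compare your proposal against.

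That said, your two-step plan (build explicit negative definite fillings for $r>R$; obstruct fillings for $r<R$ via the correction-term inequality) is the right shape, and is in the spirit of what Owens and Strle actually do. But as written it is only a plan: the two places where you say ``the content of this step is the computation'' and ``the bookkeeping \dots is where the difficulty concentrates'' are precisely the entire proof. In particular, invoking \cref{join} for the upper bound is misplaced --- that lemma produces rational homology \emph{cobordisms}, not negative definite fillings, and the near-threshold fillings in \cite{OwensStrle} are obtained directly from plumbing/Kirby-calculus constructions rather than by contracting legs. For the lower bound, the actual argument in \cite{OwensStrle} does not use \cref{e:NiWu} (which appeared later); they compute the relevant correction terms of Seifert spaces by other means and run an Elkies-type inequality. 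If you want to reconstruct the proof, you should consult \cite{OwensStrle} directly rather than the present paper.
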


\begin{rem}
 Since rational homology balls are negative, definite, if $S^3_r(K)$ bounds a $\Q HB^4$
 for some $r>0$ then, $r\geq m(K)$.
\end{rem}

We are now in position to give the second proof of \cref{t:pvs9q}.

\begin{proof}[Second proof of \cref{t:pvs9q}]
It follows from \cref{OwTh} that $m(T_{p,q}) \ge q(p-1)$;
 therefore, by the remark above, if $m^2<q(p-1)$ then $S^3_{m^2}(T_{p,q})$ does not bound a rational homology ball. 

Applying the right-hand side inequality of \cref{p:mvsnu}, we get that if
\[
\left(\frac{3+\sqrt{9+8\nu}}2\right)^2 \le q(p-1),
\]
no integral surgery along $T_{p,q}$ bounds a rational homology ball. We now set out to prove that if $p>9q$ the above inequality holds. Indeed, expanding and substituting $2\nu = (p-1)(q-1)$ yields
\[
3\sqrt{9+8\nu} \le 2p-11
\]
Since $p>9q\ge 18$, we can square again and obtain
\[81+36pq-36p-36q+36 \le 4p^2-44p+121 \Longleftrightarrow 9(p-1)q \le p^2-2p-1 \Longleftrightarrow 9q + 1 \le p,
\]
which is exactly what we required.
\end{proof}

The following lemma is proved in \cite[Lemma 4.4]{OwensStrle}.
\begin{lemma}\label{Ow}
 For any rational number $r$
\[
 S^3_r(T_{p,q})=Y\left(2;\frac{p}{q^*},\frac{q}{p^*},\frac{pq-r}{pq-r-1}\right).\qedhere
 \]
 \end{lemma}
Now we give an explicit description of the canonical plumbing graphs 
which describe certain surgeries on torus knots. 

\begin{prop}\label{r>pq+1}
 Assume that $r\notin [pq-1,pq+1]$. The $3$--manifold $S^3_r(T_{p,q})$ can be described via the 
 following positive canonical plumbing graph
\[
\xygraph{
!{<0cm,0cm>;<1cm,0cm>:<0cm,1cm>::}
!~-{@{-}@[|(2.5)]}
!{(0,1) }*+{\bullet}="x"
!{(1.5,2) }*+{\bullet}="a1"
!{(4.5,2) }*+{\bullet}="a2"
!{(1.5,0) }*+{\bullet}="c1"
!{(4.5,0) }*+{\bullet}="c2"
!{(1.5,1) }*+{\bullet}="b1"
!{(4.5,1) }*+{\bullet}="b2"
!{(3,0) }*+{\dots}="cm"
!{(3,2) }*+{\dots}="am"
!{(3,1) }*+{\dots}="bm"
!{(0,1.4) }*+{d}
!{(1.5,2.4) }*+{a_1}
!{(4.5,2.4) }*+{a_n}
!{(1.5,1.4) }*+{b_1}
!{(4.5,1.4) }*+{b_m}
!{(1.5,0.4) }*+{c_1}
!{(4.5,0.4) }*+{c_k}
"x"-"c1"
"x"-"a1"
"x"-"b1"
"a1"-"am"
"b1"-"bm"
"c1"-"cm"
"a2"-"am"
"b2"-"bm"
"c2"-"cm"
}
\]
where $[a_1,\dots,a_n]^-=\frac{p}{q^*}$, $[b_1,\dots,b_m]^-=\frac{q}{p^*}$, and:
 \begin{itemize}
  \item if $r > pq+1$ then $d=1$ and $[c_1,\dots,c_k]^-=r-pq$; 
  \item if $r < pq-1$ then $d=2$ and $[c_1,\dots,c_k]^-=\frac{pq-r}{pq-r-1}$. 
\end{itemize}
\end{prop}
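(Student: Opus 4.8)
The plan is to deduce the statement from \cref{Ow} and \cref{seiferttheorem}. \cref{Ow} presents $S^3_r(T_{p,q})$ as the Seifert manifold
\[
Y\!\left(2;\tfrac{p}{q^*},\tfrac{q}{p^*},\tfrac{pq-r}{pq-r-1}\right),
\]
so the task reduces to rewriting this with all three Seifert coefficients in the form $\frac{\alpha}{\beta}>1$ (which is what \cref{seiferttheorem} requires, so that each $\frac{\alpha_i}{\beta_i}=[\,\cdot\,]^-$ has all entries $\ge 2$) and then reading off the plumbing graph from \cref{seiferttheorem}: central weight the resulting $b$, and legs the negative continued fraction expansions of the $\frac{\alpha_i}{\beta_i}$. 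The first two coefficients already have this form, since $0<q^*<p$ and $0<p^*<q$ force $\frac{p}{q^*}>1$ and $\frac{q}{p^*}>1$; their expansions give the two legs $[a_1,\dots,a_n]^-=\frac{p}{q^*}$ and $[b_1,\dots,b_m]^-=\frac{q}{p^*}$ appearing in the statement. All the remaining work is with the third coefficient, and this is exactly where the hypothesis $r\notin[pq-1,pq+1]$ is used, to split into the two cases.

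If $r<pq-1$, then $\alpha_3:=pq-r\ge 2$ while $\beta_3:=pq-r-1$ satisfies $0<\beta_3<\alpha_3$, so the presentation coming from \cref{Ow} is already admissible and \cref{seiferttheorem} gives directly the graph with $d=2$ and third leg $[c_1,\dots,c_k]^-=\frac{pq-r}{pq-r-1}$. If instead $r>pq+1$, then $\frac{pq-r}{pq-r-1}=\frac{r-pq}{r-pq+1}$ has $\alpha_3=r-pq\ge 2$ but $\beta_3=r-pq+1>\alpha_3$, so I would first normalise it using the standard equivalence on unnormalised Seifert invariants $Y(b;\dots,\tfrac{\alpha}{\beta},\dots)=Y(b-1;\dots,\tfrac{\alpha}{\beta-\alpha},\dots)$ (see \cite{NeumannRaymond}). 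Since $\beta_3-\alpha_3=1$, a single application yields $Y\!\left(1;\tfrac{p}{q^*},\tfrac{q}{p^*},\tfrac{r-pq}{1}\right)$, and now \cref{seiferttheorem} gives the graph with $d=1$ and third leg $[c_1,\dots,c_k]^-=\frac{r-pq}{1}=r-pq$ (a single vertex). In both cases the resulting graph is a positive canonical plumbing graph by \cref{seiferttheorem}, and it has the shape asserted in the statement.

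The step I expect to be most delicate is getting the sign right in the normalisation move when $r>pq+1$: one must check that reducing $\beta_3$ by $\alpha_3$ \emph{decreases} the central weight by $1$ (so that one lands on $d=1$), rather than increasing it, since the wrong sign would give $d=3$ and the wrong first homology. A quick way to pin this down is the determinant formula $|H_1(\partial P(\Gamma))|=\bigl|\,d\prod_i\alpha_i-\sum_i\beta_i\prod_{j\ne i}\alpha_j\,\bigr|$ for the star-shaped graphs of \cref{seiferttheorem}: it is invariant under the move precisely when $\beta_3\mapsto\beta_3-\alpha_3$ is paired with $d\mapsto d-1$, and it must equal $|H_1(S^3_r(T_{p,q}))|$. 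Testing, say, $r=9$, $p=3$, $q=2$ gives $d=1$ with legs $[2,2]^-$, $[2]^-$, $[3]^-$, as the proposition predicts.
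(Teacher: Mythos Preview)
Your proof is correct and follows the same route as the paper's: invoke \cref{Ow}, observe that $p/q^*,q/p^*>1$, and in the case $r>pq+1$ normalise the third Seifert invariant (the paper writes this as $\frac{pq-r}{pq-r-1}=1-\frac1{r-pq+1}$ and appeals directly to the surgery picture, which amounts to the Neumann--Raymond move you spell out). Your explicit sign check via the order of $H_1$ is more than the paper itself provides.
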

\begin{proof}
 Note that $\frac{p}{q^*}>1$ and $\frac{q}{p^*}>1$. Moreover
\[
\frac{pq-r}{pq-r-1}=\frac{r-pq}{r-pq+1}=1-\frac{1}{r-pq+1}.
\]
If $r-pq>1$, by \cref{Ow} and \cref{seifertsurgery}
we see that $S^3_r(T_{p,q})$ is described by the positive canonical plumbing graph depicted above. The case
$r<pq-1$ is analogous.
\end{proof}
\subsection{Main results}

A \emph{$2$--chain} is a linear connected plumbing graph where the weight of every vertex is $2$.
\begin{lemma}\label{2chain}
Let $\Gamma$ be a positive definite plumbing graph with $m$ vertices.
 Suppose that there exist vertices
$v_1,\dots,v_k$ such that $\Gamma\setminus\{v_1,\dots,v_k\}=\Gamma_1\sqcup\dots\sqcup\Gamma_h$,
where each $\Gamma_i$ is a $2$--chain
 whose length is not $3$, there is at most one $2$--chain of length $1$, and $h>k$. Then $\Gamma$ does not embed.
\end{lemma}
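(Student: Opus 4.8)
The plan is to argue by contradiction using the standard argument for non-embeddability into a diagonal lattice, tracking where the basis vectors of the $2$--chains must land. Suppose $\Gamma$ embeds, i.e. there is an injective lattice morphism $\phi\colon (H_2(P(\Gamma);\Z),Q_\Gamma)\hookrightarrow(\Z^m, I)$ with fixed orthonormal basis $e_1,\dots,e_m$. A $2$--chain $w_1-w_2-\dots-w_\ell$ with all weights $2$ has a well-understood image: since $w_j\cdot w_j = 2$, $w_j\cdot w_{j+1}=1$, and consecutive-but-one vertices are orthogonal, one shows inductively that each $\phi(w_j)$ is a sum of exactly two distinct basis vectors with signs, and that the chain ``uses up'' exactly $\ell+1$ basis vectors $f_0,\dots,f_\ell$ in the pattern $\phi(w_j)=\pm(f_{j-1}-f_j)$ (after a change of signs of the $f_i$). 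The crucial point is that a $2$--chain of length $\ell$ occupies a rank-$\ell$ sublattice but forces the presence of $\ell+1$ coordinate directions, i.e. it has ``one more foot than its rank''; the hypothesis that no $\Gamma_i$ has length $3$ (combined with length $\ne 0$) is what guarantees these endpoint vectors $f_0,f_\ell$ are genuinely distinct and cannot be forced to coincide, and the ``at most one $2$--chain of length $1$'' clause handles the one degenerate case where a length-$1$ chain's two feet could be shared.

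Next I would do the counting. Removing $v_1,\dots,v_k$ leaves $\Gamma_1\sqcup\dots\sqcup\Gamma_h$; write $\ell_i = \#\Gamma_i$, so $m = k + \sum_i \ell_i$. Each $\Gamma_i$, under $\phi$, involves $\ell_i+1$ basis vectors. I would show that for distinct $i\ne i'$ the two collections of feet are \emph{disjoint}: if $\phi(w)\cdot\phi(w')=0$ for every pair $w\in\Gamma_i$, $w'\in\Gamma_{i'}$ (which holds since $\Gamma_i,\Gamma_{i'}$ lie in different components of $\Gamma\setminus\{v_1,\dots,v_k\}$ — wait, they could both be adjacent to some $v_j$, so actually one must be careful here). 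The honest version: the full tree $\Gamma$ is connected, so the $\Gamma_i$ are attached to the $v_j$'s; the images $\phi(v_1),\dots,\phi(v_k)$ span a rank-$k$ sublattice and hence ``reach into'' at most finitely many coordinates, but the key inequality is that $\sum_i(\ell_i+1) = \bigl(\sum_i \ell_i\bigr) + h = (m-k) + h$ basis directions are needed just to accommodate the chains, and after accounting for sharing of feet among chains that meet a common $v_j$ (at most $k$ such coincidences can be arranged, very roughly — this is the delicate bookkeeping), one still needs at least $(m-k)+h - (\text{something} \le k)$ directions, which combined with $h>k$ pushes the total strictly above $m$, contradicting the fact that only $m$ coordinates are available.

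The genuinely hard part, and where I expect the real work to be, is exactly this last bookkeeping: bounding how many of the ``extra'' feet $\{f_0^{(i)}, f_{\ell_i}^{(i)}\}$ of the various chains can be identified with each other or absorbed by the images of $v_1,\dots,v_k$. The hypotheses (no length-$3$ chain, at most one length-$1$ chain, and $h > k$) are precisely calibrated to make this bound work, so the proof will hinge on a careful case analysis of how a vertex $v_j$ of $\Gamma$ can be adjacent to several $2$--chains and what constraints $\phi(v_j)\cdot\phi(v_j)\ge 1$ together with the adjacency relations $\phi(v_j)\cdot\phi(w)=1$ impose on which basis vectors are available. I would set this up by assigning to each component $\Gamma_i$ its pair of endpoint feet, showing a length-$1$ chain contributes a pair that could in principle be shared on both ends while a chain of length $\ge 2$ (and $\ne 3$) contributes at least one foot that is ``private'', and then a pigeonhole on $h$ versus $k$ closes the argument. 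I would likely isolate the two-chain image computation as a preliminary sublemma to keep the main count clean.
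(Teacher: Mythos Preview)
Your overall counting strategy is exactly the paper's: each $2$--chain of length $\ell\neq 3$ embeds essentially uniquely and occupies $\ell+1$ basis vectors, so the chains together need $\sum_i(\ell_i+1)=(m-k)+h>m$ coordinates, which is impossible in $\Z^m$.

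However, you talk yourself out of the clean argument at the crucial step. When you write ``they could both be adjacent to some $v_j$, so actually one must be careful here,'' that worry is unfounded. Vertices $w\in\Gamma_i$ and $w'\in\Gamma_{i'}$ lie in different components of $\Gamma\setminus\{v_1,\dots,v_k\}$; since $\Gamma$ is a tree, any path from $w$ to $w'$ passes through some $v_j$, so $w$ and $w'$ are \emph{not adjacent in $\Gamma$} and hence $\phi(w)\cdot\phi(w')=0$. Whether or not they are both adjacent to the same $v_j$ is irrelevant. So the orthogonality between distinct chains is immediate, and the deleted vertices $v_1,\dots,v_k$ play no further role whatsoever in the argument.

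Once you have pairwise orthogonality of the chains, disjointness of their supporting sets of basis vectors follows directly from the uniqueness of the embedding (this is where ``length $\neq 3$'' enters: it rules out the exceptional realisation $e_1+e_2,\,e_1+e_3,\,e_1-e_2$ using only three basis vectors; and ``at most one chain of length $1$'' rules out the only way two orthogonal chains can share support, namely $e_1-e_2$ and $e_1+e_2$). There is no ``delicate bookkeeping'' about feet being absorbed by the $\phi(v_j)$, and no case analysis on how the $v_j$ attach to the chains. The paper's proof is three sentences: unique realisation, disjoint supports by orthogonality, and the inequality $(m-k)+h>m$. Your proposed pigeonhole on $h$ versus $k$ involving the $v_j$'s is chasing a complication that isn't there.
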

\begin{proof}
Every 2--chain whose length is not $3$ has an essentially unique realisation as a sublattice of the standard
lattice $(\mathbb{Z}^N,I)$. In particular $N$ is strictly
larger than length of the 2--chain.
If, moreover, there are no chains of length $1$ it is easy to see that
each basis vector, say $e_i$, appears in the expression of a unique $2$--chain. Therefore every $2$--chain requires
a set of basis vectors which are orthogonal to each vector belonging to any another $2$--chain.
It follows that the subgraph $\widetilde{\Gamma}:=
\Gamma\setminus\{v_1,\dots,v_k\}=\Gamma_1\sqcup\dots\sqcup\Gamma_h$
has a unique realization as a sublattice of the standard
lattice $(\mathbb{Z}^N,I)$ and that $N\geq m-k+h>m$.
\end{proof}

Following~\cite{Lisca-ribbon}, we use the notation $[\dots,2^{[h]},\dots]^-$ to denote a length-$h$ string of 2 in a continued fraction expansion.
\begin{lemma}\label{confrac}
 For each $q\geq 2$ and $k\geq 1$ we have
\begin{enumerate}
 \item $\frac{kq+1}{kq+1-k}=[2^{[q-1]},k+1]^-$
 \item $\frac{kq^2+q+1}{q^2}=[k+1,2^{[q-2]},2+q]^-$
 \item $\frac{kq^2+q-1}{q^2}=[k+1,2^{[q]},q]^-$
 \item $\frac{q^2}{q-1}=[q+2,2^{[q-2]}]^-$
 \item $\frac{q^2}{q+1}=[q,2^{[q]}]^-$
 \end{enumerate}
\end{lemma}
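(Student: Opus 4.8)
The final statement is \cref{confrac}, a list of five continued fraction identities. These are purely computational claims about negative continued fraction expansions, so the plan is essentially to verify each one, ideally by an induction that handles the repeated-$2$ strings uniformly rather than by five independent \emph{ad hoc} computations.

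\medskip

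\textbf{Plan.} The basic tool is the elementary fact that for $a_1\ge 2$ and a string of $h$ twos one has the contraction identity
\[
[2^{[h]}]^- = \frac{h+1}{h},
\qquad
[a_1,2^{[h]}]^- = a_1 - \frac{h}{h+1} = \frac{(a_1-1)(h+1)+1}{h+1},
\]
and, more generally, that prepending $a_1$ to a continued fraction $p/q = [a_2,\dots,a_n]^-$ gives $[a_1,a_2,\dots,a_n]^- = a_1 - q/p = (a_1 p - q)/p$. Equivalently, appending is governed by the same rule read from the other end, since negative continued fractions are palindromic-friendly under the matrix formalism: associating to $[a_1,\dots,a_n]^-$ the product $\prod \begin{pmatrix} a_i & -1\\ 1 & 0\end{pmatrix}$, the numerator/denominator are read off the first column. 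First I would record these one-line facts, then treat the five items.

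\medskip

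For (4) and (5), which are the ``base'' identities, I would just apply the appending rule: starting from $[2^{[q-2]}]^- = (q-1)/(q-2)$ and prepending $q+2$ gives $(q+2) - (q-2)/(q-1) = ((q+2)(q-1)-(q-2))/(q-1) = q^2/(q-1)$, giving (4); similarly $[2^{[q]}]^- = (q+1)/q$ and prepending $q$ gives $q - q/(q+1) = q^2/(q+1)$, giving (5). For (1), prepending $k+1$ to $[2^{[q-1]}]^- = q/(q-1)$ gives $(k+1) - (q-1)/q = ((k+1)q-(q-1))/q = (kq+1)/q$ — but the claim reads $(kq+1)/(kq+1-k)$, so I would reconcile this by noting $[2^{[q-1]},k+1]^-$ reverses the roles: appending $k+1$ at the \emph{end} of $[2^{[q-1]}]^-$, not prepending. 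Using the matrix product $\left(\prod_{1}^{q-1}\begin{pmatrix}2&-1\\1&0\end{pmatrix}\right)\begin{pmatrix}k+1&-1\\1&0\end{pmatrix}$ and the identity $\begin{pmatrix}2&-1\\1&0\end{pmatrix}^{q-1} = \begin{pmatrix} q & -(q-1)\\ q-1 & -(q-2)\end{pmatrix}$ (an easy induction), one reads off numerator $q(k+1)-(q-1) = kq+1$ and denominator $(q-1)(k+1)-(q-2) = kq+1-k$, proving (1). Items (2) and (3) are then obtained by prepending $k+1$ to the continued fractions $[2^{[q-2]},2+q]^-$ and $[2^{[q]},q]^-$ respectively; by the appending computation just done (with the roles of the two ends swapped, which is legitimate since reading the matrix product from the opposite column only transposes), $[2^{[q-2]},q+2]^- = (q^2+q+1)/q^2$ and $[2^{[q]},q]^- = (q^2+q-1)/q^2$, and prepending $k+1$ multiplies the numerator by $(k+1)$ and subtracts the denominator: $(k+1)(q^2+q+1) - q^2 = kq^2 + kq + k + q + 1 $ — here I must be careful, the denominator of the inner fraction is $q^2$, so prepending $k+1$ gives $\big((k+1)(q^2+q+1)-q^2\big)/(q^2+q+1)$, which is not of the claimed form; so in fact (2) is the statement $\frac{kq^2+q+1}{q^2} = [k+1,2^{[q-2]},q+2]^-$, and the correct reading is that $[2^{[q-2]},2+q]^- = \frac{q^2}{q-1}\cdot(\text{something})$ — I would instead verify (2) directly by prepending $k+1$ to a fraction whose value I compute to be $(q^2+q+1)/(kq^2+q+1-\ast)$, i.e. I should compute $[2^{[q-2]},2+q]^-$ with \emph{its} denominator in mind. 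The cleanest route is: check that $[2^{[q-2]},q+2]^- = \frac{q^2+q+1}{q^2}$ is false and that the right inner value making (2) work is forced, i.e. run the matrix product once for general $q$ and match.

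\medskip

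\textbf{Main obstacle.} Honestly, there is no conceptual obstacle here — the statement is a bookkeeping lemma — but the bookkeeping is error-prone precisely because negative continued fractions do not ``append'' as symmetrically as one would like, and items (2) and (3) mix a prepended non-trivial coefficient $k+1$, a middle string of twos, and a trailing term, so one has to be disciplined about which end of the string the $2$'s are being contracted from. The robust way to avoid sign/index slips is to prove everything through the $2\times2$ matrix representation and the single closed form $\begin{pmatrix}2&-1\\1&0\end{pmatrix}^{h} = \begin{pmatrix} h+1 & -h\\ h & -(h-1)\end{pmatrix}$ (proved once, by induction on $h$), after which each of (1)--(5) is a one-line $2\times2$ matrix multiplication and a comparison of entries. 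I would organise the proof around that single induction and then dispatch the five items as corollaries.
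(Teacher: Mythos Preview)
Your approach is sound and the paper in fact omits the proof entirely, writing only ``These identities can easily be proved by induction. We omit the proof.'' So there is no detailed argument in the paper to compare against; any clean verification suffices.

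Your matrix route is the right one. The single induction you need is
\[
\begin{pmatrix}2&-1\\1&0\end{pmatrix}^{h} = \begin{pmatrix} h+1 & -h\\ h & -(h-1)\end{pmatrix},
\]
after which each identity is a one-line multiplication. You already carried out (1), (4), (5) correctly. For (2) and (3) you got tangled because you conflated prepending and appending; to save you the recomputation: one has
\[
[2^{[q-2]},q+2]^- = \frac{q^2}{q^2-q-1},\qquad
[2^{[q]},q]^- = \frac{q^2}{q^2-q+1},
\]
both read directly from the matrix product as in your treatment of (1). Prepending $k+1$ then gives
\[
(k+1)-\frac{q^2-q-1}{q^2}=\frac{kq^2+q+1}{q^2},\qquad
(k+1)-\frac{q^2-q+1}{q^2}=\frac{kq^2+q-1}{q^2},
\]
which are exactly (2) and (3). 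So your plan works once you stop second-guessing yourself: compute the inner tail via the matrix formula, then prepend. The paper's vague ``by induction'' is presumably the same calculation phrased recursively rather than through matrices; your version is arguably cleaner.
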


These identities can easily be proved by induction. We omit the proof.

Now we are ready to prove \cref{t:kq+1}.
\begin{proof}[Proof of \cref{t:kq+1}]
We start by dealing with two special cases, namely $q=2$ and $k=1$.

First assume that $q=2$. In this case $T_{p,q}$ is alternating and therefore we can apply \cref{p:alternating}. If $S^3_{n}(T_{p,q})$ bounds a $\mathbb{Q}HB^4$ then $n\leq 25$.
Moreover, since $p-1=2\nu^+(T_{p,2})=-\sigma(T_{p,2}) = -\sigma<0$ we have:
  
\begin{itemize}
\item if $n=1$ then $\sigma\ge0$ and we can ignore this possibility because $T_{p,2}$ is a positive torus knot;
\item if $n=4$ then $\sigma = -2$, that is $p=3$ and $S^3_4(T_{3,2})=Y(2;\frac{3}{2},2,2)$ bounds a $\mathbb{Q}HB^4$ by \cref{join};
\item if $n=9$ then $p\in\{3,5\}$.
The $3$--manifold $S^3_9(T_{3,2})=Y(2;\frac{3}{2},2,\frac{3}{2})$ bounds a $\mathbb{Q}HB^4$ by \cite{55letters} while $S^3_9(T_{5,2})=-L(9,4)$ bounds a $\mathbb{Q}HB^4$ by \cite{Lisca-ribbon};
\item if $n=16$ then $p\in\{7,9\}$. By \cref{join} both $S^3_{16}(T_{7,2})$ and $S^3_{16}(T_{9,2})$ are rational homology cobordant to $-L(4,1)$ and therefore each bounds a rational homology ball;
\item if $n=25$ then $p=13$ and $S^3_{25}(T_{13,2})=-L(25,4)$ which bounds a rational homology ball by \cite{Lisca-ribbon}.
\end{itemize}

Now assume that $k=1$. It is easy to see, via \cref{Ow}, that 
 \[
 S^3_n(T_{q+1,q})=Y\left(2;\frac{q+1}{q},q,\frac{q^2+q-n}{q^2+q-n-1}\right).
 \]
If, in particular, $n=q^2$ we have $S^3_{q^2}(T_{q+1,q})=Y\left(2;\frac{q+1}{q},q,\frac{q}{q-1}\right)$.
By \cref{join} this manifold is $\Q H$-cobordant to $S^3$.
Since $m(T_{q+1,q})=q^2$, if $T_{q+1,q}$ has another integral surgery that bounds a $\mathbb{Q}HB^4$ then, by \cref{p:mvsnu1}, this manifold is $S^3_{(q+1)^2}(T_{q+1,q})$.
We have
\[
 S^3_{(q+1)^2}(T_{q+1,q})=Y\left(2;\frac{q+1}{q},q,\frac{q+1}{q+2}\right)=Y\left(1;\frac{q+1}{q},q,q+1\right)
\]
 which, again by \cref{join}, bounds a $\mathbb{Q}HB^4$.
 
 From now on we may assume that $k>1$ and $q>2$. Write $p=kq\pm 1$ and assume that $S^3_n(T_{p,q})$ bounds a $\mathbb{Q}HB^4$.
 We split the proof in two cases according to whether $p=kq+1$ or $p=kq-1$. Each case is further divided into subcases distinguishing the possible values for the surgery coefficient $n$.\\ 
\vskip 0.5 cm
 \textbf{First case}: $p=kq+1$.\\
 Note that $n\geq m(T_{kq+1,q})=kq^2$.
 We distinguish the following three cases
\begin{enumerate}
\item  $n>kq^2+q+1$;
 \item $kq^2\leq n<kq^2+q-1$;
 \item $n\in\{kq^2+q-1,kq^2+q,kq^2+q+1\}$.
\end{enumerate}

\vskip 0.5 cm

\textbf{First subcase}: $n>kq^2+q+1$.\\
   We claim that there are no candidate triples in this subcase. It is enough to show that:
 \begin{equation}\label{boh}
   n\leq kq^2+q+1.
 \end{equation}
 
Note that inequality
\eqref{boh} implies that there is at most one value of $n$ such that the corresponding surgered 
manifold bounds a $\mathbb{Q}HB^4$. To see this we need to show that the interval 
$[kq^2,kq^2+q+1]$ contains at most one square. Suppose $N^2$ is the smallest square in $[kq^2,kq^2+q+1]$. 
We have 
\[
kq^2\leq N^2\Rightarrow q\leq \frac{N}{\sqrt{k}}
\]
and therefore
\[
kq^2+q+1\leq N^2+\frac{N}{\sqrt{k}}+1<N^2+2N+1=(N+1)^2.
\]
 To prove \eqref{boh} we consider the positive canonical plumbing graph associated to $-S^3_n(T_{kq+1,q})$ with $n>kq^2+q+1=pq+1$, which is easily seen to be positive definite.
This plumbing graph is obtained by taking the dual of the graph described in \cref{r>pq+1}, and it has the form
\[
\xygraph{
!{<0cm,0cm>;<1cm,0cm>:<0cm,1cm>::}
!~-{@{-}@[|(2.5)]}
!{(0,1.5) }*+{\bullet}="x"
!{(1.5,3) }*+{\bullet}="a1"
!{(4.5,3) }*+{\bullet}="a2"
!{(1.5,0) }*+{\bullet}="c1"
!{(4.5,0) }*+{\bullet}="c2"
!{(1.5,1.5) }*+{\bullet}="b1"
!{(3,1.5) }*+{\bullet}="b2"
!{(6,1.5) }*+{\bullet}="b3"
!{(3,0) }*+{\dots}="cm"
!{(3,3) }*+{\dots}="am"
!{(4.5,1.5) }*+{\dots}="bm"
!{(4.5,2.5)}*+{\overbrace{\hphantom{--------}}^{k-1}}
!{(1.5,1.9) }*+{q+1}
!{(3,1.9) }*+{2}
!{(6,1.9) }*+{2}
!{(0,1.9) }*+{2}
!{(1.5,3.4) }*+{2}
!{(4.5,3.4) }*+{2}
!{(1.5,0.4) }*+{2}
!{(4.5,0.4) }*+{2}
!{(3,1) }*+{\overbrace{\hphantom{--------}}^{n-kq^2-q-1}}
!{(3,4) }*+{\overbrace{\hphantom{--------}}^{q-1}}
"x"-"c1"
"x"-"a1"
"x"-"b1"
"b1"-"b2"
"a1"-"am"
"b2"-"bm"
"c1"-"cm"
"a2"-"am"
"b3"-"bm"
"c2"-"cm"
}
\]

By removing the vertex whose weight is $q+1$ we obtain a disjoint union of two 2--chains. One of these 2--chains
has length 

\[
(q-1)+1+(n-kq^2-q-1)\geq 4
\]
since $q\geq 3$ and $n>kq^2+q+1$, and the other has length $k-1$. 
If $k\neq4$, it follows by \cref{2chain} that the above 
plumbing graph does not embed and therefore $S^3_n(T_{kq+1,q})$ does not bound a $\mathbb{Q}HB^4$.
If $k=4$ the plumbing graph has the form
\[
\xygraph{
!{<0cm,0cm>;<1cm,0cm>:<0cm,1cm>::}
!~-{@{-}@[|(2.5)]}
!{(0,1.5) }*+{\bullet}="x"
!{(1.5,3) }*+{\bullet}="a1"
!{(4.5,3) }*+{\bullet}="a2"
!{(1.5,0) }*+{\bullet}="c1"
!{(4.5,0) }*+{\bullet}="c2"
!{(1.5,1.5) }*+{\bullet}="b1"
!{(3,1.5) }*+{\bullet}="b2"
!{(6,1.5) }*+{\bullet}="b3"
!{(3,0) }*+{\dots}="cm"
!{(3,3) }*+{\dots}="am"
!{(4.5,1.5) }*+{\bullet}="bm"
!{(1.5,1.9) }*+{q+1}
!{(3,1.9) }*+{2}
!{(6,1.9) }*+{2}
!{(4.5,1.9) }*+{2}
!{(0,1.9) }*+{2}
!{(1.5,3.4) }*+{2}
!{(4.5,3.4) }*+{2}
!{(1.5,0.4) }*+{2}
!{(4.5,0.4) }*+{2}
!{(3,1) }*+{\overbrace{\hphantom{--------}}^{n-4q^2-q-1}}
!{(3,4) }*+{\overbrace{\hphantom{--------}}^{q-1}}
"x"-"c1"
"x"-"a1"
"x"-"b1"
"b1"-"b2"
"a1"-"am"
"b2"-"bm"
"c1"-"cm"
"a2"-"am"
"b3"-"bm"
"c2"-"cm"
}
\]

  Consider the following portion of the above plumbing graph

\[
\xygraph{
!{<0cm,0cm>;<1cm,0cm>:<0cm,1cm>::}
!~-{@{-}@[|(2.5)]}
!{(0,0) }*+{\bullet}="x"
!{(1.5,0) }*+{\bullet}="a1"
!{(3,0) }*+{\bullet}="a2"
!{(4.5,0) }*+{\bullet}="c1"
!{(0,0.4) }*+{q+1}
!{(1.5,0.4) }*+{2}
!{(3,0.4) }*+{2}
!{(4.5,0.4) }*+{2}
"x"-"a1"
"a2"-"a1"
"a2"-"c1"
}
\]
There are two choices for the embedding of the 2--chain. One choice requires four basis vectors
and can be excluded (as in the proof of \cref{2chain}). The other choice gives us 
an embedding of the 2--chain of the form
\[
e_1+e_2,e_2+e_3,e_2-e_1.
\]
It is easy to see that there is no vector $v$ such that $v\cdot(e_1+e_2)=1$ and $v\cdot(e_2-e_1)=0$.
This shows that $n\leq kq^2+q+1$ and inequality \eqref{boh} follows.

\vskip 0.5 cm
\textbf{Second subcase}: $kq^2\leq n<kq^2+q-1$.\\
Suppose that $k=4$. We have
\[
S^3_{4q^2}(T_{4q+1,q})=Y\left(2;\frac{4q+1}{4q-3},q,\frac{q}{q-1}\right)
\]
By \cref{join}, applied to the two exceptional fibres with invariants $q$ and $\frac{q}{q-1}$, this manifold is $\mathbb{Q}H$--cobordant to a lens space obtained 
via the following plumbing graph
\[
\xygraph{
!{<0cm,0cm>;<1cm,0cm>:<0cm,1cm>::}
!~-{@{-}@[|(2.5)]}
!{(0,0) }*+{\bullet}="x"
!{(1.5,0) }*+{\bullet}="a1"
!{(3,0) }*+{\dots}="a2"
!{(4.5,0) }*+{\bullet}="c1"
!{(0,0.4) }*+{1}
!{(1.5,0.4) }*+{a_1}
!{(4.5,0.4) }*+{a_n}
"x"-"a1"
"a2"-"a1"
"a2"-"c1"
}
\]
where $[a_1,\dots,a_n]^-=\frac{4q+1}{4q-3}$. It is easy to see that this lens space is just 
$-L(4,1)$. It follows that $S^3_{4q^2}(T_{4q+1,q})$ bounds a $\mathbb{Q}HB^4$ and, by~\eqref{boh} this is the only
integral surgery on $T_{4q+1,q}$ with this property.

Recall from \cref{t:pvs9q} that $k<9$.
Now we will examine each possible value of $k$ in the set $\{2,3,5,6,7,8\}$.
We have  
\[
S^3_n(T_{kq+1,q})=Y\left(2;\frac{kq+1}{kq+1-k},q,\frac{kq^2+q-n}{kq^2+q-n-1}\right)
\]
and all the Seifert
invariants are strictly greater than 1. It follows that the positive canonical plumbing graph
for $S^3_n(T_{kq+1,q})$ has the form

\[
\xygraph{
!{<0cm,0cm>;<1cm,0cm>:<0cm,1cm>::}
!~-{@{-}@[|(2.5)]}
!{(0,1.5) }*+{\bullet}="x"
!{(1.5,3) }*+{\bullet}="a1"
!{(4.5,3) }*+{\bullet}="a2"
!{(6,3) }*+{\bullet}="a3"
!{(1.5,0) }*+{\bullet}="c1"
!{(4.5,0) }*+{\bullet}="c2"
!{(1.5,1.5) }*+{\bullet}="b1"
!{(3,0) }*+{\dots}="cm"
!{(3,3) }*+{\dots}="am"
!{(1.5,1.9) }*+{q}
!{(6,3.4) }*+{k+1}
!{(0,1.9) }*+{2}
!{(1.5,3.4) }*+{2}
!{(4.5,3.4) }*+{2}
!{(1.5,0.4) }*+{2}
!{(4.5,0.4) }*+{2}
!{(3,1) }*+{\overbrace{\hphantom{--------}}^{kq^2+q-n-1}}
!{(3,4) }*+{\overbrace{\hphantom{--------}}^{q-1}}
"x"-"c1"
"x"-"a1"
"x"-"b1"
"a1"-"am"
"c1"-"cm"
"a2"-"am"
"a2"-"a3"
"c2"-"cm"
}
\]
where we have used \cref{confrac}, i.e. that $\frac{kq+1}{kq-1+k}=[2^{[q-1]},k+1]^-$.
In order to emphasise the 2--chain in this plumbing graph we rewrite it as

\[
\xygraph{
!{<0cm,0cm>;<1cm,0cm>:<0cm,1cm>::}
!~-{@{-}@[|(2.5)]}
!{(4.5,1.5) }*+{\bullet}="x"
!{(0,1.5) }*+{\bullet}="a1"
!{(3,1.5) }*+{\bullet}="a2"
!{(6,1.5) }*+{\bullet}="c1"
!{(7.5,1.5) }*+{\dots}="cm"
!{(9,1.5) }*+{\bullet}="c2"
!{(10.5,1.5) }*+{\bullet}="c3"
!{(1.5,1.5) }*+{\dots}="am"
!{(4.5,0) }*+{\bullet}="b1"
!{(4.5,-0.4) }*+{q}
!{(6,1.9) }*+{2}
!{(0,1.9) }*+{2}
!{(3,1.9) }*+{2}
!{(4.5,1.9) }*+{2}
!{(6,1.9) }*+{2}
!{(9,1.9) }*+{2}
!{(10.5,1.9) }*+{k+1}
!{(7.5,2.5) }*+{\overbrace{\hphantom{--------}}^{q-1}}
!{(1.5,2.5) }*+{\overbrace{\hphantom{--------}}^{kq^2+q-n-1}}
"x"-"c1"
"x"-"a2"
"x"-"b1"
"a1"-"am"
"c1"-"cm"
"a2"-"am"
"c2"-"cm"
"c2"-"c3"
}
\]
Write $N=kq^2+2q-n-1$ for the length of the 2--chain. Note that $N\geq 4$.
The graph has $N+2$ vertices. 
If the corresponding integral lattice embeds in $(\mathbb{Z}^{N+2},I)$ then we can write
the vectors corresponding to the 2--chain as
\[
e_1+e_2,\dots,e_{kq^2+q-n-1}+e_{kq^2+q-n},e_{kq^2+q-n}+e_{kq^2+q-n+1},\dots,e_N+e_{N+1}.
\]
Let us call $v$ the vector whose weight is $k+1$. It can be written in one of the two following ways:
\begin{itemize}
\item $v=\alpha e_{N+2}+\beta e_{N+1}+\gamma \sum_{i=1}^{N}(-1)^{i}e_i$, with $\gamma\neq 0$;
\item $v=e_{N+1}+\sqrt{k}e_{N+2}$.
\end{itemize}
The second equality implies that $k$ is a square and can be excluded (since we already dealt with 
the values $1$ and $4$ and we know that $k<9$). Therefore the first equation holds. 
Since $v\cdot v=k+1$ we have
\[
\alpha^2+\beta^2+N\gamma^2=k+1\leq 9.
\]
Since $v\cdot (e_N+e_{N+1})=1$ we see that $\beta+\gamma=1$. Then, we may write
$k+1=\alpha^2+\beta^2+N(1-\beta)^2$. Moreover, since $N\geq 4$, we obtain
\[
\alpha^2+\beta^2+4(1-\beta)^2\leq \alpha^2+\beta^2+N(1-\beta)^2\leq 9
\]
from which we get $(1-\beta)^2\leq 1$. Since $\gamma\neq 0$ and  $\beta+\gamma=1$ we see that $\beta\neq 1$.
We conclude that $(1-\beta)^2= 1$, i.e. $\beta\in\{0,2\}$. 

If $\beta=2$ we have 
\[
8\leq \alpha^2+4+N\leq 9
\]
Which implies $(\alpha,k)\in\{(0,7),(0,8),(\pm 1,8)\}$. Note that $N\ge q+1$.
In the first case we quickly obtain 
$(p,q;n)=(22,3;64)$ and by \cite{55letters} the corresponding surgered manifold bounds a $\mathbb{Q}HB^4$.
In the second case we have $n\in\{72,126\}$, and thus it is not a square.
In the third case we get $n=73$ which is not a square. 

If $\beta=0$ we have
\[
\alpha^2+4\leq \alpha^2+N\leq 9
\]
In particular $|\alpha|\leq 2$. If $\alpha=0$ we have $N=k+1$ and $n=kq^2+2q-k-2$. Notice that since $N\ge q+1$ and $k < 9$ there are only finitely many possible values of $q$.

By listing all possibilities, we obtain the set of all candidate triples $(p,q;n)$. The only one for which $n$ is a square is $(21,4;64)$. The corresponding definite plumbing graph is positive definite:
\[
\xygraph{
!{<0cm,0cm>;<1cm,0cm>:<0cm,1cm>::}
!~-{@{-}@[|(2.5)]}
!{(0,1.5) }*+{\bullet}="x"
!{(1.5,3) }*+{\bullet}="a1"
!{(4.5,3) }*+{\bullet}="a2"
!{(6,3) }*+{\bullet}="a3"
!{(1.5,0) }*+{\bullet}="c1"
!{(4.5,0) }*+{\bullet}="c2"
!{(1.5,1.5) }*+{\bullet}="b1"
!{(3,0) }*+{\dots}="cm"
!{(3,3) }*+{\bullet}="am"
!{(1.5,1.9) }*+{4}
!{(3,3.4) }*+{2}
!{(6,3.4) }*+{6}
!{(0,1.9) }*+{2}
!{(1.5,3.4) }*+{2}
!{(4.5,3.4) }*+{2}
!{(1.5,0.4) }*+{2}
!{(4.5,0.4) }*+{2}
!{(3,1) }*+{\overbrace{\hphantom{--------}}^{19}}
"x"-"c1"
"x"-"a1"
"x"-"b1"
"a1"-"am"
"c1"-"cm"
"a2"-"am"
"a2"-"a3"
"c2"-"cm"
}
\]
And the corresponding integral lattice does not embed, by direct inspection.

If $|\alpha|=1$ we have $4\leq N=k\leq 9$. Just like in the previous case we can list all the candidate triples
and discard the ones whose surgery coefficient is not a square. We are left with $(43,6;256)$ which by \cite{55letters} 
corresponds to a manifold which bounds a $\mathbb{Q}HB^4$.

If $|\alpha|=2$ we have $N\in\{4,5\}$ and $k=N+3$. In this case the only triple we get is $(22,3;64)$, which, as already noted,
bounds a $\mathbb{Q}HB^4$.
\vskip 0.5 cm
\textbf{Third subcase}: $n\in\{kq^2+q-1,kq^2+q,kq^2+q+1\}$.\\
Suppose $n=pq=kq^2+q$, then we have
\[
S^3_{kq^2+q}(T_{kq+1,q})\cong -L(kq+1,q)\# -L(q,kq+1)\cong -L(kq+1,q)\# -L(q,1).
\]
It follows from \cite{Lisca-ribbon} that this manifold bounds a rational homology ball if and only if $q=4$ 
and $k\in\{2,6\}$. These
values correspond to the triples $(25,4;100)$ and $(9,4;36)$. 
Recall that
$S^3_{pq\pm 1}(T_{p,q})=-L(pq\pm1,q^2)$. 
It follows that when $n=pq\pm1=kq^2+q\pm1$ we have 
\[
S^3_{kq^2+q\pm1}(T_{kq+1,q})=-L(kq^2+q\pm 1,q^2).
\]
According to \cite{Lisca-ribbon} if this lens space bounds a $\mathbb{Q}HB^4$ then $-3\leq I(\frac{kq^2+q + 1}{q^2})\leq 1$. It follows from \cref{confrac} that $I(\frac{kq^2+q+1}{q^2})=k-1$, and therefore we may assume that $k=2$. Again by \cref{confrac}
the positive plumbing graph associated to $L(2q^2+q+1,q^2)$ can be depicted as  

\[
\xygraph{
!{<0cm,0cm>;<1cm,0cm>:<0cm,1cm>::}
!~-{@{-}@[|(2.5)]}
!{(0,0) }*+{\bullet}="a1"
!{(1.5,0) }*+{\bullet}="a2"
!{(3,0) }*+{\dots}="a3"
!{(4.5,0) }*+{\bullet}="a4"
!{(6,0) }*+{\bullet}="a5"
!{(0,0.4) }*+{3}
!{(1.5,0.4) }*+{2}
!{(4.5,0.4) }*+{2}
!{(6,0.4) }*+{2+q}
!{(3,1) }*+{\overbrace{\hphantom{--------}}^{q-2}}
"a1"-"a2"
"a3"-"a2"
"a3"-"a4"
"a5"-"a4"
}
\]
It is easy to check that the corresponding integral lattice does not embed
(start with the $2$--chain and then examine the vector whose square is $3$).
In order to conclude we need to 
the study the family of lens spaces $L(kq^2+q-1,q^2)$. By \cref{confrac} the positive plumbing graph corresponding
to $-L(kq^2+q-1,q^2)$ can be depicted as 
\[
\xygraph{
!{<0cm,0cm>;<1cm,0cm>:<0cm,1cm>::}
!~-{@{-}@[|(2.5)]}
!{(0,0) }*+{\bullet}="a1"
!{(1.5,0) }*+{\bullet}="a2"
!{(3,0) }*+{\dots}="a3"
!{(4.5,0) }*+{\bullet}="a4"
!{(6,0) }*+{\bullet}="a5"
!{(0,0.4) }*+{k+1}
!{(1.5,0.4) }*+{2}
!{(4.5,0.4) }*+{2}
!{(6,0.4) }*+{q}
!{(3,1) }*+{\overbrace{\hphantom{--------}}^{q}}
"a1"-"a2"
"a3"-"a2"
"a3"-"a4"
"a5"-"a4"
}
\]
The dual of this linear graph is
\[
\xygraph{
!{<0cm,0cm>;<1cm,0cm>:<0cm,1cm>::}
!~-{@{-}@[|(2.5)]}
!{(0,0) }*+{\bullet}="a1"
!{(1.5,0) }*+{\dots}="a2"
!{(3,0) }*+{\bullet}="a3"
!{(4.5,0) }*+{\bullet}="a4"
!{(6,0) }*+{\bullet}="a5"
!{(7.5,0) }*+{\dots}="a6"
!{(9,0) }*+{\bullet}="a7"
!{(0,0.4) }*+{2}
!{(3,0.4) }*+{2}
!{(4.5,0.4) }*+{q+3}
!{(6,0.4) }*+{2}
!{(9,0.4) }*+{2}
!{(1.5,1) }*+{\overbrace{\hphantom{--------}}^{k-1}}
!{(7.5,1) }*+{\overbrace{\hphantom{--------}}^{q-2}}
"a1"-"a2"
"a3"-"a2"
"a3"-"a4"
"a5"-"a4"
"a5"-"a6"
"a7"-"a6"
}
\]
Recall that we may assume $k\notin\{1,4\}$ and $q>2$. If $(k,q)\notin\{(2,3),(2,5)\}$ the integral lattice which corresponds to the above graph does not embed by \cref{2chain}.

If $(k,q)=(2,3)$ the surgery coefficient is $20$, and if $(k,q)=(2,5)$ it is $54$; in neither case is it a square, hence the corresponding 3--manifolds do not bound.

\subsection*{Second case: $p=kq-1$}
By \cref{t:pvs9q} we may assume that $k\leq 9$.
Note that $p^*=q-1$ and $q^*=k$. We obtain 
\[
S^3_n(T_{kq-1,q})=Y\left(2;\frac{kq-1}{k},\frac{q}{q-1},\frac{kq^2-q-n}{kq^2-q-n-1}\right).
\]
Just like in the first case we split the proof according to the following possibilities:
\begin{enumerate}
 \item $n>pq+1=kq^2-q+1$;
 \item $n<pq-1=kq^2-q-1$;
  \item $n\in\{kq^2-q-1,kq^2-q,kq^2-q+1\}$.
\end{enumerate}
\vskip 0.5 cm
\textbf{First subcase}: $n>pq+1=kq^2-q+1$.\\
The positive canonical
plumbing graph for $S^3_n(T_{kq-1,q})$, described in \cref{r>pq+1}, is indefinite.
By taking the dual of this graph, i.e. by considering the positive canonical plumbing graph of $-S^3_n(T_{kq-1,q})$, we obtain a positive definite plumbing graph. 
It can be written as

\[
\xygraph{
!{<0cm,0cm>;<1cm,0cm>:<0cm,1cm>::}
!~-{@{-}@[|(2.5)]}
!{(0,1.5) }*+{\bullet}="x"
!{(1.5,3) }*+{\bullet}="a1"
!{(3,3) }*+{\dots}="am"
!{(4.5,3) }*+{\bullet}="a2"
!{(6,3) }*+{\bullet}="a3"
!{(7.5,3) }*+{\bullet}="a4"
!{(9,3) }*+{\dots}="am2"
!{(10.5,3) }*+{\bullet}="a5"
!{(1.5,0) }*+{\bullet}="c1"
!{(3,0) }*+{\dots}="cm"
!{(4.5,0) }*+{\bullet}="c2"
!{(3,3) }*+{\dots}="am"
!{(1.5,1.5) }*+{\bullet}="b1"
!{(1.5,1.9) }*+{q}
!{(1.5,3.4) }*+{2}
!{(4.5,3.4) }*+{2}
!{(6,3.4) }*+{3}
!{(7.5,3.4) }*+{2}
!{(10.5,3.4) }*+{2}
!{(1.5,0.4) }*+{2}
!{(0,1.9) }*+{2}
!{(4.5,0.4) }*+{2}
!{(3,1) }*+{\overbrace{\hphantom{--------}}^{n-kq^2+q-1}}
!{(3,4) }*+{\overbrace{\hphantom{--------}}^{q-2}}
!{(9,4) }*+{\overbrace{\hphantom{--------}}^{k-2}}
"x"-"c1"
"x"-"a1"
"x"-"b1"
"a1"-"am"
"c1"-"cm"
"a2"-"am"
"c2"-"cm"
"a2"-"a3"
"a3"-"a4"
"a4"-"am2"
"a5"-"am2"
}
\]
This graph has two $2$--chains: the first has length $N_1=(n-kq^2+q-1) + 1 + (q-2)$, which is at least 3 since all summands are positive; the second has length $N_2=k-2$.

Observe that $N_1=3$ only if $q=3$ and $n=pq+2$, in which case $n$ can never be a square since 2 is not a quadratic residue mod 3. We can then assume that $N_1>3$, and that the vectors in the corresponding chain are
  \[
  e_1+e_2,\dots,e_{n-kq^2+q}+e_{n-kq^2+q+1},\dots,e_{N_1}+e_{N_1+1}
  \]
  where  $e_{n-kq^2+q}+e_{n-kq^2+q+1}$ corresponds to the central vertex of the plumbing graph.
As in the proof of \cref{2chain}, the second $2$--chain can be written as
  \[
  e_{N_1+2}+e_{N_1+3},\dots,e_{N_1+N_2+1}+e_{N_1+N_2+2}
  \]
  since it is easy to exclude the exceptional embedding $\{e_1+e_2,e_1+e_3,e_1-e_2\}$ when $N_2=3$. 
  Note that in the above expressions for the $2$--chains we already used all the available basis vectors.
  Call $v$ the vector whose square is $3$. Clearly $v$ must hit at least two basis vectors which appear in the same 2--chain.
  If $v$ hits more than one basis vector appearing in a 2--chain then it hits all the basis vectors in that chain. It follows that $N_2=2$, i.e. $k=4$.
  
Call $w$ the vector whose square is $q$. The top portion of the $2$--chain linked to $w$ involves $q-1$ basis vectors. It follows that $w$ must hit some basis vector in the bottom portion of $2$--chain, and therefore all of them.
This implies that $w$ does not hit any basis vector in the top portion of the two chain (we would have $w\cdot w>q$). Therefore $n-kq^2+q=q$, i.e. $n=4q^2$.
  The surgered manifold is 
  \[
  Y\left(2;\frac{4q-1}{4q-5},q,\frac{q}{q-1}\right).
  \]
  By \cref{join} these manifolds are rational homology cobordant to $L(4,1)$, and therefore to $S^3$. 
  They correspond to the triples
  $(4q-1,q;4q^2)$.
  \vskip 0.5 cm

\textbf{Second subcase}: $n<pq-1$.\\
The positive canonical plumbing graph of $S^3_n(T_{kq-1,q})$ may be written as
\[
\xygraph{
!{<0cm,0cm>;<1cm,0cm>:<0cm,1cm>::}
!~-{@{-}@[|(2.5)]}
!{(0,1.5) }*+{\bullet}="x"
!{(1.5,3) }*+{\bullet}="a1"
!{(3,3) }*+{\bullet}="a2"
!{(1.5,0) }*+{\bullet}="c1"
!{(4.5,0) }*+{\bullet}="c2"
!{(1.5,1.5) }*+{\bullet}="b1"
!{(3,0) }*+{\dots}="cm"
!{(3,3) }*+{\bullet}="am"
!{(3,1.5) }*+{\dots}="bm"
!{(4.5,1.5) }*+{\bullet}="b2"
!{(1.5,1.9) }*+{2}
!{(4.5,1.9) }*+{2}
!{(1.5,3.4) }*+{q}
!{(3,3.4) }*+{k}
!{(0,1.9) }*+{2}
!{(1.5,0.4) }*+{2}
!{(4.5,0.4) }*+{2}
!{(3,1) }*+{\overbrace{\hphantom{--------}}^{kq^2-q-n-1}}
!{(3,2.5) }*+{\overbrace{\hphantom{--------}}^{q-1}}
"x"-"c1"
"x"-"a1"
"x"-"b1"
"a1"-"am"
"c1"-"cm"
"a2"-"am"
"b1"-"bm"
"b2"-"bm"
"c2"-"cm"
}
\]
This graph is positive definite.

If $k=2$ this graph does not embed, by \cref{2chain};
in fact, since $q\geq 3$, the above graph contains a $2$--chain whose 
length is at least $4$. As usual the vectors belonging
to this $2$--chain may be written as
\[
e_1+e_2,\dots,e_q+e_{q+1},\dots,e_{kq^2-n-1}+e_{kq^2-n},
\]
where $e_q+e_{q+1}$ corresponds to the central vertex of the graph. Note that we can only use one more
basis vector to write down the image of the vectors with weights $q$ and $k$. 
Let us denote these vectors as $v_1$ and $v_2$. Let us also put $N=kq^2-q-n$. 

If $k=3$ we must have $v_2\cdot e_i=0$ for each $i\leq N$.
It follows that $v_2=\alpha e_{N+1}$, which is impossible because $v_2\cdot v_2 = k = 3$ is not a square.

If $k=5$ either we may apply the same argument as above or the length of the $2$--chain is $4$. In this last case we 
quickly obtain $n=41$ which is not a square.

Similarly if $k=6$ we have two possibilities according to whether the length
of the $2$--chain is $4$ or $5$. If this length is $4$ then $q=3$ and $n=49$. These values correspond to the triple 
$(17,3;49)$. By \cite{55letters} the corresponding surgered manifold bounds a $\mathbb{Q}HB^4$. If the length of the $2$--chain
is $5$ then either $q=3$ and $n=48$ or $q=4$ and $n=94$ both cases can be excluded since $n$ is not a square.
We keep arguing in the same way. For a fixed value of $k$ (which is not a square) we consider the possible lengths of 
$2$--chain. Each length gives several possible pairs for $(q,n)$. We list the values obtained in this way below. Let
$l$ be the length of the $2$--chain.
\begin{itemize}
 \item if $q=3$ then $56\le n=62-l \le 58$;
 \item if $q=4$ then $105 \le n = 111-l \le 106$;
 \item if $q=5$ then $n = 174-l = 168$;
\end{itemize}
All these cases can be excluded because $n$ is not a square.

If $k=8$ we proceed exactly in the same way. 
In this case there are some candidate triples, but they can all be excluded by looking at the vector $v_1$ whose weight is $q$. We omit the details. Now suppose that $k\in\{4,9\}$.
In both cases we can exclude the expression $v_2=\sqrt{k}e_i$ for some $i\leq N+1$ (because $v_1\cdot v_2=1$).
Therefore we may proceed as we did for the other values of $k$.

Suppose $k=4$. The vector $v_2$ must hit some basis vector which appears in the expression of the $2$--chain, but then it must hit all of them. 
Since the $2$--chain has length at least $4$ this is impossible (because $v_2\cdot v_2=4$).

Suppose now that $k=9$. Just like above
we conclude that the length of the $2$--chain is at most $8$, this means that $4\leq 9q^2-n-1\leq 8$. Moreover
there is a portion of the $2$--chain whose length is $q-1$ which implies that $3\leq q\leq 7$. These two inequalities are
enough to spell out all possible pairs $(q,n)$ as indicated below.
\begin{itemize}
 \item if $q=3$ then $75\le n=80-l \le 79$;
 \item if $q=4$ then $139 \le n = 143-l \le 142$;
 \item if $q=5$ then $221 \le n = 224-l \le 223$;
 \item if $q=6$ then $321 \le n = 323-l \le 322$;
 \item if $q=7$ then $n=340-l=339$.
\end{itemize}
In no case is $n$ a square, and this concludes this subcase.
\vskip 0.5 cm
\textbf{Third subcase}: $n\in\{kq^2-q-1,kq^2-q,kq^2-q+1\}$.\\
 If $n=pq=kq^2-q$ we have 
\[
-S^3_n(T_{kq-1,q})=L(kq-1,q)\# L(q,kq-1)=L(kq-1,q)\# L(q,q-1).
\]
It follows from \cite{Lisca-ribbon} that if this manifold bounds a $\mathbb{Q}HB^4$ then $q=4$. In particular
both summands in the above expression must bound a $\mathbb{Q}HB^4$. The first summand is $L(4k-1,4)$. 
It does not bound because $4k-1$ is not square
. If $n=kq^2-q\pm 1$ we have
\[
-S^3_n(T_{kq-1,q})=L(kq^2-q\pm 1,q^2).
\]
We start with $L(kq^2-q+1,q^2)$. Using \cref{confrac} we obtain
\[
\frac{kq^2-q+1}{q^2}=k-\frac{1}{\frac{q^2}{q-1}}=[k,q+2,2^{[q-2]}]^-.
\]
It follows that $I(\frac{kq^2-q+1}{q^2})=k-2$. If $L(kq^2-q+1,q)$ bounds a $\mathbb{Q}HB^4$ then
$-3\leq I(\frac{kq^2-q+1}{q^2})\leq 1$ which implies $2\leq k\leq 3$. The linear plumbing graph of $-L(kq^2-q+1,q^2)$
can be depicted as

\[
\xygraph{
!{<0cm,0cm>;<1cm,0cm>:<0cm,1cm>::}
!~-{@{-}@[|(2.5)]}
!{(0,0) }*+{\bullet}="x"
!{(1.5,0) }*+{\bullet}="a1"
!{(3,0) }*+{\bullet}="a2"
!{(4.5,0) }*+{\dots}="c1"
!{(6,0) }*+{\bullet}="c2"
!{(0,0.4) }*+{k}
!{(1.5,0.4) }*+{q+2}
!{(3,0.4) }*+{2}
!{(6,0.4) }*+{2}
!{(4.5,1) }*+{\overbrace{\hphantom{--------}}^{q-2}}
"x"-"a1"
"a2"-"a1"
"a2"-"c1"
"c1"-"c2"
}.
\]
If $k=2$ and $q>3$, by 
\cref{2chain} this graph does not embed. If $k=2$ and $q=3$ the corresponding surgered manifold is $L(16,9)$
which, by \cite{Lisca-ribbon} bounds a $\mathbb{Q}HB^4$. The corresponding triple for this surgery is $(5,3;16)$.
Now we 
may assume that $k=3$. It can be checked directly that in this case the graph does not embed; start with the $2$--chain
and the vertex whose weight is 3.

Now we look at $L(kq^2-q-1,q^2)$. Using \cref{confrac} we obtain
\[
\frac{kq^2-q-1}{q^2}=k-\frac{1}{\frac{q^2}{q+1}}=[k,q,2^{[q]}]^-.
\]
It follows that $I(\frac{kq^2-q-1}{q^2})=k-6$, but since $-3\leq I(\frac{kq^2-q-1}{q^2})\leq 1$ we have $3\le k\le 7$. 
The linear plumbing graph of $-L(kq^2-q+1,q^2)$
can be depicted as

\[
\xygraph{
!{<0cm,0cm>;<1cm,0cm>:<0cm,1cm>::}
!~-{@{-}@[|(2.5)]}
!{(0,0) }*+{\bullet}="x"
!{(1.5,0) }*+{\bullet}="a1"
!{(3,0) }*+{\bullet}="a2"
!{(4.5,0) }*+{\dots}="c1"
!{(6,0) }*+{\bullet}="c2"
!{(0,0.4) }*+{k}
!{(1.5,0.4) }*+{q}
!{(3,0.4) }*+{2}
!{(6,0.4) }*+{2}
!{(4.5,1) }*+{\overbrace{\hphantom{--------}}^{q}}
"x"-"a1"
"a2"-"a1"
"a2"-"c1"
"c1"-"c2"
}
\]
As usual, the $2$--chain can be written as
$$
(e_1+e_2,e_2+e_3,\dots,e_q+e_{q+1}).
$$
Call $v$ the vector whose weight is $q$. It is easy to see that $v\cdot e_i=0$ if $2\leq i\leq q+1$ and therefore we may write $v=e_1+\sqrt{q-1} e_{q+2}$. Call $w$ the vector whose weight is $k$.
We can write 
$$
w=\gamma e_{q+2}+\beta\sum_{i=1}^{q+1}(-1)^i e_i.
$$
Since $w\cdot w=k$ and $w\cdot v=1$, we have $(q+1)^2\beta^2+\gamma^2=k$ and $\beta+\gamma\sqrt{q-1}=1$. From which we obtain
$$
(q+1)^2\beta^2+\frac{(1-\beta)^2}{q-1}=k.
$$
Since $q\geq 3$, $\beta\neq 0$ and $k\leq 7$ we get a contradiction and the proof is complete.
\end{proof}

\bibliographystyle{amsplain}
\bibliography{surgeries_and_QHB}

\end{document}